\def\XXint#1#2#3{{\setbox0=\hbox{$#1{#2#3}{\int}$ }
\vcenter{\hbox{$#2#3$ }}\kern-.6\wd0}}
\newcommand*{\rom}[1]{\expandafter\@slowromancap\romannumeral #1@}
\newcommand{\ind}{\protect\raisebox{2pt}{$\chi$}}
\newcommand{\disp}{\operatorname{Error}}
\newcommand{\prim}{\mathrm{prim}}
\newcommand{\SL}{\mathrm{SL}}
\newcommand{\sspan}{\operatorname{span}}
\newcommand{\GL}{\mathrm{GL}}
\newcommand{\M}{\mathrm{M}}
\newcommand{\X}{\mathcal{X}}
\newcommand{\XA}{\mathcal{X}^{\mathbb{A}}}
\newcommand{\Y}{\mathcal{Y}}
\newcommand{\E}{\mathcal{E}}
\newcommand{\R}{\mathbb{R}}
\newcommand{\Q}{\mathbb{Q}}
\newcommand{\e}{\varepsilon}
\newcommand{\one}{\mathbf{1}}
\newcommand{\A}{\mathbb{A}}
\newcommand{\Z}{\mathbb{Z}}
\newcommand{\hZ}{\widehat{\mathbb{Z}}}
\newcommand{\hZp}{\widehat{\mathbb{Z}}_{\prim}}
\newcommand{\Sphere}{\mathbb{S}}
\newcommand{\N}{\mathbb{N}}
\newcommand{\sed}{\mathcal{S}_{\epsilon}}
\newcommand{\ssed}{{\mathcal{S}^\sharp_{\epsilon}}}
\newcommand{\ued}{{\mathcal{U}_{\epsilon}}}
\newcommand{\ueda}{{\Tilde{\mathcal{U}}_{\epsilon}}}
\newcommand{\seda}{\Tilde{\mathcal{S}}_{\epsilon}}
\newcommand{\sseda}{{\Tilde{\mathcal{S}}^\sharp_{\epsilon}}}
\newcommand{\Le}{L_{\epsilon}}
\newcommand{\tmu}{\tilde{\mu}}
\newcommand{\Cc}{{C}_{c}^{\infty}}
\newcommand{\bfe}{\mathbf{e}}
\newcommand{\el}{\infty}
\newcommand{\proj}{\operatorname{Proj}}
\newcommand{\cR}{\mathcal{R}}
\newcommand{\Primes}{\mathbb{P}}
\newcommand{\ZZ}{\mathcal{Z}}
\newcommand{\Mat}{\M_{m \times n}(\R)}
\renewcommand{\vector}{\mathfrak{v}}
\newcommand{\cL}{\mathcal{C}}
\newcommand{\cl}{\operatorname{cl}}
\newcommand{\Int}{\operatorname{int}}
\newcommand{\J}{\mathbb{J}}
\title{Counting and Joint equidistribution of approximates}
\begin{document}
\theoremstyle{plain}
\newtheorem{thm}{Theorem}[section]
\newtheorem{lem}[thm]{Lemma}
\newtheorem{prop}[thm]{Proposition}
\newtheorem{cor}[thm]{Corollary}

\theoremstyle{definition}
\newtheorem{defn}[thm]{Definition}
\newtheorem{exm}[thm]{Example}
\newtheorem{nexm}[thm]{Non Example}
\newtheorem{prob}[thm]{Problem}

\theoremstyle{remark}
\newtheorem{rem}[thm]{Remark}

\author{Gaurav Aggarwal}
\address{\textbf{Gaurav Aggarwal} \\
School of Mathematics,
Tata Institute of Fundamental Research, Mumbai, India 400005}
\email{gaurav@math.tifr.res.in}

\author{Anish Ghosh}
\address{\textbf{Anish Ghosh} \\
School of Mathematics,
Tata Institute of Fundamental Research, Mumbai, India 400005}
\email{ghosh@math.tifr.res.in}

\date{}

\thanks{ A.\ G.\ gratefully acknowledges support from a J. C. Bose grant and a grant from the Infosys foundation to the Infosys Chandrasekharan Random Geometry Centre. G. \ A.\ and  A.\ G.\ gratefully acknowledge a grant from the Department of Atomic Energy, Government of India, under project $12-R\&D-TFR-5.01-0500$. }

\subjclass[2020]{11J13, 11J83, 37A17}
\keywords{Diophantine approximation, ergodic theory, high rank diagonal actions, flows on homogeneous spaces}


\begin{abstract} 
In this paper, we consider the problem of counting Diophantine inequalities with multiple natural constraints. We prove a very general result in this setting using dynamical techniques. More precisely, we consider the joint asymptotic distribution of $\varepsilon$-Diophantine approximates of matrices in several aspects. Our main results describe the resulting limiting measures for almost every matrix. Multiplicative Diophantine approximation is treated for the first time, and a number of Diophantine corollaries are derived. While we treat the general case of approximation of matrices, our results are already new for the case of simultaneous Diophantine approximation of vectors. Our approach is dynamical and is based on the construction of an appropriate Poincar\'{e} section for certain diagonal group actions on the space of unimodular lattices along with multiple mixing. The main new idea in our paper is a method that allows us to treat actions of higher rank groups.  
\end{abstract}

\maketitle

\tableofcontents

\section{Introduction}\label{Introduction}

For a real number $\alpha$, consider the inequality $$0 < q |q\alpha +p| < \varepsilon.$$

In Diophantine approximation, one seeks to obtain integer solutions to the above inequality, often with additional arithmetic information. For example, congruence conditions could be imposed, and in higher dimensions, the shape of the associated unimodular lattice could be considered, as well as the directions of the approximates. Thus, to the above Diophantine inequality, one can associate a number of arithmetic and geometric objects, and one could recast Diophantine approximation as the investigation of the distribution of these objects. All of these properties have individually been the subject of intense {activities} of late. An answer to a general form of this question was recently obtained by Shapira and Weiss \cite{SW22}. As a consequence of very general results, they showed that for almost every $\alpha$, these objects are jointly equidistributed in an appropriate phase space, equipped with a natural measure.\\  

It is natural to ask if one can obtain a similar satisfactory answer to the inequality $$0 < q|q\alpha +p_1|  |q\beta + p_2| < \varepsilon?$$

It turns out that one can indeed build a robust theory of approximates for \emph{multiplicative} inequalities like the one above, and this is the main subject of the present paper. In fact, we will deal with a very general set-up incorporating \emph{simultaneous} approximation of vectors as well as \emph{dual} approximation of linear forms. We will offer a unified treatment by studying Diophantine properties of matrices. We will further consider \emph{products} and also replace the $L^{\infty}$-norm with arbitrary norms. To a finite set of matrices, we will attach a packet of Diophantine data. That is,

\begin{enumerate}
\item the error term, namely $ q|q\alpha +p_1|  |q\beta + p_2|$ in the simplest case; this is the setting of classical multiplicative Diophantine approximation cf. \cite{BugSurvey}; 
\item the projection onto a unit sphere, this is just $ \left( \frac{q\alpha + p_1}{|q\alpha + p_1|}, \frac{q\beta + {p_2}}{|q\beta + p_2|} \right)$ for our toy example, but becomes more interesting in higher dimensions; cf. \cites{AGT, AlamGhoshTrans, KSW} for earlier work;
\item congruence conditions on the approximates $(q, p_1, p_2)$. This goes back to \cite{Szusz58} and has been recently studied in cf. \cite{AlamGhoshYu};
\item and finally, an object which provides a measure of the relative sizes of $|q|, |q\alpha +p_1| $ and $|q\beta +p_2|$ in terms of an associated unimodular lattice. This is sometimes referred to as the \emph{shape} of the lattice cf. \cites{Schmidt98, AES}. 

\end{enumerate}

We have only mentioned a representative sample of prior works on the above questions which have been extensively worked on. The work of Shapira and Weiss \cite{SW22} provides a unified general treatment. However, their work and the preceding works deal with classical Diophantine approximation. A key point of the present paper is to provide a unified general answer to the distribution of the above Diophantine data, in the \emph{multiplicative setting}. This presents considerable challenges, both conceptual and technical, and new ideas are needed to overcome them. From a conceptual point of view, the main issue that we have needed to address is the construction of a good (Poincar\'{e}) cross-section for actions of higher rank diagonal groups on the space of lattices. As far as we are aware, this is the first work where such a cross-section has been constructed and its properties studied. This issue also comes with technical difficulties. Further technical challenges are encountered while studying the genericity properties of adelic higher rank diagonal group actions. We refer the reader to Section \ref{sec:keying}  for a discussion of the key ideas in our paper.\\

A consequence of our main results gives that for almost every $\alpha, \beta$, the aforementioned constituents of the Diophantine packet jointly equidistribute in a natural phase space equipped with an explicit measure.  Our main theorem can be viewed as a very general \emph{counting result} in Diophantine approximation.\\

In order to state our main results, we begin by introducing the necessary definitions and notation. This paper is, by necessity, somewhat heavy on notation and draws on concepts from several distinct areas of mathematics. For the reader’s convenience, a table of notation is provided in Appendix~\ref{Table of notations}.

\subsection*{$\e$-approximation}

Suppose $m_1, \ldots , m_k$ and $n_1, \ldots, n_r$ are positive integers for some $ r,k \in \N$. Fix norms on each of $\R^{m_1}, \ldots, \R^{m_k}$ and $\R^{n_1}, \ldots, \R^{n_r}$. With slight abuse of notation, we denote each of these norms by $\|.\|$.

We denote $m=m_1+ \cdots + m_k$, $n= n_1 + \cdots + n_r$ and $d= m+n$. We define norms on $\R^m,$ $\R^n$ and $\R^d$ as
{\begin{align}
    \|(x_1, \ldots, x_k)\|&= \max_i \|x_i\| \quad \text{ for all } (x_1, \ldots, x_k) \in \R^{m_1} \times \cdots \times \R^{m_k} = \R^m \label{eq: def norm 1} \\
    \|(y_1, \ldots, y_r)\| &= \max_j \|y_j\|\quad \text{ for all }  (y_1, \ldots, y_r) \in \R^{n_1} \times \cdots \times \R^{n_r} = \R^n \label{eq: def norm 2} \\
    \|(x,y)\| &= \max\{ \|x\|, \|y\|\} \quad \text{ for all }  (x,y) \in \R^m \times \R^n = \R^d. \label{eq: def norm 3}
\end{align}}

For simplicity of notation later, we define the projection map $\varrho_1$ (resp. $\varrho_2$) from $\R^d= \R^m \times \R^n$ onto $\R^m$ (resp. $\R^n$). Similarly define for all $1 \leq i \leq k$, the projection maps $\rho_i$ from $ \R^m= \R^{m_1} \times \cdots \times \R^{m_k}$ onto the $i$-th component, $\R^{m_i}$ . Also define for all $1 \leq j \leq r$, the projection map $\rho_j': \R^n= \R^{n_1} \times \cdots \times \R^{n_r} \rightarrow \R^{n_j}$.

{Fix $\e > 0$, and fix positive real numbers $0 < \eta_1, \ldots, \eta_k$.} Given $\theta = (\theta_{ij}) \in \M_{m \times n}(\R)$, we call a vector $ (p,q ) \in \Z^{m} \times \Z^n$ an $\e$-approximation ({with respect to} the decomposition $m=m_1+ \cdots + m_k,$ $n= n_1 + \cdots +n_r$, choice of $\eta_1, \ldots, \eta_k${, and the choice of norms on $\R^{m_1}, \ldots, \R^{m_k}, \R^{n_1}, \ldots, \R^{n_r}$}) if 
\begin{align}
\nonumber \gcd(p,q) &=1, \\
   \nonumber \|\rho_i(p+ \theta q)\| &\leq \eta_i \text{ for all } i=1, \ldots, k,\\
    \label{eq: def e approx} 
    \left( \prod_{i=1}^k \|\rho_i(p+\theta q)\|^{m_i} \right)&. \left(\prod_{j=1}^r \max \{ 1, \|\rho_j'(q)\|^{n_j}\} \right) \leq \e.
\end{align}

\begin{rem}
    Clearly, the definition of $\e$-approximation of $\theta \in \M_{m \times n}(\R)$ depends on the choice of decomposition $m=m_1+ \cdots + m_k$, $n= n_1 + \cdots + n_r$, the choice of norms on $\R^{m_1}, \ldots, \R^{n_r}$ as well as the choice of $\eta_1, \ldots, \eta_k$. For the case $r = 1$ and $m_1 = m_2= \dots = m_k = 1$, {the equation} \eqref{eq: def e approx} exactly coincides with classical multiplicative Diophantine approximation.  
\end{rem}

Let us {record} some easy observations: 
\begin{enumerate}
\item { Without loss of generality, we may assume that
    $$
    \min_{q_i \in \mathbb{Z}^{n_i} \setminus \{0\}} \|q_i\| \geq 1 \quad \text{for all } i.
    $$
    Indeed, the general case can be reduced to this one by rescaling the norms on $\mathbb{R}^{n_i}$ by a factor $c > 0$ and replacing $\varepsilon$ with $\varepsilon \cdot c^{-n}$, for $c$ large enough. We will assume this normalisation throughout the rest of the paper.}
     \item {For Lebesgue almost every $\theta \in \mathcal{M}_{m \times n}(\mathbb{R})$, there are no solutions to \eqref{eq: def e approx} for which $\|\rho_i(p + \theta q)\| = 0$ for some $i$. Hence, we may assume that 
    $$
    \prod_{i=1}^k \|\rho_i(p + \theta q)\|^{m_i} > 0.
    $$
    In particular, if $\|\rho_j'(q)\| \neq 0$ for all $1 \leq j \leq r$, then the condition \eqref{eq: def e approx} may be replaced by
    \begin{align}
        \label{eq: def modified e approx}
        0 < \left( \prod_{i=1}^k \|\rho_i(p + \theta q)\|^{m_i} \right) \cdot \left( \prod_{j=1}^r \|\rho_j'(q)\|^{n_j} \right) \leq \varepsilon.
    \end{align}}
    
\end{enumerate}
\vspace{0.2in}
    
\subsection*{Natural Objects to consider} \hfill
{We now describe some natural objects associated with an $\varepsilon$-approximate pair $(p, q)$ for a matrix $\theta \in \mathcal{M}_{m \times n}(\mathbb{R})$ satisfying \eqref{eq: def modified e approx}.}

\subsubsection{The Error Term} 
The first natural object to consider is the \emph{error term}
\begin{align}
    \label{defdisp}
    \left( \prod_{i=1}^k \|\rho_i(p + \theta q)\|^{m_i} \right) \cdot \left( \prod_{j=1}^r \|\rho_j'(q)\|^{n_j} \right),
\end{align}
which lies in the interval $[0, \varepsilon]$. For ease of notation, {we denote the map that sends $(\theta, p, q)$ to the expression in \eqref{defdisp} by $\disp(.)$.}

\subsubsection{Projections}
The second natural object to associate with any Diophantine approximation $(p,q)$ is the direction of the vector $(p + \theta q, q)$, which corresponds to its \emph{projection} onto the unit {sphere; see, for instance, \cite{AGT}. This object can be written explicitly as:}
\begin{align}
    \label{defproj}
    \left( \frac{\rho_1(p + \theta q)}{\|\rho_1(p + \theta q)\|}, \ldots, \frac{\rho_k(p + \theta q)}{\|\rho_k(p + \theta q)\|}, \frac{\rho_1'(q)}{\|\rho_1'(q)\|}, \ldots, \frac{\rho_r'(q)}{\|\rho_r'(q)\|} \right).
\end{align}

For simplicity in later discussions, we define the map
$$
\proj: \M_{m \times n}(\R) \times \Z^m \times \Z^n \setminus \disp^{-1}(0) \rightarrow \Sphere^{m_1} \times \cdots \times \Sphere^{m_k} \times \Sphere^{n_1} \times \cdots \times \Sphere^{n_r}
$$
{to be the map} sending $(\theta, p, q)$ to the expression in \eqref{defproj}, where $\Sphere^l$ denotes the unit sphere in $\R^l$ with respect to the chosen norm on $\R^l$.

\subsubsection{The congruence condition}

In Diophantine approximation, {one is often interested in a subclass of approximates that satisfy prescribed \emph{congruence conditions}}. This is equivalent to studying the distribution of approximates $(p,q)$ in $(\Z/N\Z)^d$ for all $N \in \N$, {which can be recast in terms of their} distribution in $\hZ^d$, the $d$-fold Cartesian product of $\hZ$, the profinite completion of $\Z$. Thus, the third natural object to consider is the image of $(p,q)$ in $\hZp^d$, where $\hZp^d$ denotes the closure of the image of $\Z_\prim^d$ under the natural inclusion $\Z^d \hookrightarrow \hZ^d$. 

See \cite{AlamGhoshYu} for Diophantine approximation with congruence conditions, as well as \cite{SW22}; \cite{AG23} and \cite{AlamGhoshHan} for counting problems with congruence conditions, and \cite{borda2023} for limit theorems with congruence conditions.

\subsubsection{Relative Size and Derived Lattices}
\label{ sec: The Relative Size}
{
The three objects introduced above capture much of the geometric and arithmetic structure underlying $\e$-approximates. However, they do not yet reflect a crucial aspect: the \emph{relative sizes} of the quantities
\[
\|\rho_1(p+\theta q)\|, \ldots, \|\rho_k(p+\theta q)\|, \quad \|\rho_1'(q)\|, \ldots, \|\rho_r'(q)\|.
\]
To encode this information, we associate to each $\e$-approximate a suitably scaled unimodular lattice.}

{
Given $\theta \in \M_{m \times n}(\R)$, define the unimodular lattice
\begin{align}
    \label{eq:def lambda theta}
    \Lambda_\theta = \begin{pmatrix}
        I_m & \theta \\ & I_n
    \end{pmatrix} \Z^d.
\end{align}
By construction, $\Lambda_\theta$ contains the vector $(p+\theta q, q)$ as {a} primitive element. To incorporate the relative magnitudes of the projected components, we consider the scaled lattice
\[
\Lambda_\theta(p,q) = \begin{pmatrix}
    \|\rho_1(p+\theta q)\|^{-1} I_{m_1} \\ & \ddots \\ && \|\rho_r'(q)\|^{-1} I_{n_r}
\end{pmatrix} \Lambda_\theta.
\]
{ This lattice is obtained by scaling $\Lambda_\theta$ so that the vector $(p+\theta q, q)$ maps to $\proj(\theta, p, q)$. Note that $\Lambda_\theta(p,q)$ is not unimodular—it has covolume equal to $\disp(\theta, p, q)^{-1}$—but it records the sizes of the individual components in a precise geometric way.} }

{
More specifically, for each $1 \leq i \leq k$, the quantity $\|\rho_i(p+\theta q)\|^{-m_i}$ is the covolume of the $m_i$-dimensional lattice
\[
\Lambda_\theta(p,q) \cap \left( \{0\}^{m_1 + \cdots + m_{i-1}} \times \R^{m_i} \times \{0\}^{m_{i+1} + \cdots + m_k + n} \right)
\]
within its ambient subspace. Similarly, for each $1 \leq j \leq r$, the quantity $\|\rho_j'(q)\|^{-n_j}$ equals the covolume of the $n_j$-dimensional lattice
\[
\varrho_2(\Lambda_\theta(p,q)) \cap \left( \{0\}^{n_1 + \cdots + n_{j-1}} \times \R^{n_j} \times \{0\}^{n_{j+1} + \cdots + n_r} \right).
\]
Thus, $\Lambda_\theta(p,q)$ faithfully encodes the size data of the projected components.}

{
However, this lattice also retains information captured by the earlier objects. It has covolume $\disp(\theta, p, q)^{-1}$ and contains a primitive vector on the sphere $\Sphere^{m_1} \times \cdots \times \Sphere^{n_r}$—namely, the point $\proj(\theta, p, q)$. To isolate the relative size data while removing the influence of the direction and error term, we now define a \emph{derived lattice}.}

\medskip

{Let $\bfe_1, \ldots, \bfe_d$ denote the standard basis vectors of $\R^d$, and let $\E_d^j$ be the space of unimodular lattices in $\R^d$ that contain $\bfe_j$ as a primitive vector.}

{We define a map
\[
\lambda_j: \left(\M_{m \times n}(\R)  \times (\Z^m \times \Z^n)_\prim\right) \setminus \left( \disp^{-1}(0) \cup \{ (\theta, p, q) : (p + \theta q, q)_j = 0 \} \right) \to \E_d^j
\]
by setting
\[
\lambda_j(\theta, p, q) := A_j(\proj(\theta,p,q), \disp(\theta,p,q))\, \Lambda_\theta(p,q),
\]
where for $x \in \R^{m+n}$ satisfying $x_j \neq 0$ and $\gamma>0$, the matrix $A_j(x, \gamma) \in \GL_d(\R)$ is the unique matrix {with determinant $\gamma$} that maps $x$ to $\pm \bfe_j$—with sign matching the $j$-th coordinate of $x$—and acts as scalar multiplication on the orthogonal complement $\sspan\{\bfe_1, \ldots, \bfe_{j-1}, \bfe_{j+1}, \ldots, \bfe_d\}$.}

{By construction, the covolume of $\lambda_j(\theta, p, q)$ is
\[
\det(A_j(\proj(\theta,p,q), \disp(\theta,p,q))) \cdot \text{covolume}(\Lambda_\theta(p,q)) = \disp(\theta, p, q) \cdot \disp(\theta, p, q)^{-1} = 1,
\]
and it contains the vector
\[
A_j(\proj(\theta, p, q), \disp(\theta, p, q)) \proj(\theta, p, q) \in \{ \pm \bfe_j \}.
\]
Hence, $\lambda_j(\theta, p, q) $ is a unimodular lattice that contains $\bfe_j$ as a primitive vector.}

{
In summary, we now associate to each $\e$-approximate a fourth object: the \emph{derived lattice} $\lambda_j(\theta, p, q) \in \E_d^j$, for any fixed $1 \leq j \leq d$. }

\begin{rem}
Note that for $j>m$, the lattice $\lambda_j(\theta,p,q)$ still contains the information about the \emph{relative sizes} of the quantities
\[
\|\rho_1(p+\theta q)\|, \ldots, \|\rho_k(p+\theta q)\|, \quad 
\|\rho_1'(q)\|, \ldots, \|\rho_r'(q)\|.
\]

For simplicity, we illustrate this in the case $j=d$. In this case, there exists a constant $c=c(\theta,p,q)$ such that the covolume of the $m_i$-dimensional lattice
\[
\lambda_d(\theta,p,q) \cap 
\left( \{0\}^{m_1 + \cdots + m_{i-1}} \times \R^{m_i} \times 
\{0\}^{m_{i+1} + \cdots + m_k + n} \right)
\]
within its ambient subspace equals $c\,\|\rho_i(p+\theta q)\|^{-m_i}$.

Similarly, for each $1 \le j \le r-1$, the covolume of the $n_j$-dimensional lattice
\[
\varrho_2(\lambda_d(\theta,p,q)) \cap 
\left( \{0\}^{n_1 + \cdots + n_{j-1}} \times \R^{n_j} \times 
\{0\}^{n_{j+1} + \cdots + n_r} \right)
\]
equals $c\,\|\rho_j'(q)\|^{-n_j}$.

Finally, the quantity $c\,\|\rho_r'(q)\|^{-n_r}$ equals the covolume of the $n_r$-dimensional lattice
\[
\rho_r'(\varrho_2(\lambda_d(\theta,p,q))).
\]
\end{rem}
\begin{rem}
    Note that different values of $j$ become more natural for various values of $m,n$. In particular, for $n=1$, the choice $j=d$ is most natural and was considered in \cite{SW22}.
\end{rem}

\medskip

\subsection{Natural measures}
\label{subsec:Natural measures}

{Fix an integer $1 \leq j \leq d$. For notational convenience, we define the space
\begin{align}
    \label{ eq: def z j}
    \ZZ_j := \E_d^j \times \Sphere^{m_1} \times \cdots \times \Sphere^{m_k} \times \Sphere^{n_1} \times \cdots \times \Sphere^{n_r} \times [0,\e] \times \hZp^d.
\end{align}
We also define the map {
\begin{align}
    \label{ eq: def: Theta j}
    \Theta_j: \{(\theta,p,q) \in \Mat \times (\Z^m \times \Z^n)_\prim:  \disp(\theta,p,q) \in (0,\e),   (p+\theta q, q)_j \neq  0 \}  \longrightarrow \ZZ_j
\end{align}}
by
\begin{align}
    \label{eq: def Xi}
    \Theta_j(\theta, p, q) := \left( \lambda_j(\theta, p, q),\, \proj(\theta, p, q),\, \disp(\theta, p, q),\, (p, q) \right).
\end{align}}

{
The space $\ZZ_j$ is a product of several geometric spaces, each of which carries a natural measure. We now describe these component measures, and hence the product measure on $\ZZ_j$.}

\begin{enumerate}
    \item For all $1 \leq j \leq d$, note that $\E_d^j$ can be identified with a homogeneous space of the group $$H^j= \{h \in \SL_d(\R): h. \bfe_j =\bfe_j\}.$$ Thus, the most natural measure on {$\E_d^j$} is the unique $H^j$-invariant probability measure, denoted by $m_{\E_d^j}$. 
    \item For each $l \in \{m_1, \ldots, m_k, n_1, \ldots, n_r\}$, define the measure $\mu^{(\Sphere^l)}$ on $\Sphere^l$ as the pushforward of Lebesgue measure {$m_{\R^l}$ restricted to $\{x \in \R^l : \|x\| \leq 1\}$} under the projection map $x \mapsto x / \|x\|$. Note that $\mu^{(\Sphere^l)}$ is not necessarily a probability measure.
    \item {On the interval $[0,\e]$, we use the Lebesgue measure restricted to this interval, denoted $m_{\R}|_{[0,\e]}$.} 
    \item {The set $\hZp^d$ is an orbit under the natural action of $\SL_d(\hZ)$ on $\hZ^d$. Thus, the natural measure on $\hZp^d$ is the unique $\SL_d(\hZ)$-invariant probability measure, denoted by $m_{\hZp^d}$.}
\end{enumerate}

{Putting these together,} the natural measure on the full product space is
\begin{align}
    \label{eq:def tmu j}
    \tmu^j= m_{\E_d^j} \times   \mu^{(\Sphere^{m_1})} \times \cdots \times \mu^{(\Sphere^{m_k})} \times \mu^{(\Sphere^{n_1})} \times \cdots \times \mu^{(\Sphere^{n_r})} \times m_{\R}|_{[0,\e]} \times  m_{\hZp^d}.
\end{align}
We also define $\mu^j$ as the probability measure obtained by normalising $\tmu^j$, for $1 \leq j \leq d$.

\vspace{0.2in}

\subsection*{Main Theorems} 
The first main result of this paper is the following {equidistribution} theorem.

\begin{thm}
    \label{thm: cor to main thm}
    Fix an index $1 \leq j \leq d$. Then, for Lebesgue almost every $\theta \in \M_{m \times n}(\R)$, the following holds. Let $(p_l, q_l) \in \Z^m \times \Z^n$ denote the sequence of $\e$-approximates to $\theta$, ordered by increasing $\|q_l\|$. Then the sequence{
    \begin{equation*}
        \left( \Theta_j(\theta, p_l, q_l) \right)_{l \in \N}
    \end{equation*}
    becomes equidistributed in the space $\ZZ_j$ with respect to the probability measure $\mu^j$. In other words, for every bounded continuous function $f$ on $\ZZ_j$, we have:
    \begin{equation}
        \label{eq: main thm general 2}
        \lim_{N \to \infty} \frac{1}{N} \sum_{l=1}^N f\left( \Theta_j(\theta, p_l, q_l) \right) 
        = \mu^j(f).
    \end{equation}}
\end{thm}

{Theorem~\ref{thm: cor to main thm} is a consequence of our second main result, which provides an asymptotic count of $\e$-approximates satisfying the required geometric and arithmetic constraints.}

\begin{thm}
    \label{main thm}
    {Fix an index $1 \leq j \leq d$, and let $A \subset \ZZ_j$ be a measurable set whose boundary has zero measure with respect to $\tmu^j$, that is,
    \[
    \tmu^j(\partial A) = 0.
    \]
    Then, for Lebesgue almost every $\theta \in \M_{m \times n}(\R)$, the number of $\e$-approximates $(p, q) \in \Z^m \times \Z^n$ of $\theta$ satisfying
    \[
    \Theta_j(\theta, p, q) \in A \quad \text{and} \quad \|q\| \leq e^T
    \]
    is asymptotic to
    \[
    \frac{c_{k+r-1}(n_1, \ldots, n_r)}{(k+r-1)! \, \zeta(d)} \, \tmu^j(A) \, T^{k+r-1}
    \]
    as $T \to \infty$, where
    \begin{align}
        \label{eq: def c k r 1}
        c_{k+r-1}(n_1, \ldots, n_r) = \sum_{(x_1, \ldots, x_r) \in \{0,1\}^r} (-1)^{r - (x_1 + \cdots + x_r)} (n_1 x_1 + \cdots + n_r x_r)^{k+r-1}.
    \end{align}}
\end{thm}

{\begin{rem}
\label{rem: importance  of A}
    We emphasise that by choosing the set \( A \) appropriately in Theorem \ref{main thm}, one can impose various types of constraints on the approximates. These include:
    \begin{itemize}
        \item \emph{Congruence conditions} via the \( \hZ^d \) component,
        \item \emph{Directional constraints} via the \( \Sphere^{m_1} \times \cdots \times \Sphere^{n_r} \) component,
        \item \emph{Quality of approximation} via the \( \R \) component, and
        \item {\emph{Relative size constraints} via the \( \E_d^j \) component.}
    \end{itemize}
\end{rem}
}
\medskip

\begin{rem}
For a fixed $\theta$, some quantities such as $\lambda_j(\theta, p, q)$ or $\proj(\theta, p, q)$ may be undefined for certain $\e$-approximates $(p, q)$. Theorems~\ref{thm: cor to main thm} and~\ref{main thm} should be interpreted as asserting that the number of such exceptional approximates is $o(N)$ and $o(T^{k+r-1})$ respectively, and hence negligible.
\end{rem}

\begin{rem}
{Using Theorem~\ref{muJM}, one sees that Theorem~\ref{main thm} is equivalent to the following statement: for Lebesgue almost every $\theta \in \M_{m \times n}(\R)$, if $(p_l, q_l) \in \Z^m \times \Z^n$ is the sequence of $\e$-approximates ordered by increasing $\|q_l\|$, then
\begin{equation}
\label{eq: main thm 2}
\lim_{T \to \infty} \frac{1}{T^{k+r-1}} \sum_{\|q_l\| \leq e^T} \delta_{\Theta_j(\theta,p_l,q_l)} 
= \frac{c_{k+r-1}(n_1, \ldots, n_r)}{(k+r-1)! \, \zeta(d)} \, \tmu^j,
\end{equation}
where the convergence is in the \emph{tight topology} (see Appendix~\ref{Tight Convergence}). In other words, for every bounded continuous function $f$ on $\ZZ_j$, we have
\begin{equation}
    \label{eq: main thm 1}
    \lim_{T \to \infty} \frac{1}{T^{k+r-1}} \sum_{\|q_l\| \leq e^T} f\left( \Theta_j(\theta,p_l,q_l) \right) = \frac{c_{k+r-1}(n_1, \ldots, n_r)}{(k+r-1)! \, \zeta(d)} \, \tmu^j(f). 
\end{equation}}

{We emphasise that neither the sequence on the left-hand side nor the measure on the right-hand side of \eqref{eq: main thm 2} are probability measures. Therefore, convergence in the tight topology is strictly stronger than convergence in the weak topology. In the weak topology, convergence of measures only guarantees convergence of integrals against compactly supported continuous functions. In particular, it does \emph{not} imply convergence for constant test functions, and therefore cannot rule out loss of mass. For example, a sequence of probability measures $\nu_\ell$ on a non-compact space may converge weakly to the zero measure.

In contrast, convergence in the tight topology implies that the limiting measure retains the total mass: if $\nu_\ell$ is a sequence of probability measures converging tightly, then the limit $\nu$ is also a probability measure. Thus, Equation~\eqref{eq: main thm 2} implies not only equidistribution in the weak sense, but also the absence of escape of mass.}
\end{rem}

{\begin{rem}
    Theorem~\ref{thm: cor to main thm} follows as a consequence of Theorem~\ref{main thm}. To see this, fix $\theta \in \Mat$ satisfying equation~\eqref{eq: main thm 1}. Let $(p_l, q_l) \in \Z^m \times \Z^n$ denote the sequence of $\e$-approximates to $\theta$, ordered by increasing $\|q_l\|$. Fix a bounded continuous function $f$ on $\ZZ_j$, and define
    \[
    T_N = \log \|q_N\|.
    \]
    Then we have
    \begin{align}
    \label{eq: xax 1}
        \lim_{N \to \infty} \frac{1}{N} \sum_{l=1}^N f\left( \Theta_j(\theta, p_l, q_l) \right)
        = \lim_{N \to \infty} \frac{ \frac{1}{T_N^{k+r-1}} \sum_{\|q_l\| \leq e^{T_N}} f\left( \Theta_j(\theta, p_l, q_l) \right)}{ \frac{1}{T_N^{k+r-1}} \sum_{\|q_l\| \leq e^{T_N}} \mathbf{1}\left( \Theta_j(\theta, p_l, q_l) \right)},
    \end{align}
    where $\mathbf{1}$ denotes the constant function on $\ZZ_j$ with value $1$. Since $T_N \to \infty$ as $N \to \infty$, we may apply equation~\eqref{eq: main thm 1} to both the numerator and denominator in \eqref{eq: xax 1}. This yields
    \begin{align*}
        \lim_{N \to \infty} \frac{1}{N} \sum_{l=1}^N f\left( \Theta_j(\theta, p_l, q_l) \right)
        =  \frac{ \frac{c_{k+r-1}(n_1, \ldots, n_r)}{(k+r-1)! \, \zeta(d)} \, \tmu^j(f)}{\frac{c_{k+r-1}(n_1, \ldots, n_r)}{(k+r-1)! \, \zeta(d)} \, \tmu^j(\mathbf{1})} = \mu^j(f).
    \end{align*}
    The conclusion of Theorem~\ref{thm: cor to main thm}, namely equation~\eqref{eq: main thm general 2}, thus follows.
\end{rem}}

\begin{rem}
\label{rem:reduction of main thm}
    It turns out that it is enough to prove Theorem \ref{main thm} for $j=d$. Indeed, consider the map
\begin{align}
\label{eq: def phi i d}
\phi_{jd} :\ZZ_d \to \ZZ_j
\end{align}
defined by
\[
\phi_{jd}(\Lambda, x, \gamma, v) := {\left(A_j(x, \gamma) A_d(x, \gamma)^{-1} \Lambda, x, \gamma, v\right)}.
\]
This map pushes forward the measure $\tmu^d$ to $\tmu^j$, and satisfies
\[
\phi_{jd}\left(\Theta_d(\theta,p,q)\right)  = \Theta_j(\theta,p,q).
\]
{Hence, the general case reduces to the case $j = d$. For notational simplicity, we will simply denote $\E^d_d$ by $\E_d$, the measure $m_{\E^d_d}$ by $m_{\E_d}$ throughout this paper.}
\end{rem}
\vspace{0.2in}

{The third main theorem of the paper focuses on the approximates in the special case $k = r = 1$.}

\begin{thm}
\label{thm: cor to Main thm time visits}
    Fix $1 \leq j \leq d$. {Let $A \subset \ZZ_j$ be a subset satisfying
    \begin{align}
        \label{eq: con cor time 1}
        &\tmu^j(\partial A) = 0, \quad  \quad \tmu^j(A) > 0, \\
        \label{eq: con cor time 2}
        A &\subset \left\{ (\Lambda, v, \gamma, w) \in \E_d^j \times (\Sphere^m \times \Sphere^n) \times [0,\e] \times \hZ^d : v_j \geq 0 \right\}.
    \end{align}
    Fix an integer $s \in \Z_{\geq 0}$.} Then there exists a probability measure $\nu^{A,s}$ on $\R^m$ such that the following holds for Lebesgue almost every $\theta \in \M_{m \times n}(\R)$. Let $(p_l, q_l) \in \Z^m \times \Z^n$ be the sequence of $\e$-approximates of $\theta$, ordered by decreasing $\|p_l + \theta q_l\|$, and satisfying
    \begin{equation}
        \label{eq: cor to Main thm time visits}
        \Theta_j(\theta, p_l, q_l) \in A.
    \end{equation}
    Then the sequence
    \[
    \left( \|q_{l+s}\|^{n/m}(p_l + \theta q_l) \right)_l
    \]
    equidistributes {with respect to the measure $\nu^{A,s}$}. {That is, for every bounded continuous function $f$ on $\R^m$, we have
    \begin{equation*}
        \lim_{N \to \infty} \frac{1}{N} \sum_{l=1}^N f\left( \|q_{l+s}\|^{n/m}(p_l + \theta q_l) \right)
        = \nu^{A,s}(f).
    \end{equation*}}
\end{thm}

\begin{rem}
    Note that the condition \eqref{eq: con cor time 2} is needed {to select only once between an $\e$-approximate $(p, q)$ and its negative $(-p, -q)$.} Otherwise, if we consider both the approximates $(p,q)$ and $(-p,-q)$, then ordering according to decreasing $\|p+ \theta q\|$ is not unique.
\end{rem}

\begin{rem}
    Using the techniques developed in the paper, a similar version of Theorem \ref{thm: cor to Main thm time visits} is possible, where $(p_l,q_l)$ are ordered according to increasing $\|q_l\|$. In this case, the distribution of $\|q_{l+i}\|/\|q_{l+i-1}\|$ can also be studied. We leave the details to the interested readers.
\end{rem}

\vspace{0.2in}

\subsection{Key Ingredients}\label{sec:keying}
The broad strategy used in our paper is to utilise a cross-section in the space of lattices. This is the strategy used by Shapira and Weiss \cite{SW22}, by Cheung and Chevallier \cite{CC19}, and goes back to Arnoux and Nogueira \cite{AN93}. The general theory of cross-sections was developed by Kakutani and Ambrose, and is nicely summarised in \cite{SW22}, and cross sections have found great use in homogeneous dynamics, cf. \cites{AthreyaCheung, Marklof2010, MarStro}. However, several new ideas are required to deal with even the simplest multiplicative toy example in the introduction. This section briefly discusses the new ideas developed in this paper. Approximately, two essential ingredients are used in the paper.

The first is the construction of a cross-section for a multiparameter flow in the adelic space $\SL_d(\A)/\SL_d(\Q)$ (defined in Section \ref{sec:Adelizing the Setup}) and the association of a natural measure with it. This posed a significant conceptual challenge, as the existing theories only address one-parameter flows. To the best of our knowledge, an analogous theory for multi-parameter flows does not exist in the literature, making this the first study of a cross-section for such flows. The analogous theory faces difficulties at several points due to the absence of any natural {\em return time function} and {\em first return map}, which play an important role for cross-sections in the one-parameter setting. We hope that the theory of cross-sections for multi-parameter flows, developed in this paper through an example, will find many more interesting applications. In particular, we refer the reader to \cite{AG24Levy}, where this theory is used to prove L\'{e}vy-Khintchine Theorem in a generalised setup. 

The construction also posed a significant technical challenge, as the cross-section should ideally allow for a one-to-one correspondence between the time visits of the lattice $\Tilde{\Lambda}_\theta$ (a definition is provided in due course) to the cross-section up to time $T$ and the Diophantine approximates $(p,q)$ such that {$\|q\| \leq e^T$}. For the case $k=r=1$, corresponding to the usual additive Diophantine approximation, this construction is possible because the domain, obtained from Dani's correspondence, can be nicely tessellated. However, an equivalent cross-section for the multiplicative case proved elusive. As a result, we opted for a cross-section that satisfies a weaker property: while every time visit of the lattice $\Tilde{\Lambda}_\theta$ to the cross-section up to time $T$ corresponds to a Diophantine approximate $(p,q)$ with {$\|q\| \leq e^T$}, the number of Diophantine approximates that do not correspond to time visits is negligible. This cross-section has a particular advantage even when $k=r=1$ and the flow is one-parameter, as it allows for the treatment of Diophantine approximation of matrices, whereas in \cite{SW22}, the authors treated the case $n=1$.

The second technical issue is to prove the Birkhoff genericity of
\begin{align}
    \label{eq: def tilde theta}
    \Tilde{\Lambda}_\theta= \left(\begin{pmatrix}
    I_m & \theta \\ & I_n
\end{pmatrix}, e_f \right)\SL_d(\Q) \in \XA_d.
\end{align}
for Lebesgue almost every $\theta$, under the multi-parameter diagonal flow $a_{t}$ (precise definitions to appear later). To deal with this, we split the problem into two parts. The first part involves proving the Birkhoff genericity of $\Lambda_\theta$ in $\X_d$ (the space of all unimodular lattices). The proof requires strong results of \cite{KM1}, \cite{BG21} regarding effective equidistribution of translates of topological tori under multiparameter flows. Specifically, we prove a new result on the effective equidistribution of the orbit, which generalises several results of \cite{KSW} to multiparameter flows. The second part involves proving the equivalence of the Birkhoff genericity of $\Lambda_\theta$ in $\X_d$ to the Birkhoff genericity in $\XA_d$. This answers a question asked in \cite[Remark~13.1]{SW22}.

\subsection{Structure of the paper} 
Section~\ref{sec:corollaries} derives several corollaries of the main theorem and provides a brief historical overview of Diophantine approximation with constraints. Section~\ref{sec: Notation} introduces the basic notation used throughout the paper. Section~\ref{sec:Birkhoff Genericity} establishes the Birkhoff genericity of $\Tilde{\Lambda}_\theta$ for Lebesgue-almost-every $\theta$ under a multi-parameter flow, and answers the question posed by Shapira and Weiss~\cite[Remark~13.1]{SW22}. 

Section~\ref{sec: The Cross-section} introduces the cross-section in the real case and associates a measure to it. Section~\ref{sec: Explicit formula of the cross-sectional measure} provides an explicit description of this measure, and Section~\ref{sec:Properties of sed} develops further properties of the cross-section. Section~\ref{sec:Adelizing the Setup} lifts both the cross-section and the cross-sectional measure to the adele space, extending all the necessary properties to this setting. Section~\ref{sec:Cross-section Correspondence} demonstrates the significance of the cross-section by relating the time visits of $\Tilde{\Lambda}_\theta$ under the multi-parameter flow to $\varepsilon$-approximates of $\theta$, establishing a sharp lower bound for their asymptotic count. The complementary upper bound is obtained in Section~\ref{sec: Upper Bound for Counting} using classical results on almost-sure asymptotic counts of solutions to Diophantine inequalities~\cites{WYYK, S60, Gallagher}. These bounds are combined in Section~\ref{sec: Proof of main Theorem} to complete the proof of Theorem~\ref{main thm}.

Sections~\ref{sec: Time visits} and~\ref{section: Time visit proof} focus on the special case $k=r=1$, where the flow becomes one-parameter and the theory of cross-sections is comparatively simpler and more extensively studied in the literature. Section~\ref{sec: Time visits} analyses the return time function and first return map for the cross-section, and Section~\ref{section: Time visit proof} uses these results to prove Theorem~\ref{thm: cor to Main thm time visits}.

Appendix~\ref{Tight Convergence} recalls basic facts about Jordan measurable sets and tight convergence; Appendix~\ref{sec: Convergence in Compact extension} establishes an equivalence---under certain assumptions---between the equidistribution of an orbit in a factor space and in its compact extension, a key ingredient in Section~\ref{sec:Birkhoff Genericity} for proving the equivalence of Birkhoff genericity in the real and adelic settings; Appendix~\ref{Appendix: Measure Theory} studies the effective equidistribution of the orbit under the multi-parameter diagonal flow, extending the techniques of~\cite[Section~3]{KSW} to the multi-parameter case; and Appendix~\ref{sec: measure of JT} computes the measure of the set $J^T$.

\noindent \textbf{Acknowledgements.}
We thank Uri Shapira, and Barak Weiss for many helpful discussions of their paper \cite{SW22}, which served as the starting point of our investigations. We thank Jens Marklof for helpful discussions. We are especially grateful to the anonymous referee for a careful and patient reading and for numerous valuable suggestions, which have greatly improved the paper.


\section{Diophantine Corollaries and Prior Work}
\label{sec:corollaries}
There has been extensive prior work on Diophantine approximation with constraints. We refer the reader to Section 3 in \cite{SW22}, which contains a comprehensive list of references. In the same section, Shapira and Weiss explain how their work generalises many such results. Our main theorems also generalise these existing results; the main novelty is that we are able to treat \emph{multiplicative} approximation for the first time. This distinguishes our paper from the work of Shapira and Weiss \cite{SW22} and earlier results. In this section, we illustrate the strength of our theorems by discussing some applications.

\subsection{Equidistribution of displacement vector}

Let $\theta \in \M_{m \times n }(\R)$, and let $v_l=(p_l, q_l) \in \Z^m \times \Z^n$ denote the sequence of $\e$- approximates to $\theta$, defined as in Section \ref{Introduction}, {ordered according to increasing $\|q_l\|$}. We want to study the distribution of 
\begin{align}
    \label{eq: cor 1}
    \left(\frac{\rho_1(p_l+ \theta q_l)}{\|\rho_1(p_l+ \theta q_l)\|}, \ldots, \frac{\rho_k(p_l+ \theta q_l)}{\|\rho_k(p_l+ \theta q_l)\|}, \frac{\rho_1'(q_l)}{\|\rho_1(q_l)\|}, \ldots, \frac{\rho_r'(q_l)}{\|\rho_r'(q_l)\|}, \disp(\theta, (p_l, q_l)) \right)
\end{align}
in the space $\Sphere^{m_1} \times \cdots \times \Sphere^{m_k} \times \Sphere^{n_1} \times \cdots \times \Sphere^{n_r}\times \R$. When $k=1$ and $n=1$, this is equivalent to considering the distribution of $ q_l^{1/m}(p_l+\theta q_l)$. From Theorems \ref{main thm}, we immediately get the following statements, valid for any norm.

\begin{cor}
Let $\e>0$ be arbitrary and let $\mu'$ be the normalization of the product measure $\mu^{(\Sphere^{m_1})} \times \cdots \times \mu^{(\Sphere^{m_k})} \times \mu^{(\Sphere^{n_1})} \times \cdots \times \mu^{(\Sphere^{n_r})} \times m_{\R}|_{[0,\e]}$.

    Then for a.e. $\theta \in \M_{m \times n}(\R)$, the sequence \eqref{eq: cor 1} of errors and direction of $\e$-approximations of $\theta$ equidistributes with respect to $\mu'$. 
\end{cor}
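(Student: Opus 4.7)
The strategy is to deduce this corollary as an immediate marginal statement from Theorem~\ref{thm: cor to main thm}. Fix any $j$, for instance $j=d$. That theorem supplies, for Lebesgue almost every $\theta \in M_{m\times n}(\R)$, equidistribution of the full packet
$$
(\lambda_j(\theta, p_l, q_l),\, \proj(\theta, p_l, q_l),\, \disp(\theta, p_l, q_l),\, (p_l, q_l))_{l\in\N}
$$
with respect to the probability measure $\mu^j$ in the product space $\E_d^j\times\Sphere^{m_1}\times\cdots\times\Sphere^{m_k}\times\Sphere^{n_1}\times\cdots\times\Sphere^{n_r}\times(0,\e)\times\hZp^d$. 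Here $\mu^j$ is the normalisation of $\tmu^j$ defined in \eqref{eq:def tmu j}.

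Consider the natural continuous projection $\pi$ from the above product space onto the ``direction-and-error'' factor $\Sphere^{m_1}\times\cdots\times\Sphere^{m_k}\times\Sphere^{n_1}\times\cdots\times\Sphere^{n_r}\times(0,\e)$. By construction, applying $\pi$ to the packet above produces exactly the sequence appearing in \eqref{eq: cor 1}. Since pushforward by a continuous map preserves weak convergence of probability measures (and, more generally, tight convergence of the associated empirical measures), the sequence \eqref{eq: cor 1} equidistributes with respect to $\pi_*\mu^j$.

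It remains to identify $\pi_*\mu^j$ with $\mu'$. Because $\tmu^j$ is the product measure
$$
m_{\E_d^j}\times\mu^{(\Sphere^{m_1})}\times\cdots\times\mu^{(\Sphere^{m_k})}\times\mu^{(\Sphere^{n_1})}\times\cdots\times\mu^{(\Sphere^{n_r})}\times m_{\R}|_{[0,\e]}\times m_{\hZp^d},
$$
and both $m_{\E_d^j}$ and $m_{\hZp^d}$ are probability measures on the coordinates being integrated out, the total mass of $\tmu^j$ coincides with that of $\mu^{(\Sphere^{m_1})}\times\cdots\times\mu^{(\Sphere^{n_r})}\times m_{\R}|_{[0,\e]}$. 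Consequently $\pi_*\mu^j$ is precisely the normalisation of the latter, which is $\mu'$ by definition.

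There is no substantive obstacle: the corollary is a purely formal marginal of the main theorem. The only small point to observe is that the normalising constants of $\mu^j$ and $\mu'$ agree, which follows from the fact that the two factors $\E_d^j$ and $\hZp^d$ that are discarded by $\pi$ are already equipped with probability measures.
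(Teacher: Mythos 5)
Your proposal is correct and matches the paper's intent: the paper derives this corollary as an immediate consequence of the main equidistribution theorem, exactly by viewing \eqref{eq: cor 1} as the continuous projection of the full packet onto the direction-and-error factors and noting that the discarded factors $\E_d^j$ and $\hZp^d$ carry probability measures, so the marginal of $\mu^j$ is $\mu'$. No gap; your pushforward argument is the same (essentially one-line) reasoning the paper leaves implicit.
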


\medskip

\subsection{Equidistribution in $\hZ^d$ and congruence constraints}

For a positive integer $t$ and a vector {$a = (a_1, \ldots, a_d) \in (\Z / t\Z)^d$}, we say that {$a$ is \emph{primitive mod $t$} if there does not exist a non-unit $b \in \Z / t\Z$} and a vector {$c = (c_1, \ldots, c_d) \in (\Z / t\Z)^d$} such that $a_i = b c_i$ for all $i$. Let $N_{t,d}$ denote the number of primitive vectors in $(\Z / t\Z)^d$. It is not hard to verify that
\[
N_{t,d} = \prod_{i=1}^j \left( p_i^{d s_i} - p_i^{d(s_i - 1)} \right),
\]
where $t = p_1^{s_1} \cdots p_j^{s_j}$ is the prime factorization of {$t$}.

The following corollary follows immediately from Theorem \ref{main thm}.

\begin{cor}
    For Lebesgue almost every $\theta \in \M_{m \times n}(\R)$, the sequence {$(v_l)_{l \in \N}$ of $\e$-approximations satisfies
    $$
    \frac{1}{N} |\{1 \leq l \leq N: v_l \equiv a \mod t \}| \longrightarrow_{N \rightarrow \infty} c,
    $$}
     where
    $$
    c = \begin{cases}
        \frac{1}{N_{t,d}} \text{ if $a$ is primitive mod $t$} \\
        0 \text{ otherwise.}
    \end{cases}
    $$
\end{cor}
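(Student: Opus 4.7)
The plan is to derive the corollary directly from Theorem \ref{thm: cor to main thm} by projecting the joint equidistribution onto the $\hZp^d$-factor. The probability measure $\mu^j$ is the normalization of a product measure whose $\hZp^d$-marginal is the $\SL_d(\hZ)$-invariant probability measure $m_{\hZp^d}$. Therefore, for almost every $\theta \in M_{m \times n}(\R)$, the projected sequence $(w_l)_{l \in \N}$ of $\e$-approximates equidistributes in $\hZp^d$ with respect to $m_{\hZp^d}$.

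Next, I would apply this equidistribution to the cylinder set
$$C_{a,t} := \{w \in \hZp^d : w \equiv a \pmod{t}\},$$
which is the preimage of $\{a \bmod t\}$ under the continuous reduction map $\hZp^d \to (\Z/t\Z)^d$. Hence $C_{a,t}$ is clopen with empty topological boundary, so it is automatically a continuity set (equivalently, the corresponding cylinder in the full product space is $\mu^j$-JM). Theorem \ref{thm: cor to main thm} then yields
$$\frac{1}{N}\bigl|\{1 \leq l \leq N : w_l \equiv a \pmod{t}\}\bigr| \longrightarrow m_{\hZp^d}(C_{a,t}).$$

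It then remains to identify $m_{\hZp^d}(C_{a,t})$ with the claimed constant $c$. The image of $\hZp^d$ under reduction modulo $t$ is precisely the set of primitive residues mod $t$: a primitive integer vector reduces to a primitive residue, and conversely every primitive residue lifts to a primitive integer vector (given a lift with gcd $g$, one has $\gcd(g, t) = 1$, and a Chinese remainder theorem argument lets one modify the lift to be globally coprime). Hence $m_{\hZp^d}(C_{a,t}) = 0$ whenever $a$ is not primitive mod $t$. When $a$ is primitive mod $t$, the pushforward of $m_{\hZp^d}$ along reduction is $\SL_d(\Z/t\Z)$-invariant by the $\SL_d(\hZ)$-invariance of $m_{\hZp^d}$; since $\SL_d(\Z/t\Z)$ acts transitively on the $N_{t,d}$ primitive residues, the pushforward is uniform on them, yielding $m_{\hZp^d}(C_{a,t}) = 1/N_{t,d}$.

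The only substantive ingredient beyond Theorem \ref{thm: cor to main thm} is the identification of the $\hZp^d$-pushforward with the uniform measure on primitive residues mod $t$, which rests on the transitivity of the $\SL_d(\Z/t\Z)$-action on that set and on the surjectivity of the reduction $\hZp^d \to \{$primitive residues mod $t\}$. Both are classical, and once in place the corollary follows by assembling the three steps above.
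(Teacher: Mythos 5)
Your proposal is correct and follows exactly the route the paper intends: the paper gives no separate argument, stating that the corollary follows immediately from Theorem \ref{main thm}, and your derivation simply fills in the natural details — projecting the joint equidistribution onto the $\hZp^d$-factor, using that the congruence cylinder is clopen (hence a continuity/JM set), and computing its $m_{\hZp^d}$-measure via transitivity of $\SL_d(\Z/t\Z)$ on primitive residues. No gaps; the identification $m_{\hZp^d}(C_{a,t})=1/N_{t,d}$ for primitive $a$ and $0$ otherwise is the only substantive step, and your argument for it is sound.
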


\medskip

\subsection{Comparison with the work of Shapira and Weiss}
We define $\e^+$-approximations of a vector $\theta \in \R^m$ as integers $(p,q) \in \Z^m \times \N$ satisfying $\gcd(p,q)=1$ and $q^{1/d}\|p+ q\theta\| < \e$. In our earlier notation with choice of $k=r=n=1$, these are exactly $\e^d$-approximation $(p,q)$ of $\theta$ satisfying $q>0$.

One of the main theorems in \cite{SW22}, namely {Theorem} 1.2, discusses the $\e^+$-approximation of Lebesgue a.e. $\theta \in \R^m$. Their theorem followed from a much more general statement in the same paper, namely \cite[Thm.~2.1]{SW22}, which for $\e^+$-approximation of Lebesgue a.e. $\theta \in \R^m$ says that
\begin{thm}[{\cite[Thm.~2.1]{SW22}}]
    For Lebesgue almost every $\theta \in \M_{m \times 1}(\R)$, the following holds. Let $(p_l, q_l) \in \Z^m \times \N$ be the sequence of $\e^+$-approximations of $\theta$, ordered according to increasing $q_l$. Then the sequence 
    {\begin{equation}
    \label{eq: main thm SW22}
        (\lambda_d(\theta, p_l, q_l), q_l^{1/d} (p_l + q_l \theta) , (p_l, q_l))_{l \in \N} \in \E_d^d \times \R^{m} \times \hZ^d
    \end{equation}}
    equidistributes with respect to the probability measure obtained by normalising the finite measure $m_{\E_{d}^d} \times m_{\R^m}|_{\{x: \|x\| \leq \e\}} \times m_{\hZp^d}$.
\end{thm}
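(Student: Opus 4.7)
The plan is to deduce the Shapira--Weiss statement as the specialization $k=r=n=1$ of the machinery the paper is developing. First I would specialize the general framework: with $n=1$ and $k=1$, the $\e^+$-approximation condition reduces (after the cube/sign convention) to $\e^d$-approximation in the sense of \eqref{eq: def e approx}, with a single product $\prod_{i}\|\rho_i(p+\theta q)\|^{m_i}$ degenerating to $\|p+q\theta\|^m$ and the $q$-factor being $q^{n_1}=q$. The target space in \eqref{eq: main thm SW22} matches the product space of Section \ref{subsec:Natural measures} with $j=d$, and the natural measure $\tmu^d$ collapses to $m_{\E_d^d}\times \mu^{(\Sphere^m)}\times m_{\R}|_{[0,\e^d]}\times m_{\hZp^d}$. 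After pushing through the sphere coordinate via $q^{1/d}(p+q\theta)$, the density on the sphere times the Lebesgue error can be recombined into $m_{\R^m}|_{\{\|x\|\leq \e\}}$, so it suffices to prove \eqref{eq: main thm SW22} in the form of Theorem \ref{thm: cor to main thm} specialized to $j=d$, $k=r=n=1$, $m_1=m$, $n_1=1$.

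Second, I would implement the cross-section strategy on the adelic space $\XA_d=\SL_d(\A)/\SL_d(\Q)$, following Section \ref{sec:Adelizing the Setup}. One chooses the Dani one-parameter flow $a_t=\mathrm{diag}(e^{t/m},\dots,e^{t/m},e^{-t})$; under this flow, the lifted lattice $\tilde\Lambda_\theta$ has the property that $a_t\tilde\Lambda_\theta$ contains a short vector in $\R^m\times\R$ at time $t\approx \log q$ precisely when $(p,q)$ is an $\e^+$-approximate. The cross-section $\sed$ is the one constructed in Section \ref{sec: The Cross-section}: lattices containing a unique marked primitive vector with $\|\rho_1\|$-component in the appropriate window and error $\leq \e^d$. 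I would then invoke Theorem \ref{thm: Adeles} to transport the cross-sectional measure $\mu_\sed$ to the adelic setting, guaranteeing that $\mu_\sed$ on $\sed$ is (up to the explicit constant $1/((k+r-1)!\zeta(d))=1/\zeta(d)$) the correct projection of the Haar-adelic measure.

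Third, I would establish Birkhoff genericity of $\tilde\Lambda_\theta$ under $a_t$ for Lebesgue-a.e.\ $\theta$. In this one-parameter case, effective equidistribution of translates of the unstable horospherical torus $\{u_\theta\Z^d:\theta\in\R^m\}$ under $a_t$ is classical (Kleinbock--Margulis, together with the multiparameter Borel--Cantelli reformulation of Section \ref{sec:Birkhoff Genericity}); this yields genericity on $\X_d$. Then I would apply the compact-extension lifting lemma of Section \ref{sec:Birkhoff Genericity} (answering Remark 13.1 of \cite{SW22}) to promote $\X_d$-genericity to $\XA_d$-genericity along the base point $e_f$ in the finite-adelic factor. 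Combined with the cross-section, this gives, for a.e.\ $\theta$, that the empirical measure on time visits of $a_t\tilde\Lambda_\theta$ to $\sed$ converges tightly to $\mu_\sed/\zeta(d)$.

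Fourth, I would translate time visits back to the Diophantine side. The key bookkeeping is that each time visit of $a_t\tilde\Lambda_\theta$ to $\sed$ up to time $T$ corresponds to an $\e^+$-approximate with $q\leq e^T$, with discrepancy controlled by the upper bound from Theorem \ref{thm:N^sharp(T,theta)} of Section \ref{sec: Upper Bound for Counting}; this shows the set of ``missed'' approximates is $o(T)$ and hence negligible since here $k+r-1=1$. Unpacking the definitions of $\lambda_d$, $\proj$, $\disp$ on a time visit shows that the four-tuple in \eqref{eq: main thm SW22} is exactly the image of the visit point under the identification $\sed\to \E_d^d\times\Sphere^m\times(0,\e^d)\times\hZp^d$. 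The main obstacle, in my view, is precisely the discrepancy step: producing a cross-section whose time visits capture \emph{all but negligibly many} approximates, so that the Birkhoff count on $\sed$ and the true approximate count agree to leading order; this is exactly the ``weaker property'' the authors highlight in Section \ref{sec:keying}, and the reason the uniform upper bound of Section \ref{sec: Upper Bound for Counting} is indispensable.
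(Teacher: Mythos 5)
Your proposal is correct and follows essentially the same route as the paper: there this statement is quoted as [\cite{SW22}, Thm.\ 2.1] and is observed to be the $k=r=n=1$ specialization of Theorem \ref{main thm}, obtained by identifying $q_l^{1/d}(p_l+q_l\theta)$ with the triple $(\proj(\theta,p_l,q_l),\disp(\theta,p_l,q_l))\in\Sphere^m\times\Sphere^0\times\R$ and recombining $\mu^{(\Sphere^m)}$ with the uniform error measure into Lebesgue measure on the $\e$-ball. Your re-sketch of the cross-section, adelic lift, Birkhoff genericity and upper-bound counting is precisely the paper's own proof of Theorem \ref{main thm} run in this special case, so the argument is the same in substance.
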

Now, it is easy to see that Theorem \ref{main thm} (in the special case of $k=r=n=1$) generalises the above theorem. To see this, note that the study of the distribution of $q_l^{1/d} (p_l + q_l \theta) \in \R^m$ is the same as studying the distribution of 
{$$
\left( \frac{p_l+ q_l \theta}{\|p_l+ q_l \theta\|}, \frac{q_l}{|q_l|}, q_l \|p_l+ q_l \theta\|^d \right) = (\proj(\theta,p,q), \disp(\theta,p,q)) \in \Sphere^m  \times \Sphere^1 \times \R.
$$}
Thus, Theorem \ref{main thm}, when viewed as a counting result with constraints, clearly generalises the above theorem. 

\begin{rem}
    The conclusion of \cite[Thm.~2.1]{SW22} for best approximates of Lebesgue a.e. $\theta$ is similarly generalised in our companion paper \cite{AG24Levy}; see also \cites{AG25DL, AG25ELK}.
\end{rem}


\section{Notations}
\label{sec: Notation}

{ Throughout the paper, we will work in both real and adelic settings, and several notions will be used in both contexts. We begin by recording a basic measure-theoretic concept that will be invoked repeatedly, namely that of \emph{Jordan measurability} with respect to a given measure. After this, we introduce the notation for the real and adelic homogeneous spaces appearing in our arguments. For the reader's convenience, a table of notation is provided at the end of the paper.}
\begin{defn}
\label{defJM}
    Let $X$ be a locally compact second countable space and $\nu$ be a finite regular Borel measure on $X$. We say that $E \in \mathcal{B}_X$ is \textit{Jordan measurable with respect to $\nu$} (abbreviated $\nu-$JM) if $\nu(\partial_X(E))=0$.
\end{defn}

The following simple property will be useful.

\begin{lem}
\label{JMintersect}
    If $E,F \subset X$ are $\nu$-JM, then $E \cap F$ is also a $\nu$-JM subset of $X$.
\end{lem}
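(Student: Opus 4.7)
The plan is to reduce the lemma to the standard point-set topology identity
$$\partial_X(E\cap F)\subseteq \partial_X(E)\cup \partial_X(F),$$
after which monotonicity and subadditivity of $\nu$ finish the job immediately. So the proof naturally splits into two steps: (i) verify the inclusion above, and (ii) apply the measure.

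For step (i), I would argue by contrapositive on a point $x$. Suppose $x\notin \partial_X(E)\cup\partial_X(F)$. Then $x$ lies either in the interior or in the exterior of each of $E$ and $F$. If $x$ is in the exterior of $E$, there is an open neighbourhood of $x$ disjoint from $E$, hence disjoint from $E\cap F$, so $x\notin \overline{E\cap F}$ and in particular $x\notin \partial_X(E\cap F)$; the symmetric argument handles the case where $x$ is exterior to $F$. The remaining case is that $x$ lies in $\mathrm{int}(E)\cap\mathrm{int}(F)=\mathrm{int}(E\cap F)$, which again excludes $x$ from $\partial_X(E\cap F)$. This gives the claimed inclusion.

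For step (ii), using the hypothesis $\nu(\partial_X(E))=\nu(\partial_X(F))=0$, monotonicity of $\nu$ and countable (in fact, finite) subadditivity yield
$$\nu(\partial_X(E\cap F))\le \nu(\partial_X(E)\cup \partial_X(F))\le \nu(\partial_X(E))+\nu(\partial_X(F))=0,$$
so $E\cap F$ is $\nu$-JM.

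There is no real obstacle here; the only substantive content is the topological inclusion in step (i), which is a routine case analysis. I would present the argument in just a few lines, flagging that exactly the same reasoning shows that finite unions and set differences of $\nu$-JM sets are $\nu$-JM, so the collection of $\nu$-JM sets forms an algebra — a fact that will likely be invoked implicitly elsewhere in the paper.
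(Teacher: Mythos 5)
Your proof is correct, and it is exactly the routine argument the paper has in mind: the paper states the lemma without proof ("the following lemma is immediate"), and the standard justification is precisely your inclusion $\partial_X(E\cap F)\subseteq\partial_X(E)\cup\partial_X(F)$ followed by subadditivity of $\nu$. Nothing is missing, and your closing remark that the $\nu$-JM sets form an algebra is a harmless, accurate aside.
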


\subsection{Real Setup}
\label{sec: Notation real}
{In this brief section, we introduce the real machinery that will be required throughout the paper.} Let us define $G = \SL_d(\R)$, $\Gamma = \SL_d(\Z)$, and $\X_d = G / \Gamma$. The space $\X_d$ is isomorphic to the space of unimodular lattices in $\R^d$, where the isomorphism is given by
\[
g \Gamma \mapsto g \Z^d.
\]
Since $\Gamma$ is a lattice in $G$, we may equip $\X_d$ with the unique $G$-invariant Siegel-Haar probability measure $\mu_{\X_d}$.

\medskip
{For \( t = (t_1, \ldots, t_{k+r-1}) \in \R^{k+r-1} \), we define the diagonal matrix \( a_t \in G \) by
\begin{equation}
    \label{defatt}
    a_t = \begin{pmatrix}
        e^{t_1} I_{m_1} & & & & & & \\
        & \ddots & & & & & \\
        & & e^{t_k} I_{m_k} & & & & \\
        & & & e^{-t_{k+1}} I_{n_1} & & & \\
        & & & & \ddots & & \\
        & & & & &  e^{-t_{k+r}} I_{n_r}
    \end{pmatrix},
\end{equation}
where
\[
    t_{k+r} = \frac{m_1 t_1 + \cdots + m_k t_k - n_1 t_{k+1} - \cdots - n_{r-1} t_{k+r-1}}{n_r}.
\]}\\

We will borrow some notation from Section 8 of \cite{SW22}. Given a unimodular lattice $\Lambda$, we define the set of all primitive vectors in $\Lambda$ by $\Lambda_\prim$. Given a subset $W \subset \R^d$ and $l \geq 1$, we define 
\begin{equation}
    \label{defX(W,l)}
    \X_d(W,l) := \{\Lambda: \# (\Lambda_{\prim} \cap W) \geq l\}.
\end{equation}
For $l = 1$, we omit $l$ and denote 
\begin{equation}
    \label{defX(W)}
    \X_d(W):= \X_d(W,1).
\end{equation}
We also define 
\begin{equation}
    \label{defXsharp}
    \X_d^\sharp(W):= \X_d(W) \setminus \X_d(W,2).
\end{equation}
{There is a natural map 
\begin{equation}
    \label{defv-unique}
    \vector: \X_d^\sharp(W) \rightarrow W \text{,  defined by  } \{\vector(\Lambda)\} =\Lambda_\prim \cap W.
\end{equation}}
With this notation, we are now ready to state the following results from \cite{SW22}.
\begin{lem}[{\cite[Lem.~8.1]{SW22}}]
\label{SW22 Lem 8.1}
    Let $W \subset  \R^d$ be a compact set, $V \subset W$ a relatively open subset and $l \geq 1$ an integer. 
    \begin{itemize}
        \item The set $\X_d(W,l)$ is closed in $\X_d$.
        \item The set $\X_d^{\sharp}(W) \cap \X_d(V)$ is open in $\X_d(W)$.
        \item The map $\vector : \X_d^\sharp(W) \rightarrow W$ is continuous.
    \end{itemize}
\end{lem}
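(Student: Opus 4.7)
The plan is to exploit two standard continuity properties of lattice convergence in $\X_d$: first, if $\Lambda_n \to \Lambda$ in $\X_d$ and $v_n \in \Lambda_n$ with $v_n \to v$, then $v \in \Lambda$; and second, under these conditions primitivity is preserved in the limit, i.e.\ if each $v_n$ is primitive in $\Lambda_n$ and $v_n \to v$, then $v$ is primitive in $\Lambda$. The first of these is immediate from writing $\Lambda_n = g_n \Z^d$ with $g_n \to g$. The second follows by a systole argument: if $v = kw$ with $w \in \Lambda$ and $k \geq 2$, take $w_n \in \Lambda_n$ with $w_n \to w$; then $v_n - kw_n \in \Lambda_n$ tends to $0$, but since $\Lambda_n \to \Lambda$ the systoles of $\Lambda_n$ are bounded below by half the systole of $\Lambda$ for large $n$, forcing $v_n = kw_n$ and contradicting primitivity. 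These two facts will drive all three bullet points.

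For the closedness of $\X_d(W,l)$, I would take $\Lambda_n \to \Lambda$ with $\Lambda_n \in \X_d(W,l)$, pick $l$ distinct primitive vectors $v_n^{(1)}, \ldots, v_n^{(l)} \in \Lambda_n \cap W$, and use compactness of $W$ to pass to a subsequence along which each $v_n^{(i)}$ converges to some $v^{(i)} \in W \cap \Lambda$. The limits are primitive in $\Lambda$ by the preservation fact above. To see the $v^{(i)}$ remain distinct, observe that if $v^{(i)} = v^{(j)}$ with $i \neq j$, then $v_n^{(i)} - v_n^{(j)} \in \Lambda_n \setminus \{0\}$ converges to $0$, contradicting the uniform lower bound on the systoles of $\Lambda_n$ for large $n$.

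For the openness of $\X_d^\sharp(W) \cap \X_d(V)$ in $\X_d(W)$, I would argue by contradiction: suppose $\Lambda \in \X_d^\sharp(W) \cap \X_d(V)$ with unique primitive vector $v \in V$, but there is a sequence $\Lambda_n \to \Lambda$ in $\X_d(W)$ with $\Lambda_n \notin \X_d^\sharp(W) \cap \X_d(V)$. Either $\Lambda_n$ has two distinct primitive vectors in $W$, or $\Lambda_n$'s only primitive vector(s) in $W$ all lie in $W \setminus V$. In the first case, two sequences of primitive vectors in $W$ must (after passing to a subsequence) both converge to $v$ by uniqueness, and again the systole argument rules this out. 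In the second case, compactness of $W \setminus V$ (which is closed in $W$ since $V$ is relatively open) gives a primitive limit vector of $\Lambda$ in $W \setminus V$, contradicting $v \in V$.

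The continuity of $v : \X_d^\sharp(W) \to W$ then follows by the same scheme: for $\Lambda_n \to \Lambda$, any subsequential limit of $v(\Lambda_n) \in W$ is a primitive vector of $\Lambda$ in $W$, hence equals $v(\Lambda)$ by uniqueness, so the whole sequence converges. The conceptually delicate step, which I expect to be the only real obstacle, is the preservation of primitivity under taking limits; everything else is bookkeeping with compactness and the systole lower bound. Once that lemma is isolated and proved, all three bullets fall out in the form sketched above.
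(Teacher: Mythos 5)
Your proof is correct. The paper does not actually prove this lemma — it is imported verbatim from Shapira--Weiss \cite{SW22}, Lemma 8.1 — and the argument there is essentially the one you give: writing converging lattices as $g_n\Z^d$ with $g_n\to g$, using compactness of $W$ (and of $W\setminus V$) to extract convergent sequences of primitive vectors, and using the uniform lower bound on the systole of the $\Lambda_n$ both to preserve primitivity and distinctness in the limit and to rule out two primitive vectors collapsing onto the unique one, exactly as in your three bullet arguments.
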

\begin{lem}[{\cite[Lem.~8.2]{SW22}}]
\label{Sw22 Lem 8.2}
    Let $W \subset \R^d$ be an open set. For any $l \geq 1$, $\X_d(W,l)$ is open in $\X_d$.
\end{lem}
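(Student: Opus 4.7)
The plan is to exploit the homogeneous-space description $\X_d = G/\Gamma$ (with $G = \SL_d(\R)$, $\Gamma = \SL_d(\Z)$) to translate an openness statement about lattices into an openness statement about the action of $G$ on points of $\R^d$, and then use the openness of $W$ together with continuity of the $G$-action.

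Fix an arbitrary $\Lambda_0 \in \X_d(W,l)$. By definition there exist $l$ distinct primitive vectors $v_1,\dots,v_l \in \Lambda_0 \cap W$. Since $W$ is open in $\R^d$, we may choose open neighborhoods $U_i \subset W$ of each $v_i$, and since the $v_i$ are distinct we may shrink these so that the $U_i$ are pairwise disjoint. The topology on $\X_d$ is such that a basis of neighborhoods of $\Lambda_0$ is given by sets of the form $\{g\Lambda_0 : g \in \mathcal{V}\}$ where $\mathcal{V}$ is a neighborhood of the identity in $G$. Because the map $G \times \R^d \to \R^d$, $(g,v)\mapsto gv$, is continuous, we can choose a neighborhood $\mathcal{V}$ of the identity small enough that $g v_i \in U_i \subset W$ for every $g \in \mathcal{V}$ and every $i = 1,\dots,l$.

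It remains to observe two things. First, for any $g \in G$ the linear bijection $g : \Lambda_0 \to g\Lambda_0$ sends primitive vectors to primitive vectors: if $gv_i = k w$ for some $w \in g\Lambda_0$ and some integer $k \geq 2$, then $v_i = k(g^{-1}w)$ with $g^{-1}w \in \Lambda_0$, contradicting primitivity of $v_i$. Second, since $g$ is injective the vectors $g v_1,\dots,g v_l$ remain pairwise distinct, and by construction they all lie in $W$. Hence $g\Lambda_0 \in \X_d(W,l)$ for every $g \in \mathcal{V}$, so $\mathcal{V}\cdot\Lambda_0$ is an open neighborhood of $\Lambda_0$ contained in $\X_d(W,l)$.

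There is essentially no hard step here; the argument is a straightforward continuity-of-action plus preservation-of-primitivity computation. The only place one must take a little care is in choosing the $U_i$ disjoint and small enough before selecting $\mathcal{V}$, so that the distinctness of the $g v_i$ is automatic and no vector is lost from $W$ under the perturbation by $g$. Since $\Lambda_0 \in \X_d(W,l)$ was arbitrary, the set $\X_d(W,l)$ is open in $\X_d$.
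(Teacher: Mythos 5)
Your proof is correct. The paper states this lemma as a citation of \cite{SW22} (Lem.\ 8.2) without reproducing a proof, and your argument—perturb $\Lambda_0$ by a small $g\in G$ using that $\mathcal{V}\Lambda_0$ gives a neighborhood basis, note that the $l$ chosen primitive vectors move continuously and hence stay in the open set $W$, and that $g$ preserves primitivity and distinctness—is exactly the standard argument behind the cited result.
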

\begin{lem}[{\cite[Lem.~8.8]{SW22}}]
\label{SW22Lem8.8}
    Let $L \subset \SL_d(\R)$ be a closed subgroup, with left Haar measure $m_L$. Let $\Lambda_0 \in \X_d$ be such that $L\Lambda_0$ is a closed orbit supporting a finite $L$-invariant measure {$\mu_{L\Lambda_0}$}. Let $W \subset \R^d$ such that for any $v \in W$, there is a $\delta>0$ such that $$m_{L}(\{g \in B_\delta(L): g.v \in W\})=0,$$ (where $B_\delta(L)$ denotes the $\delta$-ball around the identity element of $L$, with respect to some metric inducing the topology). Then $\mu_{L\Lambda_0}(\X_d(W))=0.$
\end{lem}

\subsection{Adelic Setup}
\label{ sec: Adelic Setup}
In this brief section, we introduce an adelic machinery that lies at the heart of incorporating congruence conditions into Diophantine analysis. Our presentation here is heavily influenced by Shapira and Weiss \cite{SW22}, and in particular, we follow their notation. 

Let $\Primes$ {denote} the set of rational primes. Let $\A= \R \times \A_f= \R \times \prod_{p \in \Primes}'\Q_p$ {denote} the ring of adeles. Here $\prod'$ stands for the restricted product, i.e, a sequence $\underbar{$\beta$}= (\beta_\infty, \beta_f)= (\beta_\infty,\beta_2, \beta_3 \ldots, \beta_p, \ldots )$ belongs to $\A$ if and only if $\beta_p \in \Z_p$ for all but finitely many $p$. As suggested by the notation, we denote the real coordinate of a sequence $\underbar{$\beta$} \in \A$ by $ \beta_\infty$ and the sequence of $p$-adic coordinates by $\beta_f= (\beta_p)_{p \in \Primes}.$ The rational numbers $\Q$ are embedded in $\A$ diagonally, that is, $q \in \Q$ is identified with the constant sequence $(q,q, \ldots)$. We let $\SL_d(\A)= \SL_d(\R) \times \SL_d(\A_f)= \SL_d(\R) \times \prod_{p \in \Primes}'\SL_d(\Q_p)$ and use similar notation $(g_\infty, g_f)= (g_\infty, (g_p)_{p \in \Primes })$ to denote elements of $\SL_d(\A).$ By a theorem of Borel, the diagonal embedding of $\SL_d(\Q)$ in $\SL_d(\A)$ is a lattice in $\SL_d(\A)$. Let $$K_f:= \prod_{p \in \Primes } \SL_d(\Z_p)$$ and $$\pi_f: \SL_d(\A) \rightarrow \SL_d(\A_f), \ \pi_f(g_\infty, g_f):=g_f.$$ Then $K_f$ is a compact open subgroup of $\SL_d(\A_f).$ Via the embedding {$\SL_d(\A_f) \simeq \{e\} \times \SL_d(\A_f)$} we also think of $K_f$ as a subgroup of $\SL_d(\A)$. We shall use the following two basic facts (see \cite[Chap.~7]{PR94}):
\begin{itemize}
    \item[(i)] The intersection $K_f \cap \pi_f(\SL_d(\Q)) $ is equal to $\pi_f(\SL_d(\Z))$.
    \item[(ii)] The projection $\pi_f(\SL_d(\Q))$ is dense in $\SL_d(\A_f)$.
\end{itemize}
Let $$\XA_d= \SL_d(\A)/\SL_d(\Q), $$ and let $\mu_{\XA_d}$ denote the $\SL_d(\A)$-invariant probability measure on $\XA_d$. There is a natural projection $\pi: \XA_d \rightarrow  \X_d$, which can be described as follows: Given $\tilde x = (g_\infty, g_f)\SL_d(\Q) \in \XA_d$, using $(ii)$ and the fact that $K_f$ is open, we may replace the representative $(g_\infty, g_f)$ by another representative $(g_\infty \gamma, g_f\gamma)$, where $\gamma \in \SL_d(\Q)$ is such that $g_f\gamma \in K_f$. We then define $\pi(\tilde x)= g_\infty \gamma \SL_d(\Z)$. This is well defined since, if $g_f\gamma_1, g_f \gamma_2 \in K_f$, then by $(i)$, $\pi_f(\gamma_1^{-1}\gamma_2) \in K_f \cap \pi_f(\SL_d(\Q))= \pi_f(\SL_d(\Z))$, and so $g_\infty\gamma_1 \SL_d(\Z)= g_\infty\gamma_2 \SL_d(\Z)$. 

It is clear that $\pi$ intertwines the actions of $G_\infty:= \SL_d(\R)$ on $\XA_d$ and $\X_d.$ Since there is a unique $G_\infty$-invariant probability measure on $\X_d$, we have $\pi_*(\mu_{\XA_d})= \mu_{\X_d}$. In particular, the  group $\{a_{t}\} \subset G_\infty$ defined as in \eqref{defatt}
acts on both of these spaces, and $\pi$ is a factor map for these actions.

\section{Birkhoff Genericity under \texorpdfstring{$a_t$}{at}}
\label{sec:Birkhoff Genericity}
{The main aim of this section is to study the Birkhoff genericity of $\tilde \Lambda_\theta \in \XA_d$ under $a_t$, where $\tilde \Lambda_\theta$ is defined as 
\begin{align}
\label{eq: def tilde Lambda}
    \tilde \Lambda_\theta = \left( \begin{pmatrix}
    I_m & \theta \\ & I_n
\end{pmatrix}, e_f \right) \SL_d(\Q) \in \XA_d,
\end{align}
and $\theta$ varies in $\Mat$. To formalise this, we begin with the following definition.}

\begin{defn}
\label{def: generic flow}
     We will say that $x \in \X_d$ (resp. $\XA_d$) is $(a_t, \mu_{\X_d})$-generic  (resp. $(a_t, \mu_{\XA_d})$-generic) if
    $$
    \lim_{T \rightarrow \infty} \frac{1}{m_{\R^{k+r-1}}(J^T)} \int_{J^T} \delta_{a_{t}x}\, dt= \mu_{\X_d} \text{ (resp. } \mu_{\XA_d}),
    $$
    with respect to the tight topology,  where {$J^T$} is defined as 
    \begin{align}
    \label{eq: def J^T}
        J^T= \{(t_1, \ldots, t_{k}, s_1, \ldots, s_{r-1}) \in (\R_{\geq 0})^{k+r-1} : \substack{ s_i \leq T \text{ for all } i \\ 0 \leq m_1t_1 + \cdots+ m_kt_k- n_1s_1 -\cdots- n_{r-1}s_{r-1} \leq n_rT} \}.
    \end{align}
    Note that
    \begin{align}
        \label{eq: measure of J T}
        m_{\R^{k+r-1}}(J^T)= \frac{T^{k+r-1}c_{k+r-1}(n_1, \ldots, n_r)}{m_1 \cdots n_{r-1}. (k+r-1)!},
    \end{align}
    where $c_{k+r-1}(n_1, \ldots, n_r)$ is defined as in \eqref{eq: def c k r 1}; {see Appendix~\ref{sec: measure of JT}.}
\end{defn}
\vspace{0.2in}
{The following proposition constitutes the main result of this section.}
\begin{prop}
\label{cor1 Generic}
     For Lebesgue almost every $\theta \in \M_{m \times n}(\R)$, the coset $\Tilde{\Lambda}_\theta$ is $(a_{t}, \mu_{\XA_d})$-generic.
\end{prop}

{We start by first proving an effective equidistribution of the orbit of $\Lambda_\theta$ (defined as in \eqref{eq:def lambda theta}) in ${\X_d}$.}
\begin{thm}
\label{thm: Effective Equidistribution}
    Let {$f \in \R . \mathbf{1} + C_c^\infty(\X_d)$} and $\e >0$ be given. Then for Lebesgue almost every $\theta \in \M_{m \times n}(\R)$, we have 
     {\begin{align*}
        \frac{1}{m_{\R^{k+r-1}}(J^T)}  \int_{t \in J^T} f(a_t \Lambda_\theta) dt = \int_{\X_d} f \, d\mu_{\X_d} +  \begin{cases}
            {o}(T^{-1} (\log T)^{\frac{3}{2}+r+\e}), \text{ if } k+r > 2\\
             {o}(T^{-1/2} (\log T)^{\frac{3}{2}+\e}), \text{ if } k+r = 2.
             \end{cases}
     \end{align*}}
\end{thm}

\begin{rem}
   Theorem \ref{thm: Effective Equidistribution} for $k+r=2$, i.e, $r=k=1$ was proved in \cite[Thm.~3.1]{KSW}.
\end{rem}{\begin{rem}
The notation $\R \cdot \mathbf{1} + C_c^\infty(\X_d)$ denotes the set of all functions on $\X_d$ that can be written as the sum of a constant and a compactly supported smooth function. That is, each element is of the form
\[
f(x) = c + \phi(x),
\]
where $c \in \R$ and $\phi \in C_c^\infty(\X_d)$.
\end{rem}}

\vspace{0.2in}

{
The key ingredient in the proof of Theorem \ref{thm: Effective Equidistribution} is Theorem \ref{Pointwise Main Lemma} along with the following two theorems.
\begin{thm}
\label{KM1 Thm} 
    There exists $\delta_0 > 0$ such that for every $f \in \R.\one + \Cc(\X_d)$, we have
    \begin{align}
        \int_{\M_{m \times n}((0,1))} f(a_t \Lambda_\theta) \, d\theta = \int_{\X_d} f(x) d\mu_{\X_d}(x) + O_f(e^{-\delta_0 \lfloor \tilde t \rfloor})
    \end{align}
    for all $t \in (\R_+)^{k+r-1}$, where $\tilde t= (t, \frac{m_1t_1 + \cdots + m_kt_k- n_1 t_{k+1} - \cdots - n_{r-1}t_{k+r-1}}{n_r})$ and for any $x \in \R^{k+r}$ we define $\lfloor x \rfloor= \min\{|x_1|, \ldots, |x_{k+r}|\}$. The implicit constants are independent of $t$.
\end{thm}
\begin{proof}
By \cite[Thm.~1.3]{KM1}, for every $f\in\mathbb{R}\cdot\mathbf{1}+\Cc(\X_d)$ and
$\phi\in\Cc(\M_{m\times n}(\mathbb{R}))$ we have
\begin{align}
\label{eq: azx 1}
\int_{\M_{m\times n}(\mathbb{R})}
\phi(\theta)\,f(a_t\Lambda_\theta)\,d\theta
=
\int_{\X_d} f(x)\,d\mu_{\X_d}(x)
+
O_f\!\left(e^{-\delta_0\lfloor\tilde t\rfloor}\right)
\end{align}
for all $t\in(\mathbb{R}_+)^{k+r-1}$.

Let $\phi_0\in C_c^\infty(\Mat)$ be a smooth function with
$\int_{\Mat}\phi_0=1$, and define
\[
\phi=\phi_0 * \ind_{\M_{m\times n}((0,1))}.
\]
Then $\phi\in C_c(\M_{m\times n}(\mathbb{R}))$ and satisfies
$$
\sum_{\xi \in \M_{m \times n}(\Z)} \phi(\theta + \xi) = 1 \qquad \text{for all $\theta \in \Mat$.}
$$
Now since
\[
f(a_t\Lambda_\theta)=f(a_t\Lambda_{\theta+\xi})
\qquad
\text{for all }\xi\in\M_{m\times n}(\mathbb{Z}),
\]
we get that
\begin{align}
    \label{eq: azx 2}
    \int_{\M_{m\times n}(\mathbb{R})}
\phi(\theta)f(a_t\Lambda_\theta)\,d\theta
=
\int_{\M_{m\times n}((0,1))}
f(a_t\Lambda_\theta)\,d\theta .
\end{align}
Substituting \eqref{eq: azx 2} into \eqref{eq: azx 1} yields the desired result. Hence proved.
\end{proof}

\begin{thm}[{\cite[Thm.~1.1]{BG21}}] 
\label{BG21 Thm}
    There is a $\delta_0 >0$  such that for all $f,g \in \R.\one + \Cc(\X_d)$ such that 
    $$
    \int_{\M_{m \times n}((0,1))} f(a_{{s}} \Lambda_\theta) g(a_{{t}} \Lambda_\theta)d\theta= \left(\int_{\X_d}f (x) d\mu_{\X_d}(x)\right) \left(\int_{\X_d} g(x) d\mu_{\X_d}(x) \right)+ O_{f,g}(e^{ -\delta_0 \min \{ \lfloor \tilde{s} \rfloor, \lfloor \tilde{t} \rfloor, \|\tilde s- \tilde t\|_\infty  \}}),
    $$
    for all ${s},{t} \in (\R_+)^{k+r-1} $, where the implied constant is independent of $s,t$.
\end{thm}

\begin{proof}[Proof of Theorem \ref{thm: Effective Equidistribution}]
Fix $f \in \R.\one + \Cc(\X_d)$. Then by Theorem \ref{KM1 Thm} and \ref{BG21 Thm}, we have 
\begin{align*}
    &\int_{\M_{m \times n}((0,1))} \left(f(a_t \Lambda_\theta) - \int_{\X_d} f(x) d\mu_{\X_d}(x) \right) \left(f(a_s \Lambda_\theta) - \int_{\X_d} f(x) d\mu_{\X_d}(x) \right) \, d\theta  \\
    &= \int_{\M_{m \times n}((0,1))} f(a_t \Lambda_\theta) f(a_s \Lambda_\theta) \, d\theta - \left(\int_{\X_d} f(x) d\mu_{\X_d}(x) \right)^2 \\
    &- (\int_{\X_d}f (x) d\mu_{\X_d}(x)) \left( \int_{\M_{m \times n}((0,1))} f(a_t \Lambda_\theta) \, d\theta - \int_{\X_d} f  (x) d\mu_{\X_d}(x)\right) \\
    &- (\int_{\X_d}f (x) d\mu_{\X_d}(x)) \left( \int_{\M_{m \times n}((0,1))} f(a_s \Lambda_\theta) \, d\theta - \int_{\X_d} f (x) d\mu_{\X_d}(x) \right)\\
    &= O_{f}(e^{ -\delta \min \{ \lfloor {\tilde s} \rfloor, \lfloor { \tilde t} \rfloor, \|\tilde s- \tilde t\|_\infty  \}}).
\end{align*}
Now by Theorem \ref{Pointwise Main Lemma}, the result follows. 
\end{proof}}
\vspace{0.2in}

{Theorem~\ref{thm: Effective Equidistribution} implies the following corollary, about Birkhoff genericity of $ \Lambda_\theta \in {\X_d}$ under $a_t$, for Lebesgue almost every $\theta \in \Mat$.}

\begin{cor}
\label{cor: Birkhoff genericity}
    For Lebesgue almost every $\theta \in \M_{m \times n}(\R)$, {the lattice} ${\Lambda}_\theta$ is  $(a_{t}, \mu_{\X_d})$-generic.
\end{cor}
\begin{proof}
    By Theorem \ref{thm: Effective Equidistribution}, we have that for a.e. $\theta \in \M_{m \times n}(\R)$
\begin{align}
\label{eq:d1}
    \frac{1}{m_{\R^{k+r-1}}(J^T)}  \int_{t \in J^T} f(a_t \Lambda_\theta) dt \rightarrow \int_{\X_d} f,
\end{align}
as $T \rightarrow \infty.$  Taking a countable dense subset of $\R.\one +\Cc(\X_d)$ and using a density argument, we can make sure that \eqref{eq:d1} holds for all $\theta$ in a fixed co-null subset of $\M_{m \times n}(\R)$ and for all $f \in \R. 1 + C_c(\X_d)$, {which proves the corollary.}
\end{proof}
\vspace{0.2in}

{The following lemma proves the equivalence of the Birkhoff genericity of $\Lambda_\theta$ in ${\X_d}$ to the Birkhoff genericity of $\Tilde{\Lambda}_\theta$ in $\XA_d$.}
\begin{lem}
\label{cor: Adele gen iff gen}
    Given $\theta \in \M_{m \times n}(\R)$, then $\Lambda_\theta$ is $(a_t, \mu_{\X_d})$-generic {if and only if} $\Tilde{\Lambda}_\theta$ is $(a_t, \mu_{\XA_d})$-generic.
\end{lem}
\begin{proof}
    It is well known that $\mu_{\XA_d}$ is an $\{a_{t}: t \in \R^{k+r-1} \}$-ergodic measure (see for instance \cite[Lem.~7.6]{SW22}). The corollary now follows by applying Theorem \ref{thm: Generecity in compact extension}, with $Y= \XA_d$, $m_Y= \mu_{\XA_d}$, $K=K_f$, $H= \{a_t: t\in \R^{k+r-1}\}$ and the sequence of measures
    $$
    \mu_{T}= \frac{1}{m_{\R^{k+r-1}}(J^T)} \int_{J^T} \delta_{a_t\Tilde{\Lambda}_\theta}.
    $$
\end{proof}

\begin{rem}
    In \cite{SW22}, it was asked in Remark 13.1 to find examples of measures $\mu$ on $\R^m$ such that for $\mu$-a.e. $\theta$, $\Tilde{\Lambda}_\theta$ is $(a_t, \mu_{\XA_d})$-generic (for the special case of $k=r=n=1$). Lemma \ref{cor: Adele gen iff gen}, along with results of \cite{SW22} and \cite{solanwieser}, gives a wide range of examples for which the above conclusion holds.
\end{rem}

{\begin{proof}[Proof of Proposition \ref{cor1 Generic}]
    The proposition directly follows from Lemma \ref{cor: Adele gen iff gen} together with Corollary \ref{cor: Birkhoff genericity}.
\end{proof}}

\section{The Cross-section}
\label{sec: The Cross-section}

{In {this section}, we construct a cross-section for a multi-parameter flow in the adelic space $\SL_d(\A)/\SL_d(\Q)$ and associate a natural measure to it. Given the lack of an existing theory for multi-parameter cross-sections, the analysis is somewhat involved. To facilitate this, we begin by constructing a cross-section in the more familiar space $\X_d = \SL_d(\R)/\SL_d(\Z)$, study its properties in detail, and then lift the construction to the adelic setting. The construction and analysis in the real case are presented in Sections~\ref{sec: The Cross-section}, \ref{sec: Explicit formula of the cross-sectional measure}, and \ref{sec:Properties of sed}, while the lifting to the adelic space is carried out in Section~\ref{sec:Adelizing the Setup}.}

 \subsection{Construction of the Cross-section} 
{{For $\e>0$,} we define
\begin{equation}
\label{eq: def Le}
    \Le := \left\{(x,y) \in (\Sphere^{m_1} \times \cdots \times \Sphere^{m_k} \times \Sphere^{n_1} \times \cdots \times \Sphere^{n_{r-1}}) \times \R^{n_r} : \|y\|^{n_r} \leq \e \right\}.
\end{equation}
The cross-section for our multi-parameter flow {$a_{t}$} on the space $\X_d$ is defined as
\begin{equation}
\label{eq:def sed}
    \sed := \X_d(\Le),
\end{equation}
by which we mean that for $\mu_{\X_d}$-almost every $\Lambda \in \X_d$, the set
\[
T_{\e}(\Lambda) := \{t \in \R^{k+r-1} : a_{t} \Lambda \in \sed\}
\]
is discrete. This is formalised in the following lemma.}

\begin{lem}
    For every $\Lambda \in \X_d$, the set $T_{\e}(\Lambda)$ is a discrete set.
\end{lem}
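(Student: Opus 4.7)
The plan is to exploit the structure of $\Le$ to rigidly determine, from any witness vector, the point $t$ that enters $T_\e(\Lambda)$, and then to deduce discreteness via the lattice property of $\Lambda$.

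First I would unpack the condition $a_t \Lambda \in \sed = \X_d(\Le)$: it means there is a primitive vector $v \in \Lambda_{\prim}$, written block-wise as $v=(v_1,\ldots,v_k,w_1,\ldots,w_r) \in \R^{m_1}\times\cdots\times\R^{n_r}$, with $a_t v \in \Le$. Using the explicit form of $a_{(s,t)}$ from \eqref{defatt} and of $\Le$ from \eqref{eq: def Le}, this forces $v_i \ne 0$ and $w_j \ne 0$ for $j<r$, and pins down
\[
s_i = -\log \|v_i\| \quad (1\le i\le k), \qquad t_j = \log \|w_j\| \quad (1\le j\le r-1),
\]
while the last entry is constrained only by the size condition $\|w_r\|^{n_r} e^{n_r f(s,t)} \le \e$, where $f(s,t)=\tfrac{n_1t_1+\cdots+n_{r-1}t_{r-1}-(m_1s_1+\cdots+m_ks_k)}{n_r}$. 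Thus each admissible primitive vector $v \in \Lambda$ produces at most one point $t(v) \in T_\e(\Lambda)$, and if $t(v) = t(v')$ then $\|v_i\|=\|v_i'\|$ and $\|w_j\|=\|w_j'\|$ for $j<r$ — but different $t$'s necessarily come from different witnesses $v$.

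Next I would argue by contradiction: suppose $t^{(\ell)} \to t^*$ is a sequence of pairwise distinct points of $T_\e(\Lambda)$. Pick witnesses $v^{(\ell)} \in \Lambda_{\prim}$; by the previous paragraph these must all be distinct. From the convergence of $s_i^{(\ell)}$ and $t_j^{(\ell)}$ we get that $\|v_i^{(\ell)}\|$ and $\|w_j^{(\ell)}\|$ (for $j<r$) are bounded (and bounded away from $0$); the size condition combined with the convergence of $f(s^{(\ell)},t^{(\ell)})$ bounds $\|w_r^{(\ell)}\|$ as well. Therefore all $v^{(\ell)}$ lie in a fixed compact subset of $\R^d$. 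Since $\Lambda$ is a discrete subgroup of $\R^d$, only finitely many such lattice vectors exist, contradicting the assumption that the $v^{(\ell)}$ are distinct. Hence $T_\e(\Lambda)$ has no accumulation points in $\R^{k+r-1}$ and is discrete.

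I do not anticipate a serious obstacle here; the only point requiring slight care is to check that the witnesses corresponding to \emph{distinct} $t^{(\ell)}$ are genuinely distinct, which is immediate from the rigid formulas above, and to verify that the $w_r$-component is also forced to be bounded. The upper bound $\e$ on $\|w_r\|^{n_r} e^{n_r f(s,t)}$, together with the convergence of $f(s^{(\ell)},t^{(\ell)})$, handles this cleanly, so the lattice discreteness argument applies uniformly in all blocks.
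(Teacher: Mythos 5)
Your proof is correct and rests on the same two ingredients as the paper's argument: the rigidity that a lattice vector $v$ with $a_t v\in\Le$ determines (or at least isolates) the time $t$, plus discreteness of $\Lambda$ to rule out infinitely many witnesses in a bounded region. The paper phrases this locally around a fixed $t$ (via $\bigcap_{\delta>0}\bigcup_{0<\|s'\|<\delta}a_{s'}\Le=\emptyset$ and finitely many relevant $v$), while you run the equivalent compactness/contradiction argument with an accumulating sequence, even yielding the slightly stronger statement that $T_\e(\Lambda)$ has no accumulation points in $\R^{k+r-1}$.
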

\begin{proof}
{Fix $\Lambda \in \X_d$ and let $t \in \R^{k+r-1}$. We argue by contradiction. Suppose then, that there exists a sequence $(s_l) \to 0$ such that $a_{s_l} a_t \Lambda \in \sed$ for all $l$. Then, for each $l$, there exists $v_l \in \Lambda$ such that $a_{s_l} a_t v_l \in \Le$, i.e.,
\[
a_t v_l \in a_{-s_l} \Le.
\]
This implies that
\begin{align}
    \label{ abcdef 1}
    a_t v_l \in \bigcup_{0 < \|s\| < \delta} a_{-s} \Le, \quad \text{for all $\delta > 0$ and correspondingly for all sufficiently large $l$.}
\end{align}
Using the discreteness of $\Lambda$ and the boundedness of the set $\bigcup_{0 < \|s\| < \delta} a_{-s} \Le$, the equation \eqref{ abcdef 1} implies that the sequence $(v_l)$ has a constant subsequence. Replacing $(v_l)$ by this subsequence, we may assume without loss of generality that $(v_l)$ is a constant sequence, say $v$. Then, by \eqref{ abcdef 1}, we have
\[
a_t v \in \bigcap_{\delta > 0} \bigcup_{0 < \|s\| < \delta} a_{-s} \Le = \emptyset,
\]
which is a contradiction. Hence, $T_{\e}(\Lambda)$ is a discrete set.}
\end{proof}

\begin{rem}
{Note that for $\mu_{\X_d}$-almost every lattice $\Lambda$, the set $T_\e(\Lambda)$ is unbounded. This follows from the observation that the union
\begin{align}
\label{eq: set non zero}
    \bigcup_{t \in [0,1]^{k+r-1}} a_t \sed
\end{align}
coincides with the set of all lattices in $\X_d$ that contain a nonzero vector in the set $\bigcup_{t \in [0,1]^{k+r-1}} a_t \Le$. Since this union contains an open subset of $\R^d$, Lemma~\ref{Sw22 Lem 8.2} implies that the set in \eqref{eq: set non zero} has positive $\mu_{\X_d}$-measure. }

{By the ergodicity of the $a_t$-action on $\X_d$, it follows that for $\mu_{\X_d}$-a.e.\ $\Lambda$, the set
\[
\{ t \in \R^{k+r-1} : a_t \Lambda \in \cup_{t \in [0,1]^{k+r-1}} a_t \sed \}
\]
is unbounded. In particular, this implies that $T_\e(\Lambda)$ is unbounded for $\mu_{\X_d}$-almost every $\Lambda \in \X_d$.}
\end{rem}

\subsection{Special subsets of the Cross-section}
We define some special subsets of $\sed$ {that} will be useful later. {First, consider}
\begin{align}
\label{eq:def Lej}
    \Le^+= \{z \in  \Le:  z_d \geq 0\}.
\end{align}
Note that 
\begin{align*}
    \sed= \X_d(\Le)= \X_d(\Le^+),
\end{align*}
{since for any \( z \in \Le \), either \( z \) or \( -z \) lies in \( \Le^+ \). }

{Define also}
\begin{align}
    \ssed&= \X_d^\sharp (\Le^+).
\end{align}
{By construction,} if \( \Lambda \in \ssed \) and \( v \in \Lambda \cap \Le^+ \), then \( v_d \neq 0 \). {Otherwise, both \( v \) and \( -v \) would lie in \( \Le^+ \), violating the condition that \( \Lambda \in \ssed \), which ensures no such symmetric pair occurs. }
    
{Next, for any \( \delta > 0 \),} define the subsets
\[
(\sed)_{\geq \delta} = \left\{ x \in \sed : \min \left\{ \|t\|_{\infty} : t \in T_{\e}(x) \setminus \{(0,0)\} \right\} \geq 2\delta \right\},
\]
{and}
\[
(\sed)_{< \delta} = \sed \setminus (\sed)_{\geq \delta}.
\]
This definition {ensures} that if \( Y \subset (\sed)_{\geq \delta} \), then the map
\[
(-\delta,\delta)^{k+r-1} \times Y \to \X_d, \quad (t,y) \mapsto a_{t} y,
\]
is injective.

\begin{lem}
\label{muJMsed>a}
For all \( \delta > 0 \), the sets \( (\sed)_{\geq \delta} \) and \( (\sed)_{< \delta} \) are Borel subsets of \( \sed \).
\end{lem}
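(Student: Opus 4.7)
The plan is to write $(\sed)_{<\delta}$ as a countable union of closed sets, and then deduce that $(\sed)_{\geq \delta}$ is Borel as the complement of a Borel set inside the (closed) set $\sed$. The key preliminary observation is that $\sed$ itself is closed: the set $\Le$ in \eqref{eq: def Le} is a product of unit spheres with the closed ball $\{y \in \R^{n_r} : \|y\|^{n_r} \leq \e\}$, hence compact, so by Lemma \ref{SW22 Lem 8.1} the set $\sed = \X_d(\Le)$ is closed, and in particular Borel.

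Next I would unpack the definition. A point $x \in \sed$ fails to lie in $(\sed)_{\geq \delta}$ precisely when there exists $(s,t) \in \R^{k+r-1}$ with $0 < \|(s,t)\|_\infty < 2\delta$ such that $a_{(s,t)}x \in \sed$. Let
\[
B^* = \{(s,t) \in \R^{k+r-1} : 0 < \|(s,t)\|_\infty < 2\delta\},
\]
and exhaust it by the compact sets $K_n = \{(s,t) : 1/n \leq \|(s,t)\|_\infty \leq 2\delta - 1/n\}$ for $n$ large. Define the continuous action map $\phi : \X_d \times \R^{k+r-1} \to \X_d$ by $\phi(x, (s,t)) = a_{(s,t)}x$. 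Since $\sed$ is closed, $F := \phi^{-1}(\sed)$ is a closed subset of $\X_d \times \R^{k+r-1}$.

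Now I would use properness: for each fixed $n$, the projection $\pi_1 : \X_d \times K_n \to \X_d$ is a closed map because $K_n$ is compact. Hence $\pi_1(F \cap (\X_d \times K_n))$ is a closed subset of $\X_d$. Unwinding the definition,
\[
(\sed)_{<\delta} = \sed \cap \bigcup_{n \geq 1} \pi_1\bigl(F \cap (\X_d \times K_n)\bigr),
\]
which exhibits $(\sed)_{<\delta}$ as an $F_\sigma$ subset of $\sed$, hence Borel. Consequently $(\sed)_{\geq \delta} = \sed \setminus (\sed)_{<\delta}$ is also Borel.

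There is no real obstacle here beyond being careful that the exhaustion avoids the origin (so that $(0,0)$ is not counted among the return times) and that the projection is closed, which is immediate from the compactness of each $K_n$. The discreteness of $T_\e(\Lambda)$, established in the preceding lemma, is not strictly needed for Borel measurability but justifies why the minimum in the definition of $(\sed)_{\geq \delta}$ is well-defined (with the convention $\min \emptyset = +\infty$, making the condition trivially satisfied when $T_\e(x) = \{(0,0)\}$).
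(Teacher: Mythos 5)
Your proof is correct, but it follows a somewhat different route from the paper's. The paper transfers the condition into $\R^d$: it forms the set $W_\delta = \bigcup_{0<\|(s,t)\|<2\delta} a_{(s,t)}^{-1}(\Le)$ and writes $(\sed)_{<\delta} = \X_d(\Le) \cap \X_d(W_\delta)$, then appeals to Lemmas \ref{SW22 Lem 8.1} and \ref{Sw22 Lem 8.2} to conclude Borel measurability. You instead stay on the homogeneous space: you use that $\sed=\X_d(\Le)$ is closed (since $\Le$ is compact), pull it back under the continuous action map, exhaust the punctured ball $\{0<\|(s,t)\|_\infty<2\delta\}$ by compact sets $K_n$, and use that projecting along a compact factor is a closed map, exhibiting $(\sed)_{<\delta}$ as an $F_\sigma$ set. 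The two arguments share the same essential mechanism (a compact exhaustion turning an uncountable union of closed conditions into a countable union), but yours avoids the $\X_d(W)$ machinery for sets $W$ that are neither compact nor open; this is actually a point where the paper is terse, since Lemmas \ref{SW22 Lem 8.1} and \ref{Sw22 Lem 8.2} as stated only cover compact and open $W$, and making the paper's step airtight would require essentially the same exhaustion of $W_\delta$ by compact pieces that you carry out upstairs. Your version also yields the slightly stronger conclusion that $(\sed)_{<\delta}$ is $F_\sigma$, and your closing remark that the discreteness of $T_\e(\Lambda)$ is not needed for measurability (only to make the minimum well defined, with $\min\emptyset=+\infty$) is accurate.
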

\begin{proof}
Since \( (\sed)_{\geq \delta} = \sed \setminus (\sed)_{< \delta} \), it {suffices to show} that \( (\sed)_{< \delta} \) is Borel for all \( \delta > 0 \).

Let
{\[
W_\delta = \bigcup_{ 0<\|t\|_{\infty} < 2\delta } a_{t}^{-1}(\Le),
\]}
which is a Borel subset of \( \R^d \). {Then,}
\[
(\sed)_{< \delta} = \left\{ \Lambda \in \sed : \Lambda_\prim \cap W_\delta \neq \emptyset \right\} = \X_d(\Le) \cap \X_d(W_\delta),
\]
which is Borel by Lemmas~\ref{SW22 Lem 8.1} and~\ref{Sw22 Lem 8.2}.
\end{proof}

\subsection{Measure on the cross-section}
\label{sec: The Cross-section: Measure on the cross-section}
For every Borel measurable set \( E \subset \sed \), we define the quantity \( \mu_{\sed}(E) \) by
\begin{equation}
    \label{defmu_S sup}
    \mu_{\sed}(E) := \sup_{\tau > 0} \left\{ \frac{1}{\tau^{k+r-1}} \mu_{\X_d}(E^{I_\tau}) \right\},
\end{equation}
{where 
\begin{align}
    I_\tau &:= [0,\tau]^{k+r-1}, \label{eq: def: I tau} \\
    E^I &:= \{ a_{t} x : t \in I,\ x \in E \}. \label{eq: defE^I}
\end{align}}
This section {is devoted to proving the following proposition, which establishes that \( \mu_{\sed} \) defines a measure on \( \sed \), referred to as a cross-sectional measure.}

\begin{prop}
\label{prop mu is measure}
The map \( \mu_{\sed} \) from {Borel} subsets of \( \sed \) to \( \R_{\geq 0} \), defines a measure on \( \sed \).
\end{prop}
\medskip

{To prove this proposition, we will require the following lemmas.}

\begin{lem}
\label{lem:mu_S_limit}
For every Borel measurable set \( E \subset \sed \), we have
\begin{equation}
    \label{defmu_S}
    \mu_{\sed}(E) = \lim_{\tau \to 0} \frac{1}{\tau^{k+r-1}} \mu_{\X_d}(E^{I_\tau}).
\end{equation}
\end{lem}
\begin{proof}
{The result is trivial} if \( \mu_{\sed}(E) = 0 \), so we assume \( \mu_{\sed}(E) > 0 \).

    Let \( 0 < \e' < \mu_{\sed}(E) \) be arbitrary. We {aim} to show that there exists \( \delta > 0 \) such that for all \( \tau < \delta \),
\[
\frac{1}{\tau^{k+r-1}} \mu_{\X_d}(E^{I_\tau}) > \mu_{\sed}(E) - \e'.
\]
By the definition of the {supremum, there exists} \( \kappa > 0 \) such that
\[
\frac{1}{\kappa^{k+r-1}} \mu_{\X_d}(E^{I_\kappa}) > \mu_{\sed}(E) - \frac{\e'}{2}.
\]
Choose \( l \in \mathbb{N} \) large enough that
\[
\left(1 + \frac{1}{l} \right)^{k+r-1} \leq \frac{\mu_{\sed}(E) - \frac{\e'}{2}}{\mu_{\sed}(E) - \e'},
\]
{and define} \( \delta = \kappa/l \). Now {for any} \( 0 < \tau < \delta \), {there exists} a unique \( t \in \mathbb{N} \), {with} \( t \geq l \), such that 
\[
\tau \in \left( \frac{\kappa}{t+1}, \frac{\kappa}{t} \right].
\] {This implies:}
{\begin{align*}
    \mu_{\sed}(E) - \frac{\e'}{2}
    &\leq \frac{1}{\kappa^{k+r-1}} \mu_{\X_d}(E^{I_\kappa}) \leq \frac{1}{(t\tau)^{k+r-1}} \mu_{\X_d}(E^{I_{(t+1)\tau}}) \\
    &= \frac{1}{(t\tau)^{k+r-1}} \mu_{\X_d} \left( \bigcup_{\substack{0 \leq j_1, \ldots, j_{k+r-1} \leq t \\ j_i \in \Z}} a_{(j_1 \tau, \ldots, j_{k+r-1} \tau)} E^{I_\tau} \right).
\end{align*}
We estimate the measure of the union using the $a_t$-invariance of \( \mu_{\X_d} \):
\begin{align*}
    \mu_{\X_d} \left( \bigcup_{\substack{0 \leq j_1, \ldots, j_{k+r-1} \leq t \\ j_i \in \Z}} a_{(j_1 \tau, \ldots, j_{k+r-1} \tau)} E^{I_\tau} \right)
    &\leq \sum_{\substack{0 \leq j_1, \ldots, j_{k+r-1} \leq t \\ j_i \in \Z}} \mu_{\X_d}(a_{(j_1 \tau, \ldots, j_{k+r-1} \tau)} E^{I_\tau}) \\
    &= (t+1)^{k+r-1} \mu_{\X_d}(E^{I_\tau}).
\end{align*}
Substituting back, we obtain
\begin{align*}
    \mu_{\sed}(E) - \frac{\e'}{2}
    &\leq \left(1 +  \frac{1}{t} \right)^{k+r-1} \left( \frac{1}{\tau^{k+r-1}} \mu_{\X_d}(E^{I_\tau}) \right) \\
    &\leq \left( 1 + \frac{1}{l} \right)^{k+r-1} \left( \frac{1}{\tau^{k+r-1}} \mu_{\X_d}(E^{I_\tau}) \right) \\
    &\leq \frac{\mu_{\sed}(E) - \frac{\e'}{2}}{\mu_{\sed}(E) - \e'} \left( \frac{1}{\tau^{k+r-1}} \mu_{\X_d}(E^{I_\tau}) \right).
\end{align*}
Rearranging this inequality gives
\[
\frac{1}{\tau^{k+r-1}} \mu_{\X_d}(E^{I_\tau}) > \mu_{\sed}(E) - \e',
\]
as desired. This completes the proof.}
\end{proof}

\begin{lem}
    \label{cross sectional measure on restricted set}
    Let $\delta > 0$ be arbitrary. Then {the following holds} for any Borel subset $E \subset (\sed)_{\geq \delta}$ and Borel subset $I \subset [0, \delta]^{k+r-1}$
    \begin{align}
    \label{eq: cross sectional measure on restricted set }
        \mu_{{\X_d}}(E^I)= \mu_{\sed}(E) m_{\R^{k+r-1}}(I).
    \end{align}
\end{lem}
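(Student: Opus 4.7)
The plan is to fix $E \subset (\sed)_{\geq \delta}$ and recognise the set function $\nu_E(I) := m_{\X_d}(E^I)$ on Borel subsets of $[0,\delta]^{k+r-1}$ as a translation-invariant Borel measure, which must then be a scalar multiple of Lebesgue, with the scalar forced to equal $\mu_{\sed}(E)$ by the limit in the previous lemma.

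First I would unpack the injectivity baked into the definition of $(\sed)_{\geq \delta}$: if $y, y' \in E$ and $u, u' \in [0,\delta]^{k+r-1}$ satisfy $a_u y = a_{u'} y'$, then $u - u' \in [-\delta,\delta]^{k+r-1} \cap T_\e(y')$, and since every nonzero element of $T_\e(y')$ has norm at least $2\delta$, this forces $u = u'$ and hence $y = y'$. In particular, $I_1 \cap I_2 = \emptyset$ implies $E^{I_1} \cap E^{I_2} = \emptyset$. Combined with $\sigma$-additivity of $m_{\X_d}$, this will yield $\sigma$-additivity of $\nu_E$ once each $E^I$ is known to be Borel, which follows from the Lusin--Souslin theorem applied to the continuous injective map $\Phi: [0,\delta]^{k+r-1} \times E \to \X_d$, $(u,y) \mapsto a_u y$ (the domain being a standard Borel space).

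Next I would verify translation invariance of $\nu_E$ inside $[0,\delta]^{k+r-1}$. For any $v \in \R^{k+r-1}$ with $I, I+v \subset [0,\delta]^{k+r-1}$ we have $E^{I+v} = a_v E^I$, so the $\SL_d(\R)$-invariance of $m_{\X_d}$ gives $\nu_E(I+v) = \nu_E(I)$. A standard argument then identifies $\nu_E$ as a constant multiple of Lebesgue: partition $[0,\delta]^{k+r-1}$ for each $N \in \N$ into $N^{k+r-1}$ translates of $I_{\delta/N}$, which by translation invariance all carry equal $\nu_E$-mass; by $\sigma$-additivity and the monotone class theorem this forces $\nu_E = c_E \cdot m_{\R^{k+r-1}}$ on $[0,\delta]^{k+r-1}$ for some constant $c_E \geq 0$.

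It remains only to identify $c_E$. Specialising to $I = I_\tau$ with $\tau \to 0$ and invoking the formula $\mu_{\sed}(E) = \lim_{\tau \to 0} \tau^{-(k+r-1)} m_{\X_d}(E^{I_\tau})$ from the previous lemma yields $c_E = \mu_{\sed}(E)$, which upon substitution into $\nu_E(I) = c_E \, m_{\R^{k+r-1}}(I)$ completes the proof. The main obstacle is the Borel measurability of $E^I$: it is the image of a Borel set under a merely injective continuous map, so one cannot appeal to the trivial fact that continuous maps \emph{pull back} Borel sets and must instead invoke Lusin--Souslin (or equivalently Kuratowski's theorem on Borel isomorphisms between standard Borel spaces). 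Everything else is a direct consequence of the injectivity afforded by $E \subset (\sed)_{\geq \delta}$ and the left $G$-invariance of $m_{\X_d}$.
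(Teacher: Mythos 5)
Your argument is correct and is essentially the paper's proof in different packaging: the paper fixes $E$, reduces to half-open boxes, and computes $m_{\X_d}(E^I)$ by the same $N^{k+r-1}$-fold subdivision using the injectivity on $(\sed)_{\geq\delta}$, the $a_t$-invariance of $m_{\X_d}$, and the $\tau\to 0$ limit defining $\mu_{\sed}$ to pin the constant, which is exactly your ``translation-invariant measure is a multiple of Lebesgue'' step plus the identification of $c_E$. The only substantive addition on your side is the explicit justification that $E^I$ is Borel (Lusin--Souslin applied to the injective continuous map $(u,y)\mapsto a_u y$) and of the $\sigma$-additivity of $I\mapsto m_{\X_d}(E^I)$, points the paper leaves implicit.
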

\begin{proof}
Fix a Borel subset $E \subset (\sed)_{\geq \delta}$. {First, note that by injectivity of} the map $(t,x) \mapsto a_tx$ on $(-\delta, \delta)^{k+r-1} \times (\sed)_{\geq \delta}$, it {suffices to prove} that \eqref{eq: cross sectional measure on restricted set } holds for any Borel set {$I \subset [0, \delta]^{k+r-1}$} of the form $[x_1,x_1+y) \times \cdots \times [x_{k+r-1}, x_{k+r-1}+ y)$. {Next, using the invariance of $\mu_{\X_d}$ under $a_t$, and the identity
    $$
     E^{[x_1,x_1+y) \times \cdots \times [x_{k+r-1}, x_{k+r-1}+ y) }  = a_{(x_1, \ldots, x_{k+r-1})} E^{I_y},
    $$
    we reduce to proving \eqref{eq: cross sectional measure on restricted set } for sets of the form $I_y$.}

    {To prove this, fix $y > 0$, and for each $N \in \mathbb{N}$, decompose $I_y$ as a union of $N^{k+r-1}$ many boxes of the form 
    $$
      \left( \frac{l_1y}{N} ,\ldots,  \frac{l_{k+r-1 } y}{N} \right) + I_{y/N},
    $$
    where $l_i \in \{0, \ldots, N-1\}$ for all $i$. This implies
    \begin{align}
        E^{I_y} = \bigcup_{0 \leq l_1, \ldots, l_{k+r-1} \leq N-1} a_{ (l_1y/N,\ldots, l_{k+r-1}y/N)} E^{I_{y/N}}. \label{ eq: aaabb 1}
    \end{align}
Now, using injectivity on $(-\delta, \delta)^{k+r-1} \times (\sed)_{\geq \delta}$, we have
\begin{align}
    \mu_{\X_d} \left( \bigcup_{0 \leq l_1, \ldots, l_{k+r-1} \leq N-1} a_{ (l_1 y/N,\ldots, l_{k+r-1} y/N)} E^{I_{y/N}} \right) 
&= \sum_{0 \leq l_1, \ldots, l_{k+r-1} \leq N-1} \mu_{\X_d} \left( a_{ (l_1 y/N,\ldots, l_{k+r-1} y/N)} E^{I_{y/N}} \right) \nonumber\\
&= N^{k+r-1} \mu_{\X_d}(E^{I_{y/N}}) ,\label{ eq: aaabb 2}
\end{align}
where the last equality follows using $a_t$-invariance of $\mu_{\X_d}$.}

{Finally, combining \eqref{ eq: aaabb 1}, \eqref{ eq: aaabb 2} and taking the  limit as $N \rightarrow \infty$, we obtain
\begin{align*}
\mu_{\X_d}(E^{I_y}) 
&= \lim_{N \rightarrow \infty}N^{k+r-1} \mu_{\X_d}(E^{I_{y/N}}) 
= \lim_{N \rightarrow \infty} \frac{m_{\mathbb{R}^{k+r-1}}(I_y)}{m_{\mathbb{R}^{k+r-1}}(I_{y/N})} \mu_{\X_d}(E^{I_{y/N}}) \\
&= m_{\mathbb{R}^{k+r-1}}(I_y) \, \mu_{\sed}(E), \quad \text{by Lemma \ref{lem:mu_S_limit}.}
\end{align*}
This proves the lemma.}
\end{proof}

\begin{lem}
\label{ lem : claim 2}
For every Borel set $E \subset \sed$, {the following holds:}
{\begin{align}
\label{claim2}
    \mu_{\sed}(E)= \sum_{l=1}^\infty \mu_{\sed}(E(l)),
\end{align} }
where 
\begin{align}
\label{eq: abcccc 1}
    E(l)= \begin{cases}
        E \cap (\sed)_{\geq 1/4} & \text{for } l=1, \\
        E \cap (\sed)_{\geq 1/2^{l+1}} \cap (\sed )_{<1/2^l} & \text{for } l>1.
    \end{cases}
\end{align}
\end{lem}
\begin{proof}
{By the definition of $\mu_{\sed}$, it is clear that }
{\[
\mu_{\sed}(E) \leq \sum_{l=1}^\infty \mu_{\sed}(E(l)).
\]}
{To prove the reverse inequality, observe that for every $N \in \N$,}
{\begin{align*}
    \mu_{\sed}(E) &\geq \mu_{\sed}\left(\bigcup_{l=1}^{N-1}  E(l) \right) \\
    &= 2^{(N(k+r-1)}\mu_{{\X_d}}\left( \left(\bigcup_{l=1}^{N-1} E(l) \right)^{I_{1/2^N}} \right) && \text{(by Lemma \ref{cross sectional measure on restricted set})} \\
    &= 2^{N(k+r-1)} \sum_{l=1}^{N-1}  \mu_{{\X_d}} \left( E(l)^{I_{1/2^N}} \right) && \text{(by disjointness of the sets $E(1)^{I_{1/2^N}}, \ldots, E(N-1)^{I_{1/2^N}}$)} \\
    &= \sum_{l=1}^{N-1}  \mu_{\sed}(E(l)) && \text{(again by Lemma \ref{cross sectional measure on restricted set}).}
\end{align*}}
{Taking the limit as $N \to \infty$ yields the desired inequality, completing the proof.}
\end{proof}

\medskip

\begin{proof}[Proof of Proposition \ref{prop mu is measure}]
The fact that $\mu_{\sed}(\emptyset)=0$ is {immediate}. We now {verify} countable additivity. 

Let $E$ be the disjoint union of sets $(E_i)_{i \in \N}$, {i.e., \( E = \bigsqcup_{i \in \N} E_i \). For each \( i \in \N \), decompose \( E_i = \bigsqcup_{l \in \N} E_i(l) \), and similarly decompose \( E = \bigsqcup_{l \in \N} E(l) \), where the sets $E(l), E_i(l)$ are defined as in \eqref{eq: abcccc 1}. }It is clear that 
\[
E(l) = \bigsqcup_{i \in \N} E_i(l).
\]
{Moreover, Lemma \ref{cross sectional measure on restricted set} implies that}
\[
\mu_{\sed}(E(l)) = \sum_{i \in \N} \mu_{\sed}(E_i(l)).
\]
The result now follows from Lemma \ref{ lem : claim 2} and Fubini’s Theorem.
\end{proof}


\section{Explicit formula of Cross-sectional measure}
\label{sec: Explicit formula of the cross-sectional measure}
{This section provides {an} explicit formula for the cross-sectional measure $\mu_{\sed}$. To that end, we begin by introducing some notation.}

{Recall from Remark \ref{rem:reduction of main thm} that $(\E_d, m_{\E_d})$ denotes the measure space $(\E_d^d, m_{\E_d^d})$. For notational simplicity, let us define
\begin{align}
    \label{eq: def C e}
    \cL_\e = \{y \in \R^{n_r} : y_{n_r} > 0, \ \|y\|^{n_r} \leq \e\}.
\end{align}
We also define the map
{\begin{align}
    \label{eq:def varphi genral}
    \varphi: \E_d \times \Sphere^{m_1} \times \cdots \times &\Sphere^{n_{r-1}} \times \cL_\e \longrightarrow \sed, \\
    \varphi(\Lambda, x, x', \gamma) &= u(x, x', \gamma) \Lambda, \nonumber
\end{align}}
where 
\begin{align}
    \label{eq: def u general}
    u(x, x', \gamma) = 
    \begin{pmatrix}
        \gamma^{\frac{-1}{d-1}} I_{d-1} & \begin{pmatrix} x \\ x' \end{pmatrix} \\
        0 & \gamma
    \end{pmatrix},
\end{align}
{for $x\in \Sphere^{m_1}\times\cdots\times\Sphere^{n_{r-1}}$, $x'\in\R^{n_r-1}$ and $\gamma\in\R_{+}$.}}

{To see that \(\varphi\) is well defined, note that for any \(\Lambda\in\E_d\), the standard basis vector \(e_d\in\Lambda\).  Hence for every 
\[
v=(x,x',\gamma)\in \Sphere^{m_1}\times\cdots\times\Sphere^{n_{r-1}}\times\cL_\e,
\]
we have 
\[
v \;=\; u(v)\,e_d \;\in\; u(v)\,\Lambda,
\]
and since \(v\in\Le\), this shows \(u(v)\Lambda\in\sed\) and therefore the map \eqref{eq:def varphi genral} is well defined.}

\vspace{0.2in}

The main aim of this section is to prove the following proposition.
\begin{prop}
\label{measure equal general}
The measure $\mu_{\sed}$ {is given by} 
{\begin{equation}
    \label{mu_sed measure general}
    \mu_{\sed} = \frac{m_1 \cdots m_k n_1 \cdots n_{r-1}}{\zeta(d)} \cdot \varphi_* \left( m_{\E_d} \times \mu^{(\Sphere^{m_1})} \times \cdots \times \mu^{(\Sphere^{n_{r-1}})} \times m_{\R^{n_r}}|_{\cL_\e} \right).
\end{equation}}
\end{prop}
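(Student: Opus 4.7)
The plan is to evaluate $\mu_{\sed}(E) = \lim_{\tau \to 0} \tau^{-(k+r-1)} m_{\X_d}(E^{I_\tau})$ by unfolding $m_{\X_d}(E^{I_\tau})$ through Siegel's mean-value theorem in a fibered form, followed by a change of variables adapted to the flow $a_{(s,t)}$. The constant $m_1\cdots m_k\,n_1\cdots n_{r-1}$ emerges as the Jacobian of the passage between Cartesian coordinates on $\R^d$ and ``flow $\times$ spherical'' coordinates on $\Le\times I_\tau$. By monotone convergence it suffices to work with $E\subset(\sed)_{\geq\delta}$ and $\tau<\delta$; set $V := \Sphere^{m_1}\times\cdots\times\Sphere^{n_{r-1}}\times\{y\in\R^{n_r}:y_{n_r}>0,\|y\|^{n_r}<\e\}$ so that $V$ is the $\Le$-factor in the domain of $\varphi$.

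\textbf{Step 1: flow-box parametrisation.} Every $\Lambda \in \ssed$ has a unique primitive vector $v_0 \in \Lambda \cap \Lej$, and $\Lambda_0 := u(v_0)^{-1}\Lambda \in \E_d$; hence $\varphi$ is a bimeasurable bijection between $\E_d\times V$ and a full-measure subset of $\ssed$. Let $\tilde{E}\subset \E_d\times V$ be the preimage of $E$. For $\tau < \delta$, injectivity of $(s,t,x)\mapsto a_{(s,t)}x$ on $(-\delta,\delta)^{k+r-1}\times E$ implies that each $\Lambda \in E^{I_\tau}$ has, outside a set of $m_{\X_d}$-measure $o(\tau^{k+r-1})$, a unique primitive vector $v$ in $W_\tau := \bigcup_{(s,t)\in I_\tau} a_{(s,t)}\Lej$. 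Writing the unique factorisation $v = a_{(s(v),t(v))} v_0(v)$ with $(s(v),t(v))\in I_\tau$ and $v_0(v)\in V$, one checks that $\Lambda \in E^{I_\tau}$ iff $(u(v_0(v))^{-1} a_{(-s(v),-t(v))} \Lambda,\, v_0(v)) \in \tilde{E}$.

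\textbf{Step 2: Siegel unfolding.} Apply the fibered Siegel mean-value formula
\[
\int_{\X_d} \sum_{v\in\Lambda_{\prim}} F(v,\,u(v)^{-1}\Lambda)\, dm_{\X_d}(\Lambda) \;=\; \tfrac{1}{\zeta(d)}\int_{\R^d}\int_{\E_d} F(v,\Lambda_0)\, dm_{\E_d}(\Lambda_0)\, dv
\]
with $F(v,\Lambda_0) := \mathbf{1}_{W_\tau}(v)\cdot\mathbf{1}_{\tilde{E}}(h(v)\Lambda_0, v_0(v))$, where $h(v) := u(v_0(v))^{-1} a_{(-s(v),-t(v))} u(v)$. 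A direct check gives $h(v)\bfe_d = \bfe_d$, so $h(v)\in H$. By Step 1 the left-hand side equals $m_{\X_d}(E^{I_\tau}) + o(\tau^{k+r-1})$, while $H$-invariance of $m_{\E_d}$ allows the replacement of $h(v)\Lambda_0$ by $\Lambda_0$ after integration. Therefore
\[
m_{\X_d}(E^{I_\tau}) = \tfrac{1}{\zeta(d)} \int_{W_\tau} \int_{\E_d} \mathbf{1}_{\tilde{E}}(\Lambda_0, v_0(v))\, dm_{\E_d}(\Lambda_0)\, dv + o(\tau^{k+r-1}).
\]

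\textbf{Step 3: Jacobian and limit.} Substitute $v = a_{(s,t)} v_0$ in the $v$-integral and express each $v_i \in \R^{m_i}$ and $w_j \in \R^{n_j}$ ($j<r$) in polar form, so that $dv_i = m_i e^{m_i s_i}\, ds_i\, d\mu^{(\Sphere^{m_i})}(\omega_i)$ (via $\|v_i\| = e^{s_i}$) and $dw_j = n_j e^{-n_j t_j}\, dt_j\, d\mu^{(\Sphere^{n_j})}(\omega_j')$ (via $\|w_j\| = e^{-t_j}$). The last block contributes $d\tilde y = e^{n_r\sigma(s,t)}\, dy$ at fixed $(s,t)$, with $n_r\sigma(s,t) = \sum_j n_j t_j - \sum_i m_i s_i$. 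The overall Jacobian matrix is block-triangular, and the exponential factors cancel identically, producing
\[
dv = m_1\cdots m_k\cdot n_1\cdots n_{r-1}\, ds\, dt\, d\mu^{(\Sphere^{m_1})}\!\cdots d\mu^{(\Sphere^{n_{r-1}})}\, dy.
\]
Substituting and performing the $I_\tau$ integration yields the factor $\tau^{k+r-1}$; dividing by $\tau^{k+r-1}$ and letting $\tau\to 0$ gives exactly the pushforward formula claimed. The main obstacle will be the overcount estimate in Step 1: one must show via Siegel applied to the difference lattice that the set of $\Lambda \in E^{I_\tau}$ with two distinct primitive vectors in $W_\tau$ has $m_{\X_d}$-measure $o(\tau^{k+r-1})$, using $E\subset(\sed)_{\geq\delta}$ together with the fact that $W_\tau - W_\tau$ has volume $O(\tau^{k+r-1})$ near the origin.
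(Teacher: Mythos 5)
Your proposal is correct in substance and lands on the same formula, but it organizes the proof differently from the paper. The paper argues locally and at the level of the group: it first shows $\varphi^{-1}(\sed\setminus\ssed)$ is null, reduces to a flow-box neighbourhood of a point of $\ssed$, and then invokes Lemma \ref{lem:Pushforward Measure general} — the map $\Phi(s,v,h)=a_{s}u(v)h$ is injective and open and pulls back $m_G$ to $\tfrac{m_1\cdots n_{r-1}}{\zeta(d)}$ times the product measure — which is itself deduced from the Haar-measure decomposition of \cite{SW22} (Prop.\ 8.7). You instead work globally on $\X_d$: exhaust $\sed$ by the sets $(\sed)_{\geq\delta}$, identify $m_{\X_d}(E^{I_\tau})$ with a count of primitive vectors in the thickened transversal $\bigcup_{(s,t)\in I_\tau}a_{(s,t)}\Lej$, unfold by a fibered (primitive-vector) Siegel mean value theorem, and compute the same block-triangular Jacobian; that Jacobian computation is essentially the one inside the proof of Lemma \ref{lem:Pushforward Measure general}, and your fibered Siegel formula is precisely the $H$-decomposition of Haar measure (the source of the $1/\zeta(d)$) in another guise, so it is the one ingredient you must still prove or cite, exactly as the paper cites \cite{SW22}. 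Two smaller corrections. First, the overcount you flag at the end needs no difference-lattice Siegel estimate: with $E\subset(\sed)_{\geq\delta}$ and $\tau<\delta$, two distinct primitive vectors of $\Lambda\in E^{I_\tau}$ in the thickened transversal would produce two return times of the base point $x\in(\sed)_{\geq\delta}$ of norm less than $2\delta$, forcing the two times to coincide and hence forcing two points of $\Lej$ in a single lattice; thus the exceptional set lies in $\X_d(\Lej,2)^{\R^{k+r-1}}$, which is $m_{\X_d}$-null by the proof of Lemma \ref{two points zero measure}, so it has measure zero rather than merely $o(\tau^{k+r-1})$. Second, $u(v)$ is only defined for $v$ in the transversal, not for general $v$; on the thickened transversal you should use the measurable section $A(v)=a_{(s(v),t(v))}u(v_0(v))$, which satisfies $A(v)\bfe_d=v$, and with this choice $h(v)$ is the identity and the appeal to $H$-invariance of $m_{\E_d}$ becomes unnecessary. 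With those adjustments your route goes through and is a legitimate alternative: it trades the paper's local injectivity/openness argument and fundamental-domain bookkeeping for a global unfolding plus the $(\sed)_{\geq\delta}$ exhaustion, at the cost of having to state and justify the fibered Siegel identity.
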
 \vspace{0.1in}

We will need the following lemmas before proceeding.
\begin{lem}
\label{lem:Pushforward Measure general}
    Let $\Phi$ denote the map 
    $$
 \Phi: \R^{k+r-1} \times \Sphere^{m_1} \times \cdots \times \Sphere^{n_{r-1}} \times \R^{n_r-1} \times \R_+ \times H \rightarrow G, \ \Phi(s,v, h) \mapsto a_{s}u(v)h,
 $$ 
  where $H \leq \SL_d(\R)$ equals {
\begin{align}
\label{eq: def H}
    H := \left\{\begin{pmatrix}
        A & 0 \\ x & 1
    \end{pmatrix}: A \in \SL_{d-1}(\R), x \in \R^n \right\}.
\end{align}}
 {Then $\Phi$ is a continuous, injective and open map.  Furthermore, the pullback of $m_G$ equals 
 $$
 \frac{m_1 \cdots m_k n_1 \cdots n_{r-1}}{\zeta(d)}  \left(m_{\R^{k+r-1}}  \times \mu^{(\Sphere^{m_1})} \times \cdots \times \mu^{(\Sphere^{n_{r-1}})} \times m_{\R^{n_r-1}} \times m_{\R_+} \times m_{H} \right),
 $$
 where $m_{\R_+}$ denotes the restriction of Lebesgue measure to $\R_+$ and $m_G$ (resp.\ $m_H$) denotes the Haar measure on $G$ (resp.\ $H$), normalized so that the fundamental domain of $\Gamma$ (resp.\ $\Gamma_H := \Gamma \cap H$) in $G$ (resp.\ $H$) has measure $1$.}
\end{lem}   
\begin{proof}
The continuity of $\Phi$ is clear. The injectivity of $\Phi$ follows from the fact that one can recover the values of $(s,v)$ from the last column of $\Phi(s,v,h)$ and then we can recover information about $h$ as $(a_s  u(v))^{-1}\Phi(s,v,h)$. Since $\Phi$ is injective, so by the {invariance of domain} theorem, we have that $\Phi $ is open. 

Now, we consider the pullback of the measure $m_G$, and denote it by $\nu$. Using left and right invariance of $m_G$ under the action of $G$, we get that $\nu = m_{\R^{k+r-1}} \times \nu' \times m_H$ for some measure $\nu'$ on $\Sphere^{m_1} \times \cdots \times \Sphere^{n_{r-1}} \times \R^{n_r-1} \times \R_+$. To prove the proposition, it is enough to show that 
    {\begin{align}
    \label{ eq: eq 1}
        \nu'= \frac{m_1 \cdots m_k n_1 \cdots n_{r-1}}{\zeta(d)}  \left( \mu^{(\Sphere^{m_1})} \times \cdots \times \mu^{(\Sphere^{n_{r-1}})} \times m_{\R^{n_r-1}} \times m_{\R_+} \right) .
    \end{align}}
    {To establish \eqref{ eq: eq 1}, fix subsets $E_i \subset \Sphere^{m_i}$ for all $i = 1, \ldots, k$ and $E_{j} \subset \Sphere^{n_{j-k}}$ for all $k+1 \leq j \leq k+r-1$ and $E_{k+r} \subset \R^{n_r-1}$ and $0<\alpha<\beta$. Also fix a subset $X \subset H$ of measure $1$ with respect to $m_H$ and set 
    $$
    E_0= \left\{ \left( \frac{t_1}{m_1},\ldots, \frac{t_k}{m_k}, -\frac{t_{k+1}}{n_1}, \ldots, -\frac{t_{k+r-1}}{n_{r-1}} \right):  t_i \in [0,1] \text{ for all } i= 1 , \ldots , k+r-1 \right\}.
    $$
    Then, to prove \eqref{ eq: eq 1}, it suffices to verify that
    \begin{align}
        &\nu(E_0 \times E_1 \times \cdots \times E_{k+r} \times [\alpha,\beta] \times X) \nonumber \\
        &\quad =\frac{m_1 \cdots m_kn_1 \cdots n_{r-1}}{\zeta(d)} m_{\R^{k+r-1}}(E_0) \mu^{(\Sphere^{m_1})}(E_1) \cdots \mu^{(\Sphere^{n_r-1})}(E_{k+r-1}) m_{\R^{n_r-1}}(E_{k+r})m_{\R_+}([\alpha, \beta]) m_H(X) \nonumber \\
        &\quad =\frac{1}{\zeta(d)}  \mu^{(\Sphere^{m_1})}(E_1) \cdots \mu^{(\Sphere^{n_r-1})}(E_{k+r-1}) m_{\R^{n_r-1}}(E_{k+r})(\beta-\alpha). \label{ eq: eq 2}
    \end{align}}

    { To prove \eqref{ eq: eq 2}, consider the map }
     \begin{equation}
     \nonumber
         \varPsi : \R \times \R^{d-1} \times H \rightarrow G
     \end{equation}
     given by
     \begin{equation}
     \nonumber
         \varPsi(t, z, h) = \begin{pmatrix}
             e^t I_{d-1} \\  &e^{-(d-1)t}
         \end{pmatrix} \cdot \begin{pmatrix}
             I_{d-1} & z \\ &1
         \end{pmatrix} \cdot h.
     \end{equation}
     {It is proved in the proof of \cite[Prop.~8.7]{SW22} that $\varPsi$ is an injective continuous open map such that }
     \begin{equation}
     \nonumber
         \frac{d-1}{\zeta(d)} (m_{\R} \times m_{\R^{d-1}} \times m_H)(B)= m_G(\varPsi(B))
     \end{equation}
     for all Borel subset $B \subset \R \times \R^{d-1} \times H$.
     {Noting} that
     \begin{equation}
     \nonumber
         \Phi(\R^{k+r-1} \times \Sphere^{m_1} \times \cdots \times \Sphere^{n_{r-1}} \times \R^{n_r-1} \times \R_+ \times H ) \subset \varPsi(\R \times \R^{d-1} \times H),
     \end{equation}
    {we get}
    \begin{align}
        &\nu(E_0 \times E_1 \times \cdots \times E_{k+r} \times [\alpha,\beta] \times X) \nonumber\\
        &= m_G\left(\Phi(E_0 \times E_1 \times \cdots \times E_{k+r} \times [\alpha,\beta] \times X) \right) \nonumber\\
        & =\frac{d-1}{\zeta(d)} (m_{\R} \times m_{\R^{d-1}} \times m_H)(\varPsi^{-1}(\Phi(E_0 \times E_1 \times \cdots \times E_{k+r} \times [\alpha,\beta] \times X))). \label{eq: abd 0}
    \end{align}
    {Note that the set $\varPsi^{-1}(\Phi(E_0 \times E_1 \times \cdots \times E_{k+r} \times [\alpha,\beta] \times X))$ equals
    \begin{align}
    \label{eq: abd 1}
        \left\{
\left(t, \left(e^{-t +\frac{t_1}{m_1}}x_1, \ldots, e^{-t + \frac{t_{k+r-1}}{n_{r-1}}}x_{k+r-1},  e^{-dt} \gamma^{-1}  x_{k+r} \right), g_{(\underline{t},\gamma)} h
\right)
\;\middle|\;
\begin{array}{l}
t = \dfrac{t_1 + \cdots + t_{k+r-1} - n_r \log \gamma}{n_r(d-1)}, \\[6pt]
 x_i \in E_i, \quad t_i \in [0,1] \text{ for all } i, \\[6pt] \gamma \in [\alpha, \beta], \quad h \in X
\end{array}
\right\},
    \end{align}
where 
    $$
    g_{(\underline{t},\gamma)}= \begin{pmatrix} e^{-t} \gamma^{\frac{-1}{(d-1)}} . \begin{pmatrix}
            e^{t_1/m_1}I_{m_1} & \\ & \ddots \\ & & e^{t_{k+r-1}}I_{n_{r-1}} \\&&&e^{-(t_1+ \cdots t_{k+r-1})/ n_r}I_{n_r-1}
        \end{pmatrix} & \\ &1
        \end{pmatrix}.
    $$
Using Fubini’s theorem, the measure of the set in \eqref{eq: abd 1} with respect to $m_{\R} \otimes m_{\R^{d-1}} \otimes m_H$ is the same as the measure of the set
\begin{align}
    \label{eq: abd 2}
        \left\{
\left(t, \left( e^{-t +\frac{t_1}{m_1}}x_1, \ldots, e^{-t + \frac{t_{k+r-1}}{n_{r-1}}}x_{k+r-1},  e^{-dt} \gamma^{-1}  x_{k+r}\right)
\right)
\;\middle|\;
\begin{array}{l}
t = \dfrac{t_1 + \cdots + t_{k+r-1} - n_r \log \gamma}{n_r(d-1)}, \\[6pt]
 x_i \in E_i, \quad t_i \in [0,1] \text{ for all } i, \\[6pt] \gamma \in [\alpha, \beta]
\end{array}
\right\},
    \end{align}
 with respect to $m_{\R} \times m_{\R^{d-1}}$. Now, using the decomposition 
 $$
 m_{\R^{d-1}}= m_{\R^{m_1}} \otimes \cdots \otimes m_{\R^{n_{r-1}}} \otimes m_{\R^{n_r-1}},
 $$  
 and then using the isomorphism $\R^l \simeq  \R_{\geq 0} \times \Sphere^l$ given by $x \mapsto (\|x\|^l, \frac{x}{\|x\|})$, to further decompose $m_{\R^{m_1}}, \ldots , m_{\R^{n_{r-1}}}$, we get that the measure of the set in \eqref{eq: abd 2} is the same as the measure of the set
 \begin{align}
   \nonumber
        \left\{
\left( \left(t,  e^{-m_1t +{t_1}}, \ldots, e^{-n_{r-1}t + t_{k+r-1}},  e^{-dt} \gamma^{-1}  x_{k+r} \right), \left(x_1, \ldots, x_{k+r-1}\right)
\right)
\;\middle|\;
\begin{array}{l}
t = \dfrac{t_1 + \cdots + t_{k+r-1} - n_r \log \gamma}{n_r(d-1)}, \\[6pt]
 x_i \in E_i, \quad t_i \in [0,1] \text{ for all } i, \\[6pt] \gamma \in [\alpha, \beta]
\end{array}
\right\}
    \end{align}
with respect to $m_{\R^{k+r-1+ n_{r}}} \otimes \mu^{(\Sphere^{m_1})} \otimes \cdots \otimes \mu^{(\Sphere^{n_{r-1}})}$.

By another application of Fubini's theorem, this equals
\[
\mu^{(\Sphere^{m_1})}(E_1)\cdots
\mu^{(\Sphere^{n_{r-1}})}(E_{k+r-1})
\]
times the measure of
\[
\left\{
\left(t,  e^{-m_1t + t_1}, \ldots, e^{-n_{r-1}t + t_{k+r-1}},  e^{-dt} \gamma^{-1} x_{k+r} \right)
\;\middle|\;
\begin{array}{l}
t = \dfrac{t_1 + \cdots + t_{k+r-1} - n_r \log \gamma}{n_r(d-1)}, \\[6pt]
x_{k+r} \in E_{k+r},\quad t_i \in [0,1] \text{ for all } i, \\[6pt]
\gamma \in [\alpha, \beta]
\end{array}
\right\}
\]
with respect to $m_{\R^{k+r-1+n_r}}$.

To compute this measure, consider the map
\[
(t_1,\ldots,t_{k+r-1},\gamma,x_{k+r})
\mapsto
\left(
t,
e^{-m_1t+t_1},
\ldots,
e^{-n_{r-1}t+t_{k+r-1}},
e^{-dt}\gamma^{-1}x_{k+r}
\right),
\]
where
\[
t=\frac{t_1+\cdots+t_{k+r-1}-n_r\log\gamma}{n_r(d-1)}.
\]
A direct computation of the Jacobian determinant shows that its absolute value equals $\frac{1}{d-1}$. Integrating over the domain
\[
t_i\in[0,1],\qquad \gamma\in[\alpha,\beta],\qquad x_{k+r}\in E_{k+r},
\]
we obtain that the above measure equals
\[
\frac{1}{d-1}\, m_{\R^{n_r-1}}(E_{k+r})(\beta-\alpha).
\]

Therefore, the measure of the set in \eqref{eq: abd 1} equals
\[
\frac{1}{d-1}
\mu^{(\Sphere^{m_1})}(E_1)\cdots
\mu^{(\Sphere^{n_{r-1}})}(E_{k+r-1})
\,m_{\R^{n_r-1}}(E_{k+r})(\beta-\alpha).
\]
Thus \eqref{eq: abd 0} implies \eqref{ eq: eq 2}, and the lemma follows.

}\end{proof}

\begin{lem}
\label{two points zero measure}
    The set $\ssed$ is a co-null open subset of $\sed$.
\end{lem}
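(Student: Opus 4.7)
By definition $\ssed = \sed \setminus \X_d(\Le^+,2)$. Since $\Le^+$ is compact (a finite product of unit spheres with the closed Euclidean ball $\{y\in\R^{n_r}:y_{n_r}\ge 0,\ \|y\|^{n_r}\le\e\}$), the first bullet of Lemma \ref{SW22 Lem 8.1} gives that $\X_d(\Le^+,2)$ is closed in $\X_d$, and therefore $\ssed$ is open in $\sed$.

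\textbf{Co-nullness.} Set $B := \X_d(\Le^+,2)$ and $W_\tau := \bigcup_{s\in I_\tau} a_s \Le^+$. By \eqref{defmu_S}, it suffices to show $m_{\X_d}(B^{I_\tau}) = o(\tau^{k+r-1})$ as $\tau\to 0^+$. For $\Lambda\in B^{I_\tau}$, write $\Lambda=a_s\Lambda'$ with $s\in I_\tau$ and $\Lambda'\in B$; the two distinct primitive vectors of $\Lambda'$ in $\Le^+$ are carried by $a_s$ to two distinct primitive vectors of $\Lambda$ in $a_s\Le^+\subset W_\tau$, so $B^{I_\tau}\subseteq \X_d(W_\tau,2)$. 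I would then apply the standard second-moment bound
$$m_{\X_d}(\X_d(W_\tau,2)) \le \tfrac12 \int_{\X_d} N_\tau(N_\tau-1)\, dm_{\X_d},\qquad N_\tau(\Lambda):=\#(\Lambda_\prim\cap W_\tau),$$
and split the integrand using the fact that two distinct primitive vectors of a lattice are either $\Q$-linearly independent or antipodal. Rogers' second-moment formula (together with its known analogue in the low-dimensional case $d=2$, which here corresponds to $k=r=m_1=n_1=1$) bounds the linearly-independent contribution by $O(\mathrm{vol}(W_\tau)^2)$, while Siegel's mean-value formula bounds the antipodal contribution by $O(\mathrm{vol}(W_\tau\cap(-W_\tau)))$.

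The required geometric estimates are $\mathrm{vol}(W_\tau)=O(\tau^{k+r-1})$ and $\mathrm{vol}(W_\tau\cap(-W_\tau))=O(\tau^{k+r})$. The first follows because $\Le^+$ is a smooth submanifold of codimension $k+r-1$ in $\R^d$ to which the $(k+r-1)$-parameter flow $(a_s)$ is generically transverse; parameterizing $W_\tau$ via $(s,z)\mapsto a_s z$ and computing the Jacobian yields the bound. The second uses that $\Le^+\cap(-\Le^+)\subset\{z_d=0\}\cap \Le$ is a submanifold of codimension $k+r$, so the intersection of the two $\tau$-thickenings concentrates in a $\tau$-neighborhood of this lower-dimensional locus, gaining one extra factor of $\tau$. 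Combining, $m_{\X_d}(B^{I_\tau})=O(\tau^{2(k+r-1)})+O(\tau^{k+r})=o(\tau^{k+r-1})$ since $k+r\ge 2$. The main technical obstacle is justifying these two volume estimates via a careful change of variables — verifying the transversality of $(a_s)$ to $\Le^+$ away from a negligible set and controlling the extra factor of $\tau$ coming from the boundary locus $\{z_d=0\}$.
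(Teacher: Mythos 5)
Your openness argument is essentially the paper's (Lemma \ref{SW22 Lem 8.1} applied to the compact set $\Le^+$). For co-nullness, however, you take a genuinely different, quantitative route: a second-moment bound for the primitive counting function on the thickened set $W_\tau$, split into linearly independent and antipodal pairs and estimated via Rogers' and Siegel's formulas together with two volume estimates. The paper argues much more softly: it observes that the \emph{entire} flow-saturation $\X_d(\Le^+,2)^{\R^{k+r-1}}$ is contained in an explicit $m_{\X_d}$-null subset of $\X_d$ cut out by coincidence conditions (two lattice vectors whose first-block norms agree, coming from a linearly independent pair in $\Le^+$ pushed by the same $a_t$, or a primitive vector with vanishing last coordinate, coming from an antipodal pair), and then $\mu_{\sed}(\sed\setminus\ssed)=0$ falls out of the definition of $\mu_{\sed}$ with no asymptotics in $\tau$, no Rogers formula, and no transversality or Jacobian computations. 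Your route, where it works, buys a rate ($m_{\X_d}(B^{I_\tau})=O_\e(\tau^{2(k+r-1)})$), but it is substantially heavier than what the statement needs.

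The genuine gap is the case $d=2$ (i.e.\ $k=r=m_1=n_1=1$, which is squarely within the paper's scope). There is no known analogue of Rogers' second-moment identity in dimension $2$ that bounds the linearly-independent contribution by $O(\mathrm{vol}(W_\tau)^2)$: for $d=2$ the off-diagonal term is a sum over the determinant surfaces $\{\det(x,y)=j\}$, $j\neq 0$, of their invariant measure inside $W_\tau\times W_\tau$, and for thin sets this can dominate the square of the volume (two boxes of side $\eta$ placed so that cross pairs have determinant near $1$ give a correlation term of order $\eta^{3}$ against $\mathrm{vol}^2=\eta^{4}$). So the step "known analogue in the low-dimensional case $d=2$ bounds the linearly-independent contribution by $O(\mathrm{vol}(W_\tau)^2)$" is not a citation of an existing result; to rescue it you would have to use the specific geometry of $W_\tau$ (a pair $v,w\in W_\tau$ with $\det(v,w)=j\neq 0$ confines $w$ to a nearly horizontal line whose chord inside $W_\tau$ has length $O_\e(\tau)$, which does give $O_\e(\tau^2)$), and that argument is missing. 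Two lesser points: for $d\geq 3$ your bound is fine, since already the non-primitive Rogers identity for linearly independent pairs gives the upper bound $\mathrm{vol}(W_\tau)^2$; and your estimate $\mathrm{vol}(W_\tau\cap(-W_\tau))=O(\tau^{k+r})$ can be sharpened to $0$, because every $z\in\Le^+$ has unit norms in the first $k+r-1$ blocks, so the block norms of a point of $W_\tau$ determine the flow time and $W_\tau\cap(-W_\tau)$ is exactly the $\{a_s: s\in I_\tau\}$-image of the locus $\{z\in\Le: z_d=0\}$, a set of dimension at most $d-1$.
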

\begin{proof}
By Lemma~\ref{SW22 Lem 8.1}, we have that $\ssed = {\X_d}^\sharp(\Le^+)$ is an open subset of $\sed = {\X_d}(\Le^+)$. To show that $\ssed$ is co-null {with respect to} $\mu_{\sed}$, it suffices to prove that {$(\sed \setminus \ssed)^{\R^{k+r-1}} = {\X_d}(\Le^+,2)^{\R^{k+r-1}}$} has zero measure {with respect to} $\mu_{{\X_d}}$.

To see {this}, note that if $\Lambda \in {\X_d}(\Le^+,2)^{\R^{k+r-1}}$, then $\Lambda$ contains two primitive vectors {of the form} $a_t v, a_t w$, where $t \in \R^{k+r-1}$ and $v, w \in \Le^+$. 

If $v$ and $w$ are linearly independent, then $\Lambda$ {admits} a basis containing {$a_t v$ and $a_t w$}, which satisfy
\[
\|\rho_1(\varrho_1(a_t v))\| = \|\rho_1(\varrho_1(a_t w))\| = e^{t_1}.
\]
{Otherwise $v=-w$, and hence we have $\Lambda_{\mathrm{prim}} \cap (\R^{d-1} \times \{0\}) \neq \emptyset$.}

{In either case,} ${\X_d}(\Le^+,2)^{\R^{k+r-1}}$ is contained in the set
{\begin{align}
\label{eq:temp1}
\left\{ A \cdot \Gamma : \text{either } \|\rho_1(\varrho_1(A_{\cdot 1}))\| = \|\rho_1(\varrho_1(A_{\cdot 2}))\| \text{ or } A_{dd} = 0 \right\},
\end{align}
where $A_{\cdot j}$ denotes the $j$-th column of $A$, and $A_{ij}$ denotes the $(i,j)$-th entry of the matrix $A$.}

{It is straightforward to verify that} the set in~\eqref{eq:temp1} has zero measure with respect to $\mu_{{\X_d}}$. {This completes the proof.}
\end{proof}

\begin{lem}
\label{lem: General psi continuous}
Define {the map
\[
\phi: \ssed \rightarrow \E_d \times \Sphere^{m_1} \times \cdots \times \Sphere^{n_{r-1}} \times \R^{n_r-1} \times \R_{+}
\]
by
\begin{equation}
\label{eq: def phi general}
    \phi(\Lambda) = \left( u(\vector(\Lambda))^{-1} \Lambda,\ \vector(\Lambda) \right),
\end{equation}
where \(\vector(\Lambda)\) defined in~\eqref{defv-unique}, equals the unique primitive vector of $\Lambda$ in $\Le^+$.} Then \(\phi\) {is the} inverse of \(\varphi|_{\varphi^{-1}(\ssed)}\), and is a homeomorphism between \(\ssed\) and \(\varphi^{-1}(\ssed)\).
\end{lem} 
\begin{proof}
{First we check that \(\phi\) is well defined, i.e.\ that for every \(\Lambda\in\ssed\), the vector
\[
v \;:=\; \vector(\Lambda)
\]
satisfies \(v\in\Sphere^{m_1}\times\cdots\times\Sphere^{n_{r-1}}\times\R^{n_r-1}\times\R_{+}\).  Equivalently, we must show its last coordinate \(v_{d}\) is strictly positive.}

{Suppose, by way of contradiction, that \(v_{d}=0\).  Then both \(v\) and its negative \(-v\) lie in {$\Le^+$ defined as in \eqref{eq:def Lej}}. But \(\Lambda\in\ssed=\X^\sharp(\Le^+)\) admits exactly one primitive vector in \(\Le^+\), so we cannot have two distinct primitives \(v\) and \(-v\).  This contradiction forces \(v_{d}>0\), proving \(\phi\) is well defined.}

{Continuity of \(\phi\) follows from Lemma~\ref{SW22 Lem 8.1}(iii), and \(\varphi\) is also continuous by construction.  Finally, one checks directly that \(\phi\) and \(\varphi\bigl|_{\varphi^{-1}(\ssed)}\) are mutual inverses.  Hence \(\phi\) is a homeomorphism onto \(\varphi^{-1}(\ssed)\).}
 
\end{proof}

{We now prove Proposition \ref{measure equal general}.}

\begin{proof}[Proof of Proposition \ref{measure equal general}]
{Throughout this proof, let \(H \), \( m_H\) and $m_G$ be as in Lemma~\ref{lem:Pushforward Measure general}.}

{We first show that \(\varphi^{-1}(\sed\setminus \ssed)\) has measure zero. Observe that \((\Lambda_0, x, x', \gamma) \in \varphi^{-1}(\sed \setminus \ssed)\) if and only if the transformed lattice \(u(x, x', \gamma)\Lambda_0\) contains a point in \(\Le^+ \setminus (x, x', \gamma)\). Equivalently, this means that \(\Lambda_0\) contains a point in the set  
\[
W_{(x, x', \gamma)} := u(x, x', \gamma)^{-1} \Le^+ \setminus \bfe_d.
\]
A simple computation shows that 
\[
W_{(x, x', \gamma)} \cap \R \bfe_d = \emptyset,
\]
which in turn implies that  
\[
\E_d \cap \X_d(W_{(x, x', \gamma)}) \subset \left\{ A \cdot \Gamma : A_{\cdot 1} \in W_{(x, x', \gamma)},\ A_{\cdot d} = \bfe_d \right\},
\]
where \(A_{\cdot j}\) denotes the \(j\)-th column of the matrix \(A\). Since \(W_{(x,x',\gamma)}\) has Lebesgue measure zero and the map
\[
H \;\longrightarrow\;\R^d,\qquad h\longmapsto h\,\bfe_1
\]
pushes \(m_H\) forward to a constant multiple of Lebesgue measure on \(\R^d\), we get that the set
\[
\left\{ A \in \SL_d(\R) : A_{\cdot 1} \in W_{(x, x', \gamma)},\ A_{\cdot d} = \bfe_d \right\} \subset H
\]
has measure zero with respect to \(m_H\). Consequently,
\[
m_{\E_d}\left( \E_d \cap \X_d(W_{(x, x', \gamma)}) \right) = 0.
\]
Now, since
\[
\varphi^{-1}(\sed \setminus \ssed) = \left\{ (\Lambda_0, x, x', \gamma) \in \E_d \times \Le^+ : \Lambda_0 \in \X_d(W_{(x, x', \gamma)}) \right\},
\]
Fubini's theorem implies that \(\varphi^{-1}(\sed \setminus \ssed)\) has measure zero.}

\smallskip

{ Since $\varphi^{-1}(\sed \setminus \ssed)$ has measure zero, it suffices, by Lemma~\ref{lem: General psi continuous}, to prove \eqref{mu_sed measure general} locally for points of $\ssed$. That is, for every $x_0 = \varphi(y_0) \in \ssed$, there exists a neighborhood $\mathcal{U} = \varphi(\mathcal{V})$ of $x_0$ with $\mathcal{V} \subset \varphi^{-1}(\ssed)$ open in $\E_d \times \Sphere^{m_1} \times \cdots \times \Sphere^{n_{r-1}} \times \R^{n_r-1} \times \R_{+}$ containing $y_0$, such that
\begin{align}
    \label{eq: xxxz 1}
    \mu_{\sed}|_{\mathcal{U}} = \frac{m_1 \cdots m_k \, n_1 \cdots n_{r-1}}{\zeta(d)} \cdot \varphi_* \left( \left( m_{\E_d} \times \mu^{(\Sphere^{m_1})} \times \cdots \times \mu^{(\Sphere^{n_{r-1}})} \times m_{\R^{n_r}}|_{\cL_\e} \right)|_{\mathcal{V}} \right).
\end{align}
To prove this, fix $x_0 = \varphi(\Lambda_0, v) \in \ssed$. Using Lemmas~\ref{lem:Pushforward Measure general} and~\ref{lem: General psi continuous}, and the discreteness of $\Gamma$ and $\Gamma_H$, we can find open sets $\mathcal{W}_H \subset H$, $\mathcal{W}_G \subset G$, $U \subset \Sphere^{m_1} \times \cdots \times \Sphere^{n_{r-1}} \times \cL_\e$, and $\delta>0$ satisfying
\begin{align}
    \label{eq: llal 1}
    \{ a_t : t \in [0, \delta]^{k+r-1} \} \cdot \{ u(v) : v \in U \} \cdot \mathcal{W}_H \subset \mathcal{W}_G,
\end{align}
such that $\Lambda_0 \in \{h \cdot \Gamma_H : h \in \mathcal{W}_H\}$, $v \in U$, and $\mathcal{W}_H$ and $\mathcal{W}_G$ are contained in fundamental domains for the right actions of $\Gamma_H$ on $H$ and $\Gamma$ on $G$, respectively.

Note that $\mathcal{W}_H$ and $\mathcal{W}_G$ map homeomorphically onto their images in $\E_d$ and $\X_d$, respectively, and the pushforwards of the measures $m_H|_{\mathcal{W}_H}$ and $m_G|_{\mathcal{W}_G}$ equal the restriction of $m_{\E_d}$ and $m_{\X_d}$ to these images. Thus, to prove \eqref{eq: xxxz 1} for $\mathcal{V} = \mathcal{W}_H \times U$, equation~\eqref{eq: llal 1} and the definition of $\mu_{\sed}$ imply that it suffices to show that the map
\[
\Phi: \R^{k+r-1} \times \Sphere^{m_1} \times \cdots \times \Sphere^{n_{r-1}} \times \R^{n_r-1} \times \R_{+} \times H \to G, \quad (t, v, h) \mapsto a_t u(v) h
\]
pushes forward the measure
\[
\frac{m_1 \cdots m_k \, n_1 \cdots n_{r-1}}{\zeta(d)} \cdot \left( m_{\R^{k+r-1}} \times \mu^{(\Sphere^{m_1})} \times \cdots \times \mu^{(\Sphere^{n_{r-1}})} \times m_{\R^{n_r}}|_{\cL_\e} \times m_H \right)
\]
to $m_G|_P$, where $P$ is the image of $\Phi$. This follows directly from Lemma~\ref{lem:Pushforward Measure general}.

This completes the proof of \eqref{mu_sed measure general}, and hence the proof of the  proposition.}

\end{proof}


\section{Properties of Cross-section}
\label{sec:Properties of sed}

\begin{lem}
\label{muJM alpha}
   For {every} $\delta>0$, the set $(\sed)_{\geq \delta}$ is  $\mu_{\sed} $-JM. 
\end{lem}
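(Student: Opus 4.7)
The plan is to show that $\partial(\sed)_{\geq\delta}$ is $\mu_{\sed}$-null. I would begin by identifying the boundary. If $x \in \partial(\sed)_{\geq\delta}$, then $x \in \overline{(\sed)_{<\delta}}$, so there exist $x_n \to x$ with $x_n \in (\sed)_{<\delta}$, each coming with a return time $(s_n,t_n) \in T_\e(x_n)\setminus\{0\}$ satisfying $\|(s_n,t_n)\|_\infty < 2\delta$. Passing to a subsequence, $(s_n,t_n) \to (s_0,t_0)$ with $\|(s_0,t_0)\|_\infty \leq 2\delta$, and $a_{(s_0,t_0)}x \in \sed$ by closedness of $\sed$ (Lemma \ref{SW22 Lem 8.1}). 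A short case analysis, exploiting the transversality of the $(k+r-1)$-parameter flow to the $(d-(k+r-1))$-dimensional set $\Le^+$ (so that $(\sed)_{\geq\delta}$ is in fact closed in $\sed$), shows that either $\|(s_0,t_0)\|_\infty = 2\delta$, placing $x$ in the critical set $N := \bigcup_{(s,t)\in F}(\sed\cap a_{-(s,t)}\sed)$ with $F := \{(s,t)\in\R^{k+r-1} : \|(s,t)\|_\infty = 2\delta\}$; or else $(s_0,t_0)=0$, in which case the limits of the extra primitive vectors of $x_n$ yield a primitive vector of $x$ in $\Le^+$ distinct from the canonical one, forcing $x \in \sed\setminus\ssed$. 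Since Lemma \ref{two points zero measure} gives $\mu_{\sed}(\sed\setminus\ssed)=0$, it remains to prove $\mu_{\sed}(N)=0$.

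I would then invoke the explicit formula of Proposition \ref{measure equal general}, which realises $\mu_{\sed}|_{\ssed}$ as the pushforward under $\varphi$ of a product measure on $\E_d\times\Sphere^{m_1}\times\cdots\times\Sphere^{n_{r-1}}\times\{y\in\R^{n_r}:y_{n_r}>0,\ \|y\|^{n_r}<\e\}$. Writing $\Lambda = \varphi(\Lambda_0,\xi) = u(\xi)\Lambda_0$, the lattice $\Lambda$ lies in $N$ precisely when $\Lambda_0$ possesses a primitive vector $w \neq \pm\bfe_d$ with $u(\xi)w \in a_{-(s,t)}\Le^+$ for some $(s,t)\in F$, i.e., $\Lambda_0 \in \X_d(W_\xi)$, where $W_\xi := \bigl(\bigcup_{(s,t)\in F} u(\xi)^{-1}a_{-(s,t)}\Le^+\bigr)\setminus\{\pm\bfe_d\}$. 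By Fubini, it thus suffices to verify $m_{\E_d}(\X_d(W_\xi))=0$ for each $\xi$.

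Finally, I would apply Lemma \ref{SW22Lem8.8} with $L = H$, for which I must check that for every $v \in W_\xi$ there is a small $\delta'>0$ with $m_H(\{g\in B_{\delta'}(H) : gv \in W_\xi\}) = 0$. This follows from a dimension count: $\Le^+$ is a smooth submanifold of $\R^d$ of dimension $d-(k+r-1)$ (a product of $k+r-1$ spheres with a relatively open subset of $\R^{n_r}$), while $F$ is $(k+r-2)$-dimensional (a finite union of faces of a cube), so $W_\xi$ is a finite union of smooth pieces of dimension $d-1$ in $\R^d$. On the other hand, for any $v \in \R^d$ not proportional to $\bfe_d$, the orbit $Hv$ is $d$-dimensional (by a direct computation using the block form of $H$), so $Hv\cap W_\xi$ is locally a codimension-one subvariety of $Hv$, which transports to an $m_H$-null slice of a small $H$-neighbourhood of the identity. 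Lemma \ref{SW22Lem8.8} then yields $m_{\E_d}(\X_d(W_\xi))=0$, hence $\mu_{\sed}(N)=0$. The main obstacle is the boundary identification in the first paragraph --- specifically, establishing that $(\sed)_{\geq\delta}$ is closed via the transversality of the flow to $\Le^+$, which relies on $\Le^+$ having the correct codimension for $\sed$ to be a cross-section.
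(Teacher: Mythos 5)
Your case analysis of the boundary points is incomplete, and the omission is exactly where the paper has to do extra work. After extracting the limit $(s_n,t_n)\to(s_0,t_0)$ you allow only the alternatives $\|(s_0,t_0)\|_\infty=2\delta$ or $(s_0,t_0)=0$, justified by the parenthetical claim that transversality of the flow to $\Le^+$ makes $(\sed)_{\geq\delta}$ closed in $\sed$. That claim is false: $\Le$ is not a boundaryless transversal, because of the constraint $\|y\|^{n_r}\leq\e$ in the last coordinate block. A point $x\in\sed$ can have a nonzero return time $(s_0,t_0)$ with $0<\|(s_0,t_0)\|_\infty<2\delta$ whose returning primitive vector lies on the sphere $\{\|y\|^{n_r}=\e\}$; such a return is destroyed by arbitrarily small perturbations, so $x$ lies in $(\sed)_{<\delta}$ and yet can be a limit of points of $(\sed)_{\geq\delta}$, hence $x\in\partial_{\sed}((\sed)_{\geq\delta})$ while belonging to neither of your two critical sets $N$ and $\sed\setminus\ssed$. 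This middle case is why the paper covers the boundary by \emph{three} sets, $\X_d(\Le^+,2)$, $\X_d(F)$ and additionally $\X_d(D_\e)$ with $D_\e$ built from $\Sphere^{m_1}\times\ldots\times\Sphere^{n_{r-1}}\times(\e^{1/n_r}\Sphere^{n_r})$, and then shows the $D_\e$-piece is null by proving its flow-saturation has $m_{\X_d}$-measure zero via Lemma \ref{SW22Lem8.8}. Your proof needs this third case, together with an argument for its nullity; without it the argument does not go through.

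Two smaller points. In the case $\|(s_0,t_0)\|_\infty=2\delta$ you apply Lemma \ref{SW22Lem8.8} with $L=H$ to $W_\xi$ after removing only $\{\pm\bfe_d\}$; but $W_\xi$ can contain other multiples $c\bfe_d$ (with $|\log|c||=2\delta$), and these are fixed by every element of $H$, so the hypothesis $m_H(\{g\in B_{\delta'}(H):gv\in W_\xi\})=0$ fails at such $v$. This is fixable — remove the whole line $\R\bfe_d$ from $W_\xi$ and note that the only primitive vectors of a lattice in $\E_d$ on that line are $\pm\bfe_d$, which you already showed cannot be witnesses — but as written the verification of the hypothesis is incomplete. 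Finally, for the $F$-piece your route (the explicit formula of Proposition \ref{measure equal general}, Fubini, then Lemma \ref{SW22Lem8.8} on each fibre) genuinely differs from the paper, which instead bounds the flow-saturation of $\sed\cap\X_d(F)$ by an explicit $m_{\X_d}$-null set of lattices having two basis vectors whose block norms differ by the factor $e^{\delta}$; either route is viable once the dimension count for $W_\xi$ is made precise, and the same saturation trick is also what handles the missing $D_\e$ case, so you may find it simpler to adopt it throughout.
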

\begin{proof}
{Fix \(\delta > 0\).} Since \(\partial_{\sed}((\sed)_{\geq \delta}) = \partial_{\sed}((\sed)_{< \delta})\), it {suffices} to prove that \((\sed)_{< \delta}\) is \(\mu_{\sed}\)-JM. {To prove this, let \(\Le^\circ\) denote the set}
{\begin{align}
\label{eq: def L zero}
    \Le^\circ = \{(x,y) \in \R^{d-n_r} \times \R^{n_r}: (x,y) \in \Le, \|y\|^{n_r} < \e\}.
\end{align}}
{Also, define the following sets:}
{\begin{align*}
    W_\delta &= \bigcup_{\|t\|_{\el} \leq \delta} a_{t}^{-1}(\Le), \\
    W_\delta^\circ &= \bigcup_{0< \|t\|_{\el} < \delta} a_{t}^{-1}(\Le^\circ), \\
    D_\e &= \bigcup_{\|t\|_{\el} \leq \delta} a_{t}^{-1}(\Sphere^{m_1} \times \cdots \times \Sphere^{m_k} \times  \Sphere^{n_1} \times \cdots \times \Sphere^{n_{r-1}} \times (\e^{1/n_r} \Sphere^{n_r})), \\
    F &= \bigcup_{\|t\|_{\el}= \delta} a_{t}^{-1}(\Le).
\end{align*}}
It is clear that \(W_\delta^\circ\) is an open subset of \(\R^d\), {while} \(W_\delta\), \(D_\e\), and \(F\) are closed subsets of \(\R^d\), and we have
\[
W_\delta = W_\delta^\circ \cup D_\e \cup F \cup \Le.
\]

{Note that} \((\sed)_{< \delta}\) is contained in the set
\[
E_1 := {\X_d}(\Le) \cap {\X_d}(W_\delta,4),
\]
which is closed in \(\sed\) by Lemma \ref{SW22 Lem 8.1}. {Also, by Lemmas~\ref{SW22 Lem 8.1} and~\ref{Sw22 Lem 8.2}, we deduce that the set}
\[
E_2 := {\X_d}^\sharp(\Le^+) \cap {\X_d}(W_\delta^\circ,2) \subset (\sed)_{< \delta}
\]
is open in \(\sed\). It follows that
\[
\partial_{\sed}((\sed)_{< \delta}) \subset E_1 \setminus E_2.
\]

It is easy to see that
\begin{equation}
\label{E_1 minus E_2}
E_1 \setminus E_2 \subset {\X_d}(\Le^+,2) \cup {\X_d}(D_\e) \cup {\X_d}(F) .
\end{equation}

We now show that {the intersection of each of the sets on the right-hand side of \eqref{E_1 minus E_2} with \(\sed\) has \(\mu_{\sed}\)-measure zero.}

First, the fact that \(\mu_{\sed}({\X_d}(\Le^+,2)) = 0\) follows from Lemma \ref{two points zero measure}.

For the second {term}, note that
\[
{\X_d}(D_\e)^{\R^{k+r-1}} = {\X_d}\left(\left\{(x_1, \ldots, x_k, y_1, \ldots, y_r) \in \R^{m_1} \times \cdots \times \R^{n_r} : \prod_{i=1}^k \|x_i\|^{m_i} \cdot \prod_{j=1}^r \|y_j\|^{n_j} = \e \right\}\right).
\]
By Lemma \ref{SW22Lem8.8}, we {conclude} that \(\mu_{{\X_d}}({\X_d}(D_\e)^{\R^{k+r-1}}) = 0\), {and hence} \(\mu_{\sed}(\sed \cap {\X_d}(D_\e)) = 0\), {by definition of $\mu_{\sed}$.}

{Next}, we show that \(\mu_{\sed}(\sed \cap {\X_d}(F)) = 0\). {Suppose} \(\Lambda \in (\sed \cap {\X_d}(F))^{\R^{k+r-1}}\). Then there exist {\(s = (s_1, \ldots, s_{k+r-1}),\  t = (t_1, \ldots, t_{k+r-1}) \in \R^{k+r-1}\) with \(\|t\|_{\infty} = \delta\),} such that
\[
\Lambda \in a_s \left({\X_d}(\Le) \cap {\X_d}(a_t \Le) \right).
\]
Thus, \(\Lambda\) has a basis containing two vectors \(v, w \in \Lambda_{\prim}\), satisfying
\[
v \in a_s(\Sphere^{m_1} \times \cdots \times \Sphere^{n_{r-1}} \times \R^{n_r}), \quad w \in a_s(a_t(\Sphere^{m_1} \times \cdots \times \Sphere^{n_{r-1}} \times \R^{n_r})).
\]
Since there exists \(1 \leq j \leq k+r-1\) such that \(t_j = \pm \delta\), we get
\[
\|\rho_j(\varrho_1(v))\| = e^{\mp \delta} \|\rho_j(\varrho_1(w))\| \quad \text{if } 1 \leq j \leq k,
\]
or
\[
\|\rho_{j-k}'(\varrho_2(v))\| = e^{\mp \delta} \|\rho_{j-k}'(\varrho_2(w))\| \quad \text{if } k+1 \leq j \leq k+r.
\]
This {implies} that \((\sed \cap {\X_d}(F))^{\R^{k+r-1}}\) is contained in the set
\[
\left\{ A \cdot \Gamma: \begin{array}{l}
\text{either } \|\rho_j(\varrho_1(A_{\cdot 1}))\| = e^{\delta} \|\rho_j(\varrho_1(A_{ \cdot 2}))\| \text{ for some } 1 \leq j \leq k, \\
\text{or } \|\rho_{j-k}'(\varrho_2(A_{ \cdot 1}))\| = e^{\delta} \|\rho_{j-k}'(\varrho_2(A_{ \cdot 2}))\| \text{ for some } k+1 \leq j \leq k+r
\end{array} \right\},
\]
{which clearly has \(\mu_{{\X_d}}\)-measure zero. Here $A_{\cdot j}$ denotes the $j$-th column of $A$. This completes the proof.} 
\end{proof}

\begin{lem}
\label{lem: reasonable}
{Let $L_\e^\circ$ be as in \eqref{eq: def L zero}, and define
\begin{align}
    \label{eq: def ued}
    \ued := \ssed \cap {\X_d}(L_\e^\circ).
\end{align}
Then} $\ued$ is an open subset of $\sed$ and satisfies the following properties:
\begin{enumerate}
    \item {$\mu_{{\X_d}}((\cl_{\X_d}(\sed) \setminus \ued)^{(0,1)^{k+r-1}}) = 0$, where $\cl_{\X_d}(\cdot)$ denotes the closure of set in ${\X_d}$.}
    \item The map $(0,1)^{k+r-1} \times \ued \to {\X_d}$, given by $(t, x) \mapsto a_t x$, is open.
\end{enumerate}
\end{lem}
\begin{proof}
From Lemma \ref{SW22 Lem 8.1}, we know that $\ued$ is open in $\sed$, and that $\sed$ is closed {in} ${\X_d}$. Moreover, Proposition~\ref{measure equal general} and Lemma~\ref{two points zero measure} {imply} that $\ued$ is a co-null open subset of $\sed$. {Therefore,}
{\begin{align*}
    \mu_{{\X_d}}((\cl_{\X_d}(\sed) \setminus \ued)^{(0,1)^{k+r-1}}) 
    &= \mu_{{\X_d}}((\sed \setminus \ued)^{(0,1)^{k+r-1}}) \\
    &= \mu_{\sed}(\sed \setminus \ued) \cdot m_{\R^{k+r-1}}((0,1)^{k+r-1}) = 0,
\end{align*}}
{proving part (1).}

{ For part~(2), recall from Lemma~\ref{lem:Pushforward Measure general} that the map
\[
\Phi: \R^{k+r-1} \times \Sphere^{m_1} \times \cdots \times \Sphere^{n_{r-1}} \times \R^{n_r-1} \times \R_+ \times H \to G, \quad (s, v, h) \mapsto a_s u(v) h,
\]
is open. This induces an open map
\[
\tilde{\Phi}: \R^{k+r-1} \times \E_d \times \Sphere^{m_1} \times \cdots \times \Sphere^{n_{r-1}} \times \cL_\e^\circ \to {\X_d},
\]
given by $\tilde{\Phi}(s, h\Z^d, v) = a_s u(v) h \Z^d$, where
\[
\cL_\e^\circ = \{ y \in \R^{n_r} : y_{n_r} > 0, \ \|y\|^{n_r} < \e \}.
\]

By Lemma~\ref{lem: General psi continuous}, the map $\phi$ is a homeomorphism onto its image. Moreover, $\phi(\ued)$ is open, since it is the inverse image of the open set $\ued$ under the continuous map $\varphi$. Therefore, $\phi|_{\ued}$ is open. 

Clearly,
\[
\phi(\ued) \subset \E_d \times \Sphere^{m_1} \times \cdots \times \Sphere^{n_{r-1}} \times \cL_\e^\circ.
\]
Hence, the composition
\[
(t, x) \longmapsto a_t x \;=\; \tilde{\Phi} \circ (\mathrm{id}_{\R^{k+r-1}}, \phi|_{\ued}),
\]
from $(0,1)^{k+r-1} \times \ued$ to ${\X_d}$, is open as the composition of open maps. This proves part~(2).}
\end{proof}

\section{Adelizing the Setup}
\label{sec:Adelizing the Setup}
{We now consider the lift of the cross-section to the adelic space and study its properties.  }

\begin{thm}
\label{thm: Adeles}
{Let 
\begin{align}
    \label{eq: def adeles}
    \seda:= \pi^{-1}(\sed), \quad \ \sseda:= \pi^{-1}(\ssed), \text{ and} \quad \ \ueda := \pi^{-1}(\ued).
\end{align}
Then the following statements hold.}
    \begin{enumerate}
        \item The set $\seda$ is closed in $\XA_d$.
        \item For every $\Lambda \in \XA_d$, the set $\Tilde{T}_{\e}(\Lambda) := \{t \in \R^{k+r-1}: a_{t}\Lambda \in \seda \}$ is a discrete set.
        \item For any $\delta > 0$, the sets $(\seda)_{ \geq \delta}$ and $(\seda)_{< \delta}$ are Borel subsets of $\seda$, where 
        $$(\seda)_{\geq \delta}= \{ x \in \seda: \min\{\|t\|_{\el}: t \in \tilde{T}_{\e}(x) \setminus \{(0,0)\}\} \geq 2\delta \}= \pi^{-1}((\sed)_{\geq \delta}),$$ $$(\seda)_{< \delta}= \seda \setminus (\seda)_{\geq \delta}.$$
        \item {For every Borel measurable set \( E \subset \seda \), the limit
{\begin{equation}
    \label{defmu_Sa}
    \mu_{\seda}(E) = \lim_{\tau \to 0} \frac{1}{\tau^{k + r - 1}} \mu_{\XA_d}(E^{I_\tau})
\end{equation}}
exists and defines a measure \( \mu_{\seda} \) on the adelic cross-section. Here, \( I_\tau \) and \( E^{I} \) are defined as in equations~\eqref{eq: def: I tau} and~\eqref{eq: defE^I}.}
    \item The cross-sectional measure $\mu_{\seda}$ satisfies $$\pi_*(\mu_{\seda})= \mu_{\sed}.$$
    \item $\sseda$ is a co-null open subset of $\seda$.
    \item For every $\delta>0$, we have that for any Borel set $E \subset (\seda)_{\geq \delta}$ and Borel subset $I \subset [0,\delta]^{k+r-1}$, the following holds
    $$
    \mu_{\XA_d}(E^I)= \mu_{\seda}(E) m_{\R^{k+r-1}}(I).
    $$
    \item For any $\delta > 0$, the sets $(\seda)_{ \geq \delta}$ and $(\seda)_{< \delta}$ are $\mu_{\seda}$-JM subsets of $\seda$.
    \item The set $\ueda$ is an open subset of $\seda$.
    \item $\mu_{\XA_d}((\cl_{\XA_d}(\seda) \setminus \ueda)^{(0,1)^{k+r-1}})=0$.
    \item The map {$(0,1)^{k+r-1} \times \ueda \rightarrow \XA_d, ((t, x) \mapsto a_{t}x)$} is open.
    \end{enumerate}
\end{thm}
{Using the results proved in Sections \ref{sec: The Cross-section}, \ref{sec: Explicit formula of the cross-sectional measure}, and \ref{sec:Properties of sed}, Theorem \ref{thm: Adeles} follows directly. We therefore omit the proof.}

\subsection{Description of $\mu_{\seda}$}
{This section aims to prove Proposition \ref{prop:pushforward general}, which is an analogue of Proposition~\ref{measure equal general} for the measure $\mu_{\seda}$.}

{Let us define the maps \(\tilde \psi, \tilde{\psi}^-: \sseda \rightarrow \ZZ_d\) by
    \begin{align}
        \label{defpsi 1}
        \tilde \psi &= (\vartheta \circ \phi \circ \pi, \psi_f), \\
        \tilde \psi^- &= (\vartheta^- \circ \phi \circ \pi, -\psi_f), \label{defpsi 2}
    \end{align}
where $\phi$ is defined as in~\eqref{eq: def phi general}, and
\[
\vartheta, \vartheta^- : \E_d \times \Sphere^{m_1} \times \cdots \times \Sphere^{n_{r-1}} \times \R^{n_r - 1} \times \R_{+} \rightarrow \E_d \times \Sphere^{m_1} \times \cdots \times \Sphere^{n_r} \times [0, \e]
\]
are defined by}
{\begin{align*}
    \vartheta(\Lambda, x, y, y') &= \left( w\left(x, \tfrac{(y, y')}{\|(y, y')\|}, \|(y, y')\|\right) u(x, y, y') \Lambda,\ x,\ \tfrac{(y, y')}{\|(y, y')\|},\ \|(y, y')\|^{n_r} \right),  \\
    \vartheta^-(\Lambda, x, y, y') &= \left( w\left(x, \tfrac{(y, y')}{\|(y, y')\|}, \|(y, y')\|\right) u(x, y, y') \Lambda,\ -x,\ -\tfrac{(y, y')}{\|(y, y')\|},\ \|(y, y')\|^{n_r} \right), 
\end{align*}
where for $(\alpha, \beta, \gamma) \in \R^{d-1} \times \R_+ \times \R_+$, the matrix $w(\alpha, \beta, \gamma) \in \SL_d(\R)$ is defined as
\begin{align}
    \label{eq:def v}
    w(\alpha, \beta, \gamma) = 
    \begin{pmatrix}
        I_{d - n_r} & \\
        & \gamma I_{n_r}
    \end{pmatrix}
    \begin{pmatrix}
        (\gamma^{n_r} \beta)^{-1/(d-1)} I_{d-1} & \alpha \\
        & \beta
    \end{pmatrix}.
\end{align}}

\noindent
Furthermore, the map $\psi_f: \sseda \rightarrow \hZp^d$ is defined as follows.
For $\tilde{\Lambda} \in \sseda$, {let us} write $\tilde{\Lambda} = (g_\infty, g_f)\SL_d(\Q)$ with $g_f \in K_f$, and define $\Lambda = \pi(\tilde{\Lambda}) = g_\infty \Z^d$. Since $\Lambda \in \ssed$, the vector $\vector(\Lambda)$, defined as in \eqref{defv-unique}, is the unique primitive vector of $\Lambda$ lying in $\Le^+$. The columns of $g_\infty$ form a basis for the lattice $\Lambda$. {Thus, by choosing a suitable representative of the coset $g_\infty \SL_d(\Z)$}, we may assume that the $d$-th column of $g_\infty$ is equal to $\vector(\Lambda)$. The uniqueness of $\vector(\Lambda)$ ensures that {if another representative $g_\infty \gamma$ with $\gamma \in \SL_d(\Z)$ also satisfies this condition, then we must have}
\[
g_\infty \bfe_d = \vector(\Lambda) = g_\infty \gamma \bfe_d,
\]
{which implies that $\gamma$ belongs to $H \cap \SL_d(\Z)$ (defined as in \eqref{eq: def H}). We now define}
\[
\psi_f(\Lambda) := g_f \bfe_d,
\]
{that is, if $g_f = (g_p)_{p \in \mathbf{P}}$, then $\psi_f(\tilde{\Lambda})$ is the element of $\hat{\Z}^d$ whose $p$-th coordinate is the {$d$-th} column of $g_p$. The discussion above shows that this definition is independent of the chosen representative, and hence $\psi_f$ is well defined.}

\begin{lem}
\label{lem: adelic psi continuous}
 The maps $\tilde \psi$ and $\tilde \psi^-$ are $K_f$-equivariant and continuous, {where the action of $K_f$ on $\ZZ_d$ is given by the product of the trivial action on $\E_d \times \Sphere^{n_1} \times \cdots \times \Sphere^{n_r} \times (0,\epsilon)$ and the usual action on $\widehat{\mathbb{Z}}^d$.}
\end{lem}
\begin{proof}
    The fact that $\tilde{\psi}$ (respectively, $\tilde{\psi}^-$) commutes with the $K_f$-action on $\sseda$ and on $\ZZ_d$ follows {immediately} from the definition of $\tilde{\psi}$ (respectively, $\tilde{\psi}^-$). From Lemma \ref{lem: General psi continuous}, it follows that $\vartheta \circ \phi \circ \pi$  (respectively, $\vartheta^- \circ \phi \circ \pi$) is continuous, so we have to establish the continuity of $\psi_f$. The proof of the latter fact follows along the same lines as the argument in Lemma 10.1 of \cite{SW22}, hence is skipped. Thus, the lemma follows.
\end{proof}

\begin{prop}
\label{prop:pushforward general}
    The measure $\mu_{\seda}$ satisfies
   { $$
    (\tilde \psi)_* \mu_{\seda} + (\tilde \psi^-)_* \mu_{\seda} =   \frac{m_1\cdots m_k n_1 \cdots  n_{r-1}}{\zeta(d)} \tmu^{d}. $$}
\end{prop}

\begin{proof}
First of all note that using Theorem \ref{thm: Adeles}(5),  Proposition \ref{measure equal general}, Lemmas~\ref{two points zero measure} and~\ref{lem: General psi continuous}, we can easily see that 
\begin{align}
        \label{eq: Pushforward genral}
        (\vartheta  \circ \phi \circ \pi)_* \mu_{\seda} + (\vartheta^-  \circ \phi \circ \pi)_* \mu_{\seda}=  \frac{m_1 \cdots m_k n_1 \cdots n_{r-1}}{\zeta(d)} (m_{\E_d} \times \mu^{(\Sphere^{m_1})} \times \cdots \times \mu^{(\Sphere^{n_r})} \times m_{\R}|_{[0,\e]}),
    \end{align}
{We denote by $P$ the natural projection}
{$$
P: \E_d \times \Sphere^{n_1} \times \cdots \times \Sphere^{n_r} \times [0,\e] \times \hZp^d \longrightarrow \E_d  \times \Sphere^{n_1} \times \cdots \times \Sphere^{n_r} \times [0,\e].
$$}
Then we have a commutative diagram:
\[
  \begin{tikzcd}[column sep = 4.5cm]
    \text{$\sseda$} \arrow{r}{ \text{$\tilde \psi$ (resp. $\tilde \psi^-$)}} \arrow{d}{\text{$\pi$}} & \text{$\E_d \times \Sphere^{m_1} \times \cdots \times \Sphere^{n_r} \times [0,\e] \times \hZp^d $} \arrow{d}{\text{$P$}} \\
    \text{$\ssed$} \arrow{r}{\text{$\vartheta  \circ \phi $ (resp. $ \vartheta^-  \circ \phi $)}} & \text{$ \E_d \times \Sphere^{m_1} \times \cdots \times \Sphere^{n_r} \times [0,\e] $}
  \end{tikzcd}
\]
Note that the measure $\mu_{\seda}$ is {invariant under the action of $K_f$}. By {the} $K_f$-equivariance of $\tilde{\psi}$ (respectively, $\tilde{\psi}^-$), {it follows that the pushforward measure} $(\tilde{\psi})_* \mu_{\seda}$ (respectively, $(\tilde{\psi}^-)_* \mu_{\seda}$) is {also $K_f$-invariant. Since $K_f$ acts transitively on $\hat{\Z}^d_\prim$, and $m_{\hat{\Z}^d_\prim}$ is the unique $K_f$-invariant probability measure on this space, we conclude} that $(\tilde{\psi})_* \mu_{\seda}$ (respectively, $(\tilde{\psi}^-)_* \mu_{\seda}$) must be {equal to} the product of the measures $P_* \big((\vartheta \circ \phi)_* \mu_{\sed}\big)$ (respectively, $P_* \big((\vartheta^- \circ \phi)_* \mu_{\sed}\big)$) and $m_{\hat{\Z}^d_\prim}$. Also note that by commutativity of the diagram, we have $P \circ \vartheta  \circ \phi=  \vartheta  \circ \phi \circ \pi$ and $P \circ \vartheta^-  \circ \phi=  \vartheta^-  \circ \phi \circ \pi$. Thus proposition follows from \eqref{eq: Pushforward genral}.
\end{proof}


\subsection{Thickening preserves JM}

{In this section, we show that if a set \( E \subset \seda \) is Jordan measurable (as defined in Definition~\ref{defJM}), then the thickened set \( E^{J^\tau} \) is also Jordan measurable. We now state the precise result.}

\begin{lem}
\label{Translated is JM}
{ For any \(\mu_{\seda}\)-Jordan measurable subset \(E\) and any \(\tau > 0\), the thickened set \(E^{J^\tau}\) (defined in \eqref{eq: def J^T} and \eqref{eq: defE^I}) is \(\mu_{\XA_d}\)-Jordan measurable.}
\end{lem}
\begin{proof}
 We want to show that $\mu_{\XA_d}(\partial_{\XA_d}(E^{J^\tau}))=0$. This will follow once we show 
    \begin{equation}
    \label{partial equaliity}
        \partial_{\XA_d}(E^{J^\tau}) \subset (\seda \setminus \ueda)^{J^\tau} \cup (\partial_{\seda} E)^{J^\tau} \cup \seda^{\partial_{\R^{k+r-1}}( J^\tau)},
    \end{equation}
    {where $\ueda$ is defined as in \eqref{eq: def adeles}.}

   Indeed, all the sets on the right hand side of \eqref{partial equaliity} are $\mu_{\XA_d}$-null: The first one by choice of $\ueda$, and the second one by the assumption on $E$ and the definition of the measure $\mu_{\seda}.$ {For the third set, note that $\pi(\seda^{\partial_{\R^{k+r-1}} (J^\tau)})$ is contained in the union of sets
    $$
    {\X_d} (\{(x_1, \ldots, x_{k+r}) \in \R^{m_1} \times \cdots \times \R^{m_k} \times \R^{n_1} \times \cdots \times \R^{n_r}: \|x_i\| \in \{1, e^\tau\} \} ),
    $$
    as $i$ varies in $1, \ldots, k+r-1$, along with the set
    $$
    {\X_d}(\{(x_1, \ldots, x_{k+r}) \in \R^{m_1} \times \cdots \times \R^{m_k} \times \R^{n_1} \times \cdots \times \R^{n_r}: \|x_1\|^{m_1}\cdots \|x_{k+r-1}\|^{n_{r-1}} \in \{1, e^{n_r\tau}\} \} ).
    $$}
    By Lemma \ref{SW22Lem8.8}, we get that each of the above sets in the union has  measure zero. Thus by Theorem \ref{thm: Adeles}, we get that $\seda^{\partial_{\R^{k+r-1}} (J^\tau)}$ has zero measure.

\medskip

{  We now prove \eqref{partial equaliity}. Let \( x \in \partial_{\XA_d}(E^{J^\tau}) \). Then there exists a sequence \( t_\ell \in J^\tau \) and \( y_\ell \in E \) such that \( a_{t_\ell} y_\ell \to x \). Passing to a subsequence, we may assume \( t_\ell \to t_0 \in J^\tau \) and \( y_\ell \to y_0 \in \cl_{\XA_d}(E) \), so that \( x = a_{t_0} y_0 \). {Since \( \seda \) is closed in \( \XA_d \), we have \( \cl_{\XA_d}(E) = \cl_{\seda}(E) \), so in particular \( y_0 \in \cl_{\seda}(E) \subset \seda \).}

\smallskip

 We now consider different possibilities:

\textbf{Case 1:} \( y_0 \notin \ueda \). Then \( x \) lies in the right-hand side of \eqref{partial equaliity}.

\textbf{Case 2:} \( y_0 \in \ueda \) and \( t_0 \in \partial_{\R^{k+r-1}}(J^\tau) \). Then again, \( x \) belongs to the right-hand side.

\textbf{Case 3:} \( y_0 \in \ueda \), \( t_0 \in \Int_{\R^{k+r-1}}(J^\tau) \), but \( y_0 \notin \Int_{\seda}(E) \). Then \( y_0 \in \partial_{\seda}(E) \), and once again \( x \) lies in the right-hand side.}

{
\textbf{The remaining case:} 
\[
y_0 \in \ueda \cap \Int_{\seda}(E)
\quad \text{and} \quad
t_0 \in \Int_{\R^{k+r-1}}(J^\tau).
\]
Let \( t_1 = (1/2, \ldots, 1/2) \in \R^{k+r-1} \). By Theorem~\ref{thm: Adeles}, the map
\[
(0,1)^{k+r-1} \times \ueda \longrightarrow \XA_d,
\qquad
(t, y) \longmapsto a_{t_0 - t_1} a_t y
\]
is open. In particular, \( E^{J^\tau} \) contains the open set
\[
\left\{ a_t y : 
\begin{array}{l}
t \in \Int_{\R^{k+r-1}}(J^\tau) \cap \left(t_0 - t_1 + (0,1)^{k+r-1}\right), \\
y \in \ueda \cap \Int_{\seda}(E)
\end{array}
\right\},
\]
which contains the point \( x = a_{t_0} y_0 \). This contradicts the assumption that \( x \in \partial_{\XA_d}(E^{J^\tau}) \), i.e., that \( x \notin \Int_{\XA_d}(E^{J^\tau}) \). Therefore, no such \( x \) exists, completing the proof of \eqref{partial equaliity} and hence the lemma.}
\end{proof}


\section{Cross-section Correspondence}
\label{sec:Cross-section Correspondence}
This section is devoted to connecting dynamics on the cross-section we have constructed in the previous sections, with Diophantine approximation. More concretely, we relate successive time tuples $t_i$ for which $a_{t_i}\tilde \Lambda_\theta \in \seda,$ to Diophantine properties of $\theta$. Let us introduce some notation before proceeding. For a Borel subset $E$ of $\seda$, $\beta \in \R$, $x \in \XA_d$ and $T>0$, we define $N(x,T,\beta, E)$ as the set
\begin{equation} \label{def:N(x,T,E)}
N(x,T,\beta, E) :=\left\{ 
(t_1, \ldots, t_{k+r-1}) \in (\mathbb{R}_{\geq 0})^{k+r-1} : 
\left.
\begin{array}{l}
a_t x \in E, \\ t_j \leq T \text{ for all } k+1 \leq j \leq k+r-1, \\
0 \leq \sum_{i=1}^k m_it_i \leq \sum_{j=1}^{r-1}n_jt_{k+j}+  n_rT -\beta
\end{array}
\right.
\right\}.
\end{equation}

\begin{lem}
\label{Number ineq}
    Let $\theta  \in \M_{m \times n}(\R)$ be given. Suppose that $(p_l, q_l) \in \Z^{m} \times \Z^{n} $ is the sequence of $\e$-approximations (ordered according to increasing $\|q_l\|$), which satisfy the condition that the $n$-th component of $q_l$ is non-zero and $\disp(\theta,p_l , q_l) \neq 0$.
    Define a sequence ${t}_l= (t_{1,l}, \ldots, t_{k+r-1,l} )$ as solutions of the equations
    \begin{align*}
        e^{-t_{i,l} } &= \|\rho_i(p_l + \theta q_l)\| \text{ for all } 1\leq i \leq k \\
        e^{t_{i,l} } &= \| \rho_{i-k}'(q_l)\| \text{ for all } k+1\leq i \leq k+r-1. \\
    \end{align*}
    Then, if we define $\Y_T(\theta)= \{{t}_l: \|q_l\| \leq e^T \}$, we have that 
    $$
    N(\tilde \Lambda_\theta, T,\log \e , \seda ) \cap ((-\log \eta_1, \infty) \times \cdots \times (-\log \eta_k, \infty) \times \R^{r-1}) \subset \Y_T(\theta).
    $$
\end{lem}
\begin{proof}
    Clearly if {$t= (t_1, \ldots, t_{k+r-1}) \in N(\tilde \Lambda_\theta, T,\log \e, \seda ) \cap ((-\log \eta_1, \infty) \times \cdots \times (-\log \eta_k, \infty) \times \R^{r-1})$}, then there exists a vector $(p,q)\in (\Z^{m} \times \Z^{n} )_\prim$ such that $ a_{t} ( p + q \theta, q) \in \Le$. This implies that for $i=1,\ldots, k$, we have $ \|\rho_i(p + \theta q)\| = e^{-t_i} \leq \eta_i$ since $t_i \geq - \log \eta_i  $. Similary, for $1 \leq j \leq r-1$, we have {$\|\rho_j'(q)\|= e^{t_{k+j}} \leq e^T$. Also, we have 
    $$
    \|\rho_r'(q)\|^{n_r} \leq \e e^{m_1t_1+ \cdots +m_kt_k- n_1t_{k+1} -\cdots -n_{k+r-1}t_{k+r-1}} \leq \e e^{n_rT-\log \e }= e^{n_rT}.
    $$ 
    \noindent Finally we have $\disp(\theta,p,q)= \left( \prod_{i=1}^k \|\rho_i(e^{t_i}(p+\theta q))\|^{m_i} \right). \left(\prod_{j=1}^r  \|\rho_j'(e^{-t_{j+k}}q)\|^{n_j}\} \right) \in  (0,\e]$,} where $t_{k+r}= 1/n_r(m_1t_1 + \cdots + m_kt_k - n_1y_{k+1} -\cdots -n_{r-1}y_{k+r-1})$.
    Thus, $t$ belongs to $\Y_T(\theta)$. This proves the lemma.
\end{proof}

\begin{lem}
\label{lem: Important Commutativity}
    Keeping the notations as in Lemma \ref{Number ineq}, for almost every $\theta \in \M_{m \times n}(\R)$, the following holds. The map $(p_{l},q_l) \mapsto t_l$ is a {two-to-one} map. In fact, if $(p_l, q_l)$ maps to $t_l$, then the other one mapping to $t_l$ is $(-p_l, -q_l)$. Also for each $l$,  we have $a_{t_l}\tilde\Lambda_\theta \in \ssed$. Moreover, if the $n$-th coordinate of $(q_l)$ is positive, then
    \begin{align}
        \tilde \psi({a_{t_l}\tilde\Lambda_\theta}) &= (\lambda_d(\theta, p_l, q_l),\proj(\theta,p_l,q_l), \disp(\theta,p_l,q_l) ,(p_l,q_l) ) \nonumber \\ \label{eq: Image under psi}
         \tilde \psi^-({a_{t_l}\tilde\Lambda_\theta}) &= (\lambda_d(\theta, -p_l, -q_l),\proj(\theta,-p_l,-q_l), \disp(\theta,-p_l,-q_l) ,(-p_l,-q_l) ). 
    \end{align}
\end{lem}
\begin{proof}
    Suppose  $(p,q)$ and $(p', q')$ are $\e$-approximates of $\theta$ (satisfying conditions of Lemma \ref{Number ineq}) such that both of these {vectors} map to same vector $t \in \R^{k+r-1}$. Then by definition of $t$, we have 
    \begin{align}
        \label{eq: temp equality}
        \|\rho_i(p + q\theta)\|= \|\rho_i(p' + q'\theta)\| \ \ \ \ \ (=e^{-t_i})
    \end{align}
    for all $i=1, \ldots, k$. Now, for fixed $(p,q) \neq \pm (p',q')$, the measure of solutions to \eqref{eq: temp equality} is zero. Since $\Z^{2d}$ is countable, we get that for almost every $\theta \in \M_{m \times n}(\R)$, \eqref{eq: temp equality} implies that $(p,q)= \pm (p', q')$. This proves the first part of the lemma. The second part is clear.
\end{proof}

\subsection{Lower Bound for Counting}
\label{Reduction of Theorem}

{The above correspondence between best approximation and the hitting to the cross-section gives the following lower bound on the counting of best approximation with constraints.}

\begin{lem}
\label{mainmult3}
    For almost every $\theta \in \M_{m \times n}(\R)$, the following holds. For any $\mu_{\seda}$-JM set $E \subset \seda$ and $\beta>0$, we have 
    \begin{align}
    \label{eq:temp7}
        \liminf_{T \rightarrow \infty} \frac{1}{m_{\R^{k+r-1}}(J^T)} \# N(a_{\eta_0}\tilde\Lambda_\theta, T,\beta, E) \geq \mu_{\seda}(E),
    \end{align}
    where $\eta_0$ is defined as
    \begin{align}
\label{def eta 0}
    \eta_0 := (-\log \eta_1, \ldots, -\log \eta_k, 0, \ldots, 0) \in \R^{k+r-1}.
\end{align}
\end{lem}
\begin{proof}

We claim that the \eqref{eq:temp7} holds for all $\theta \in \M_{m \times n}(\R)$ such that {$\Tilde{\Lambda}_\theta$} is $(a_t, \mu_{\XA_d})$-generic. To see this, fix a $\mu_{\seda}$-JM subset $E \subset \seda$. By Theorem \ref{thm: Adeles}(8) and Lemma \ref{JMintersect}, for any small enough $\alpha>0$, we have $E \cap (\seda)_{\geq n \alpha}$ is $\mu_{\seda}$-JM. By Lemma \ref{Translated is JM}, we have that $(E \cap (\seda)_{\geq n \alpha})^{J^{\alpha}}$ is $\mu_{\XA_d}$-JM. Using the fact that {$\Tilde{\Lambda}_\theta$} is $(a_t, \mu_{\XA_d})$-generic, along with Theorem \ref{muJM} and \eqref{eq: measure of J T}, we have
    \begin{align}
    \label{eq: nn temp 8}
     \lim_{T \rightarrow \infty}  \frac{m_1 \cdots m_kn_1\cdots n_{r-1}. (k+r-1)!}{T^{k+r-1}c_{k+r-1}(n_1, \ldots, n_r)}  \int_{J^T} \ind_{(E \cap (\seda)_{\geq n \alpha})^{J^{\alpha}}}({ a_{\eta_0} a_{t}\Tilde{\Lambda}_\theta})\, dt = \mu_{\XA_d}(a_{\eta_0}^{-1}(E \cap (\seda)_{\geq n \alpha})^{J^{\alpha}}).
    \end{align}
    Now using left invariance of $\mu_{\XA_d}$ and Theorem \ref{thm: Adeles}(7), we get that $$\mu_{\XA_d}(a_{\eta_0}^{-1}(E \cap (\seda)_{\geq  n \alpha})^{J^{\alpha}})= m_{\R^{k+r-1}}(J^{\alpha}) \mu_{\seda}( E \cap (\seda)_{\geq n \alpha}).$$
    
   \noindent Also, note that for all $\beta>0$, we have 
   \begin{align}
       \label{eq: nn temp 6}
        \int_{J^T} \ind_{(E \cap (\seda)_{\geq n \alpha})^{J^\alpha}}({ a_{\eta_0} a_{t}\Tilde{\Lambda}_\theta})\, dt=  m_{\R^{k+r-1}}(J^{\alpha}) \# N( a_{\eta_0}\Tilde{\Lambda}_\theta,T,\beta, E \cap (\seda)_{\geq n \alpha} ) +  O_{\alpha, \beta}(T^{k+r-2}).
   \end{align}
   { To see this, consider the map
\[
J^\alpha \times \left(E \cap (\seda)_{\geq n \alpha}\right) \to \XA_d, \quad (x, t) \mapsto a_t x.
\]
This map is injective by definition of $(\seda)_{\geq n \alpha}$. Consequently, for any $t \in \R^{k+r-1}$ and $x \in \XA_d$ such that
\[
a_t x \in (E \cap (\seda)_{\geq n \alpha})^{J^\alpha},
\]
there exists a unique decomposition $t = t' + t''$ with $t'' \in J^\alpha$ and
\[
a_{t'} x \in E \cap (\seda)_{\geq n \alpha}.
\]
Thus for any $x \in \XA_d$, we have
\begin{align}
\label{eq: nn temp 7}
    \{t \in \R^{k+r-1} : a_t x \in (E \cap (\seda)_{\geq n \alpha})^{J^\alpha} \}
    = \{t \in \R^{k+r-1}: a_t x \in E \cap (\seda)_{\geq n \alpha}\} + J^\alpha.
\end{align}
Intersecting both sides of~\eqref{eq: nn temp 7} with $J^T$ and putting $x= a_{\eta_0} \tilde{\Lambda}_\theta$, we get that
\begin{align}
    \label{eq: nn temp 10}
    \{t \in J^T : a_t a_{\eta_0} \tilde{\Lambda}_\theta \in (E \cap (\seda)_{\geq n \alpha})^{J^\alpha} \}
\end{align}
differs from the set
\begin{align}
\label{eq: nn temp 9}
    N(a_{\eta_0} \tilde{\Lambda}_\theta, T, \beta, E \cap (\seda)_{\geq n \alpha}) + J^\alpha
\end{align}
by points contained in a thickened neighbourhood of the boundary of $J^T$, the thickness of which is bounded in terms of $\alpha$ and $\beta$, that is, there exists $\delta>0$ depending only on $\alpha$ and $\beta$ such that for all $T>1$, the symmetric difference between the sets \eqref{eq: nn temp 10} and \eqref{eq: nn temp 9} is contained in the set
$$
   \left\{t \in \R^{k+r-1} : \inf_{s \in \partial (J^T)} \|t-s\|_{\infty}< \delta  \right\}.
$$}
Since the measure of this thickened boundary is $O_{\alpha, \beta}(T^{k+r-2})$, and the measure of the set in~\eqref{eq: nn temp 10} equals the left-hand side of~\eqref{eq: nn temp 6}, while the measure of the set in~\eqref{eq: nn temp 9} equals the first term on the right-hand side of~\eqref{eq: nn temp 6}---using the fact that $J^\alpha \subset [0, n\alpha]^{k+r-1}$ and that $\|t - s\|_\infty \geq 2n\alpha$ for all distinct $s, t \in N(a_{\eta_0} \tilde{\Lambda}_\theta, T, \beta, E \cap (\seda)_{\geq n\alpha})$---we obtain the estimate~\eqref{eq: nn temp 6}.

Combining~\eqref{eq: nn temp 8} and~\eqref{eq: nn temp 6}, we conclude that
\begin{align*}
    \liminf_{T \to \infty} \frac{1}{m_{\R^{k+r-1}}(J^T)} \# N(a_{\eta_0} \tilde{\Lambda}_\theta, T, \beta, E) 
    &\geq \lim_{T \to \infty} \frac{m_1 \cdots m_k n_1 \cdots n_{r-1} \cdot (k+r-1)!}{T^{k+r-1} c_{k+r-1}(n_1, \ldots, n_r)} \\
    &\quad \cdot \# N(a_{\eta_0} \tilde{\Lambda}_\theta, T, \beta, E \cap (\seda)_{\geq n \alpha}) \\
    &= \mu_{\seda}(E \cap (\seda)_{\geq n \alpha}).
\end{align*}
Since $\alpha > 0$ was arbitrary and $\mu_{\seda}(E \cap (\seda)_{\geq n \alpha}) \to \mu_{\seda}(E)$ as $\alpha \to 0$, the claim follows. The lemma then follows from Proposition~\ref{cor1 Generic}.

\end{proof}


\section{Upper Bound for Counting}
\label{sec: Upper Bound for Counting}

{In Section \ref{Reduction of Theorem}, we established an almost one-to-one correspondence between the visits of the lattice $\Tilde{\Lambda}_\theta$ to the cross-section (up to time $T$) and the set of Diophantine approximates $(p, q)$ with $|q| \leq e^T$. However, since this correspondence is not exact but almost, it yields only a lower bound on the number of Diophantine approximates. The goal of this section is to establish the complementary upper bound. More precisely, we aim to prove the following result.}

\begin{thm}
\label{thm:N^sharp(T,theta)}
    For $T>1$, define $D^\sharp (T,\theta)$ as {the} number of $\e$-approximates $(p,q)$ of $\theta$ satisfying $\|q\| \leq e^T$. For {Lebesgue almost every} $\theta \in \M_{m \times n}(\R)$ we have
    \begin{align}
        \label{eq:N^sharp(T,theta)}
        \limsup_{T \rightarrow \infty} \frac{1}{T^{k+r-1}} D^\sharp (T, \theta) \leq \frac{\e}{\zeta(d)} \frac{m_{\R^d}(B^d_1)}{(k+r-1)!} c_{k+r-1}(n_1, \ldots, n_r),
    \end{align}
    where $c_{k+r-1}(n_1, \ldots, n_r)$ is defined as in \eqref{eq: def c k r 1}.
\end{thm}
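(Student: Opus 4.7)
The plan is to combine a thickening argument with the Birkhoff genericity of $\Lambda_\theta$ (Corollary \ref{cor: Birkhoff genericity}) and Siegel's mean value formula for primitive lattice vectors. For each $\e$-approximate $(p,q)$ of $\theta$ satisfying $\|q\|\leq e^T$, the vector $v=(p+\theta q,q)$ is a primitive vector of $\Lambda_\theta$, and there is a unique ``normalising'' time $t(v)\in\R^{k+r-1}$, whose coordinates are $-\log\|\rho_i(p+\theta q)\|$ and $\log\|\rho_j'(q)\|$, such that $a_{t(v)}v\in\Le$. Unpacking the definitions, the constraints $\|\rho_i(p+\theta q)\|\leq\eta_i$, $\disp\leq\e$ and $\|q\|\leq e^T$ translate (up to $O(1)$ additive errors coming from the $\eta_i$) into $t(v)\in J^T$; in particular the inequality $0\leq m_1 t_1+\cdots+m_k t_k-n_1 s_1-\cdots-n_{r-1}s_{r-1}\leq n_r T$ from \eqref{eq: def J^T} encodes exactly $1\leq\|\rho_r'(q)\|\leq e^T$ together with $\disp\leq\e$.

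Next, fix $\tau>0$ and set
\[
W_\tau:=\bigl\{(x,y)\in\R^m\times\R^n:\|\rho_i(x)\|\in[e^{-\tau},1]\,\forall i,\ \|\rho_j'(y)\|\in[1,e^\tau]\,\forall j<r,\ \disp(x,y)\leq \e\, e^{C\tau}\bigr\}
\]
for a suitable constant $C=C(m_i,n_j)$, where $\disp$ is extended to all of $\R^d$ by the product formula \eqref{defdisp}. A direct calculation with the diagonal action shows that $a_{t(v)+s}v\in W_\tau$ for every $s\in[-\tau,0]^{k+r-1}$. By Fubini,
\[
\tau^{k+r-1}\,D^\sharp(T,\theta)\leq \int_{J^{T+C\tau}} F_\tau(a_t\Lambda_\theta)\,dt,\qquad F_\tau(\Lambda):=\#(\Lambda_{\prim}\cap W_\tau).
\]
An iterated integral, using the identity $\int_{\|x\|\in[e^{-\tau},1]}\|x\|^{-l}\,dx=l\,m_{\R^l}(B_1^l)\,\tau$ (valid for any norm on $\R^l$ by a radial decomposition), together with $m_{\R^d}(B_1^d)=\prod_i m_{\R^{m_i}}(B_1^{m_i})\prod_j m_{\R^{n_j}}(B_1^{n_j})$ for the sup norm on $\R^d$, yields
\[
\lim_{\tau\to 0}\frac{m_{\R^d}(W_\tau)}{\tau^{k+r-1}}=\e\,(m_1 m_2\cdots m_k n_1\cdots n_{r-1})\,m_{\R^d}(B_1^d).
\]

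Siegel's mean value formula for primitive vectors then gives $\int_{\X_d}F_\tau\,d\mu_{\X_d}=m_{\R^d}(W_\tau)/\zeta(d)$. Combining this with Birkhoff genericity and the normalisation $m_{\R^{k+r-1}}(J^T)/T^{k+r-1}\to c_{k+r-1}(n_1,\ldots,n_r)/\bigl(m_1\cdots n_{r-1}\,(k+r-1)!\bigr)$ from \eqref{eq: measure of J T}, and finally letting $\tau\to 0$, produces exactly the claimed upper bound
\[
\limsup_{T\to\infty}\frac{D^\sharp(T,\theta)}{T^{k+r-1}}\leq\frac{\e}{\zeta(d)}\cdot\frac{m_{\R^d}(B_1^d)}{(k+r-1)!}\cdot c_{k+r-1}(n_1,\ldots,n_r).
\]
The main technical obstacle is that $F_\tau$ is a Siegel transform: it is continuous but unbounded on $\X_d$ (it blows up in the cusp), so it is \emph{not} in $\R\cdot 1+\C_c(\X_d)$ and Corollary \ref{cor: Birkhoff genericity} does not apply directly. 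I would resolve this by sandwiching $\mathbb{1}_{W_\tau}$ between compactly supported continuous functions $g_\tau^\pm$, forming their Siegel transforms, and truncating at level $M$ to produce bounded continuous functions to which tight convergence (Theorem \ref{muJM}) applies; the remaining cusp tail would be controlled via a Rogers-type $L^2$ bound on Siegel transforms combined with a Kleinbock--Margulis quantitative non-divergence estimate, guaranteeing that $\{a_t\Lambda_\theta\}_{t\in J^T}$ spends asymptotically negligible time in the region $\{F_\tau>M\}$ as $M\to\infty$.
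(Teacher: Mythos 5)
Your overall architecture (thicken each approximate into a cube of flow-times, dominate $D^\sharp(T,\theta)$ by an orbit integral of the primitive Siegel transform of $W_\tau$, apply Siegel's mean value theorem and genericity, match constants against \eqref{eq: measure of J T}) is coherent, and your volume computation of $m_{\R^d}(W_\tau)/\tau^{k+r-1}$ does reproduce the stated constant. However, the first step already has a genuine gap: the claimed translation of the constraints into $t(v)\in J^T$ ``up to $O(1)$ additive errors coming from the $\eta_i$'' is false in the direction you need. With $t_i=-\log\|\rho_i(p+\theta q)\|$ and $s_j=\log\|\rho_j'(q)\|$ one has $\sum_i m_it_i-\sum_{j<r}n_js_j=-\log\disp(\theta,p,q)+n_r\log\|\rho_r'(q)\|$, so the upper constraint in \eqref{eq: def J^T} is violated by the unbounded amount $-\log\disp$ for very good approximates --- exactly the ones that no $O(1)$ enlargement of $J^T$ absorbs; moreover approximates with $\rho_j'(q)=0$ for some $j$ have no normalising time at all. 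Hence the Fubini inequality $\tau^{k+r-1}D^\sharp(T,\theta)\le\int_{J^{T+C\tau}}F_\tau(a_t\Lambda_\theta)\,dt$ fails as stated. It can be repaired, but only by adding a separate almost-everywhere counting input (e.g.\ a convergence Borel--Cantelli / multiplicative Khintchine--Groshev argument showing that for a.e.\ $\theta$ the approximates with $\|q\|\le e^T$ and $\disp\le e^{-\delta T}$, or with some $\rho_j'(q)=0$, number $o(T^{k+r-1})$), which your proposal does not supply. This is precisely the asymmetry the paper emphasizes in Section \ref{sec:keying}: time visits inject into approximates (Lemma \ref{Number ineq}), not conversely.

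The second gap is the one you flag yourself, but it is not merely technical. Corollary \ref{cor: Birkhoff genericity} gives tight convergence, hence controls orbit averages of bounded continuous functions only; for an \emph{upper} bound you must show that $\frac{1}{m_{\R^{k+r-1}}(J^T)}\int_{J^T}F_\tau(a_t\Lambda_\theta)\,\chi_{\{F_\tau>M\}}(a_t\Lambda_\theta)\,dt$ is small, uniformly in $T$, for a.e.\ $\theta$ as $M\to\infty$. Rogers' $L^2$ bound is a statement about $(\X_d,m_{\X_d})$ and does not transfer to individual orbits; converting it into an a.e.\ statement along the skewed multiparameter regions $J^T$ requires a maximal inequality or quantitative nondivergence-with-weights estimate for the higher-rank flow that is nowhere available in the paper and would itself be a substantive piece of work. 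Note that the paper sidesteps both issues by proving this theorem with no dynamics at all: Theorem \ref{thm:N(T, theta)} counts $\e^*$-approximates with a divisibility constraint using the Wang--Yu refinements (Theorems \ref{thmWYYK} and \ref{thm2WYYK}) of Gallagher and Schmidt, and inclusion--exclusion over primes then yields the factor $1/\zeta(d)$; so your route is genuinely different, but in its present form it rests on two unproved (and, in the first case, incorrectly asserted) steps.
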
 \vspace{0.2in}

To establish Theorem \ref{thm:N^sharp(T,theta)}, we will need two theorems of Wang and Yu \cite{WYYK} which refine the famous theorems of Gallagher \cite{Gallagher} and Schmidt \cite{S60} on almost sure asymptotic counts of solutions to Diophantine inequalities. We will need Gallagher's property P as introduced in \cite{Gallagher}. A subset $A$ of $\R^m$ is said to satisfy property P if 
\begin{align}
   \label{Property P}
     (x_1, \ldots, x_m) \in A \text{ implies that } (x_1', \ldots, x_m')\in A \text{ for any } 0 \leq x_i' \leq x_i (1\leq i \leq m).
   \end{align}

\begin{thm}[{\cite[Thm.~1]{WYYK}}]
\label{thmWYYK}
   Let $\delta> 0$ be arbitrary. Let $(A_q)_{q \in \N}$ be a sequence of measurable subsets of $[0,1)^m$, satisfying property P. Further assume that $m_{\R^m}(A_q)$ is a decreasing function of $q$, where $m_{\R^m}$ is Lebesgue measure on $\R^m$ giving measure $1$ to the  set $[0,1)^m$. Then, for almost every $\theta= (\theta_1, \ldots, \theta_m)$, we have
   \begin{align*}
       Num(T,\theta, (A_q))= &\sum_{q=1}^T m_{\R^m}(A_q) \\ 
       &+ O\left( \left(\sum_{q=1}^T m_{\R^m}(A_q) \right)^{1/2} \left(\sum_{q=1}^T m_{\R^m}(A_q) q^{-1} \right)^{1/2} 
       \left(\log \left(\sum_{q=1}^T m_{\R^m}(A_q)\right)\right)^{2+\delta} \right),
   \end{align*}
   for all $T \in \N$, where $Num(T, \theta, (A_q))$ denote the number of integers satisfying $1\leq q \leq T$ and $ \{q\theta \} \in A_q.$
   \end{thm}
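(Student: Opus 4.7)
The statement is Gallagher-type quantitative counting theorem, and the natural route is the variance method of Erd\H{o}s--Koksma combined with a Borel--Cantelli argument, refined to capture the weighted factor $\sum_{q\le T} m_{\R^m}(A_q)q^{-1}$ that appears in the error term. The plan is to estimate the $L^2$ fluctuation of $N(T,\theta,(A_q))$ about its mean $M(T):=\sum_{q\le T}m_{\R^m}(A_q)$ via Fourier analysis on the torus, then apply Chebyshev along a suitably sparse subsequence $T_j$ and use monotonicity provided by Property P to interpolate between the $T_j$.

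Concretely, write each indicator as a Fourier series $\mathbf{1}_{A_q}(x)=\sum_{k\in\Z^m}\widehat{\mathbf{1}}_{A_q}(k)\,e^{2\pi i\langle k,x\rangle}$ with $\widehat{\mathbf{1}}_{A_q}(0)=m_{\R^m}(A_q)$. Then
\[
N(T,\theta,(A_q))-M(T)=\sum_{q=1}^{T}\sum_{k\neq 0}\widehat{\mathbf{1}}_{A_q}(k)\,e^{2\pi i q\langle k,\theta\rangle}.
\]
Squaring and integrating over $\theta\in[0,1)^m$, the orthogonality relation $\int e^{2\pi i(q k-q'k')\cdot\theta}\,d\theta=\mathbf{1}_{qk=q'k'}$ reduces the variance to a sum over pairs $(q,k),(q',k')$ with $qk=q'k'$. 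Parametrising by $\ell=\gcd(q,q')$ and $k_0=qk/\gcd=q'k'/\gcd$, the variance becomes essentially
\[
\sum_{k_0\neq 0}\Bigl|\sum_{q\le T,\,q=aq_1}\widehat{\mathbf{1}}_{A_q}(q_1 k_0/\gcd)\Bigr|^{2},
\]
and one then bounds $|\widehat{\mathbf{1}}_{A_q}(k)|$ using Property P: the Koksma--Hlawka--type estimate for indicators of P-sets gives $|\widehat{\mathbf{1}}_{A_q}(k)|\le C\, m_{\R^m}(A_q)\prod_i\min(1,\|k_i\|^{-1})$ up to lower order terms. Summing this bound dyadically in $\|k\|$ and exploiting the GCD structure yields the crucial bound
\[
\int_{[0,1)^m}|N(T,\theta)-M(T)|^{2}\,d\theta \;\lesssim\; M(T)\cdot\sum_{q=1}^{T}\frac{m_{\R^m}(A_q)}{q},
\]
which is exactly the product of the two quantities appearing in the claimed error term.

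From here one applies Chebyshev's inequality to obtain, for any $\delta>0$, a measurable exceptional set of measure $\ll (\log M(T))^{-1-\delta/2}$ outside which
\[
|N(T,\theta)-M(T)|\ll M(T)^{1/2}\Bigl(\sum_{q\le T}m_{\R^m}(A_q)/q\Bigr)^{1/2}(\log M(T))^{1+\delta/2}.
\]
Choosing the subsequence $T_j$ so that $M(T_j)$ grows geometrically (say $M(T_j)\asymp (1+\tfrac{1}{j})^{j}$) makes the measures of exceptional sets summable, so Borel--Cantelli yields the desired bound at the times $T_j$ for almost every $\theta$. Finally, monotonicity of $T\mapsto N(T,\theta)$ together with the fact that $m_{\R^m}(A_q)$ is decreasing (and hence $M(T)$ grows regularly) allows us to fill in all $T\in\N$ between successive $T_j$, absorbing the interpolation error into an extra factor of $(\log M(T))^{2+\delta}$ rather than $(\log M(T))^{1+\delta/2}$.

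The main technical obstacle is the second paragraph: obtaining the weighted factor $\sum m_{\R^m}(A_q)/q$ rather than the looser $M(T)$ one would get from a naive Cauchy--Schwarz. This requires both the Property P hypothesis (to control Fourier coefficients pointwise, not just on average) and a careful dyadic bookkeeping of the GCD parametrisation $qk=q'k'$; the weight $1/q$ emerges from counting how many $q\le T$ share a given divisor. A secondary technicality is that $\widehat{\mathbf{1}}_{A_q}(k)$ is not literally a product unless $A_q$ is a box, so one must either pass to boxes using the P-property (which says $A_q$ is a down-set) or use the Koksma--Hlawka inequality adapted to down-sets to get the required product bound; this is where the exponent $2+\delta$ in the logarithm (as opposed to $3/2+\delta$ in stronger settings) is paid.
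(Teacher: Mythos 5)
The paper does not prove this statement at all: it is quoted verbatim as Theorem~1 of Wang and Yu \cite{WYYK} (a refinement of the metric counting theorems of Gallagher \cite{Gallagher} and Schmidt \cite{Schmidt}) and is used as a black box in the proof of Theorem \ref{thm:N(T, theta)}. So there is no internal argument to compare yours against; what can be assessed is whether your sketch would actually prove the Wang--Yu statement. Your overall architecture --- a second-moment (variance) estimate, Chebyshev along a sparse sequence of times, Borel--Cantelli, and interpolation between the sparse times using monotonicity and Property P --- is indeed the classical framework behind all results of this type, so the strategy is the right one.

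However, there is a genuine gap precisely at the analytic heart of the argument. The pointwise bound you invoke, $|\widehat{\mathbf{1}}_{A_q}(k)|\le C\,m_{\R^m}(A_q)\prod_i\min(1,\|k_i\|^{-1})$, is false already for $m=1$ and $A_q=[0,a]$: there $|\widehat{\mathbf{1}}_{A_q}(k)|=|\sin(\pi k a)|/(\pi|k|)$, which is of size $a$ for every $|k|\lesssim 1/a$ and hence far exceeds $a/|k|$ in that range; the correct bound is $\min\bigl(a,\,1/(\pi|k|)\bigr)$, and it is not of the ``measure times decay'' form you need. With the corrected bound, the dyadic summation over the resonance set $qk=q'k'$ no longer visibly produces the weighted factor $\sum_{q\le T}m_{\R^m}(A_q)q^{-1}$; extracting that $1/q$ weight from the gcd bookkeeping is exactly the content of the Wang--Yu refinement over Schmidt, and your sketch asserts it rather than carries it out. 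In addition, a general Property-P set is a down-set, not a box, so even a product-type Fourier bound requires the decomposition-into-boxes (or discrepancy/Koksma--Hlawka) argument that you only gesture at. As written, the key variance inequality $\int_{[0,1)^m}|N(T,\theta)-M(T)|^2\,d\theta\lesssim M(T)\sum_{q\le T}m_{\R^m}(A_q)q^{-1}$ is unproved, and every subsequent step (Chebyshev, Borel--Cantelli, interpolation) depends on it; for the purposes of this paper the honest course is simply to cite \cite{WYYK}, as the authors do.
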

   Here for $x \in \R^m$, we denote by $\{x\}$ the vector in $\R^m$ whose $i$-th coordinate is the fractional part of $i$-th coordinate of $x$.

\begin{thm}[{\cite[Thm.~2]{WYYK}}]
\label{thm2WYYK}
    Let $\delta> 0$ be arbitrary. Let $\{A_q\}_{q \in \Z^n}$ be a sequence of measurable subsets of $[0,1)^m$, satisfying Property P. Define {$d: \Z^n \setminus \{0\} \rightarrow \N$} by
    $$
    d(q)= \sum_{\substack{s|q_i \\ 1 \leq i \leq n}} 1.
    $$
 Suppose $n>1$. Then for almost every $\theta \in \M_{m \times n}(\R)$, we have
    {\begin{equation*}
        Num(T, \theta, (A_q))= \sum_{\|q\|_{\el} \leq T} m_{\R^m}(A_q) + O\left(\left( \sum_{\|q\|_\infty \leq T} m_{\R^m}(A_q) d(q)  \right)^{1/2} \left(\log \left( \sum_{\|q\|_\infty \leq T} m_{\R^n}(A_q) d(q)    \right)  \right)^{\tfrac{3}{2} + \delta}\right)
    \end{equation*}
   for all $T \in \N$, where $Num(T, \theta, (A_q))$ denote the number of {integral vectors} satisfying $ \|q\|_{\el} \leq T$ and $\{\theta q\} \in A_q$.}
\end{thm}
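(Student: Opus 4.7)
The plan is to bound $D^\sharp(T,\theta)$ by first counting all integer $\e$-approximates (dropping the primitivity constraint $\gcd(p,q)=1$) via the Wang--Yu metric theorems \ref{thmWYYK}/\ref{thm2WYYK}, and then extracting the primitive count by Möbius inversion, which naturally produces the $1/\zeta(d)$ factor. For each $q \in \Z^n$, set $c(q) := \prod_{j=1}^r \max\{1, \|\rho_j'(q)\|^{n_j}\}$ and encode the $\e$-condition \eqref{eq: def modified e approx} as $p + \theta q \in \Omega_q$, where
\[
\Omega_q = \bigl\{y \in \R^m : \|\rho_i(y)\| \leq \eta_i \text{ for all } i,\ \textstyle\prod_i \|\rho_i(y)\|^{m_i} \leq \e/c(q)\bigr\}.
\]
Since $\Omega_q$ is centrally symmetric, I decompose it into $2^k$ orthant pieces (indexed by the sign of each $\rho_i(y) \in \R^{m_i}$) and reflect each piece into $[0,1)^m$, obtaining sets $A_q^\sigma \subset [0,1)^m$ satisfying Gallagher's property P \eqref{Property P}. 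With $q \in \Z^n$ enumerated so that $m(A_q^\sigma)$ is monotonically decreasing, Theorem \ref{thm2WYYK} (or \ref{thmWYYK} if $n=1$) applies to each of the $2^k$ sequences, yielding for a.e.\ $\theta$
\[
N(T, \theta) := \#\{(p,q) : \|q\| \leq e^T,\ p + \theta q \in \Omega_q\} = \sum_{\|q\| \leq e^T} m_{\R^m}(\Omega_q) + o(T^{k+r-1}),
\]
the Wang--Yu error being $o(T^{k+r-1})$ because it carries a square root of the main term.

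Next I compute $\sum_{\|q\| \leq e^T} m_{\R^m}(\Omega_q)$ asymptotically. Polar-type substitutions $u_i = \|\rho_i(y)\|^{m_i}$ and $v_j = \|\rho_j'(q)\|^{n_j}$ reduce the problem to an integral in $(u,v)$-coordinates; the main contribution comes from $q$ with all $\|\rho_j'(q)\| \geq 1$ (contributions from $q$ with some $\rho_j'(q) = 0$ form a lower-dimensional stratum giving only $O(T^{k+r-2})$). After the further substitution $\log v_j = n_j s_j$ with $s_j \in [0,T]$, the sum reduces to
\[
\prod_{i=1}^k m_{\R^{m_i}}(B^{m_i}_1) \prod_{j=1}^r m_{\R^{n_j}}(B^{n_j}_1) \cdot \e \prod_{j=1}^r n_j \int_{[0,T]^r} \frac{\bigl(\sum_j n_j s_j\bigr)^{k-1}}{(k-1)!}\, ds + o(T^{k+r-1}).
\]
A direct iterated integration with inclusion--exclusion (the same combinatorial identity underlying \eqref{eq: def c k r 1} and the volume formula \eqref{eq: measure of J T}) evaluates $\int_{[0,T]^r}\bigl(\sum_j n_j s_j\bigr)^{k-1}\, ds = \tfrac{(k-1)!\,T^{k+r-1}\, c_{k+r-1}(n_1,\ldots,n_r)}{(k+r-1)! \prod_j n_j}$. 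Recognising $m_{\R^d}(B^d_1) = \prod_i m_{\R^{m_i}}(B^{m_i}_1)\prod_j m_{\R^{n_j}}(B^{n_j}_1)$ under the product norm on $\R^d$, this gives $N(T, \theta) \sim \tfrac{\e\, m_{\R^d}(B^d_1)\, T^{k+r-1}}{(k+r-1)!}\, c_{k+r-1}(n_1,\ldots,n_r)$.

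To pass from $N$ to $D^\sharp$, I apply Möbius inversion via $\mathbf{1}[\gcd(p,q)=1] = \sum_{s \mid \gcd(p,q)} \mu(s)$. Writing $(p,q) = s(p',q')$ and using that $\|\rho_j'(q')\| \geq 1$ whenever $\rho_j'(q') \neq 0$, the $\e$-condition on $(p,q)$ is equivalent (to leading order) to the $(\e/s^d)$-condition on $(p',q')$ with $\|q'\| \leq e^T/s$. Applying the previous count at each $s$-level, $N_s(T,\theta) = s^{-d}\cdot(\text{main term of } N(T,\theta)) + o(T^{k+r-1}/s^d)$, which is summable in $s$ by the rapid decay of $s^{-d}$. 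Since $\sum_s \mu(s)/s^d = 1/\zeta(d)$, we obtain
\[
D^\sharp(T,\theta) \leq \frac{\e}{\zeta(d)} \cdot \frac{m_{\R^d}(B^d_1)}{(k+r-1)!}\, c_{k+r-1}(n_1,\ldots,n_r)\, T^{k+r-1} + o(T^{k+r-1}),
\]
which is the stated $\limsup$ upper bound. The main obstacle will be the initial Wang--Yu setup: Gallagher's property P is a restrictive coordinate-wise monotonicity condition, and verifying it for the $2^k$ reflected pieces $A_q^\sigma$ together with finding a single enumeration of $q \in \Z^n$ along which $m(A_q^\sigma)$ is decreasing requires careful bookkeeping. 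A secondary technicality lies in the Möbius step, where one must check that the boundary-stratum contributions (from $q$ with some $\rho_j'(q) = 0$) and the tail contributions (large $s$) both remain $o(T^{k+r-1})$ uniformly.
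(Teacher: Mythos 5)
There is a fundamental mismatch here: the statement you were asked to prove is Theorem \ref{thm2WYYK} itself, i.e.\ the Wang--Yu metric counting theorem (a refinement of the Gallagher--Schmidt theorems), which the paper does not prove at all but simply imports from \cite{WYYK}. Your write-up does not prove this statement --- it \emph{invokes} it. You explicitly say ``Theorem \ref{thm2WYYK} (or \ref{thmWYYK} if $n=1$) applies to each of the $2^k$ sequences,'' and everything downstream (the volume computation of $\Omega_q$, the polar substitutions, the M\"obius inversion producing $1/\zeta(d)$) is the derivation of the counting bounds for $\e$-approximates. In other words, what you have sketched is essentially the paper's own proof of Theorem \ref{thm:N(T, theta)} and Theorem \ref{thm:N^sharp(T,theta)}, which sit \emph{downstream} of Theorem \ref{thm2WYYK}; as an argument for Theorem \ref{thm2WYYK} it is circular.

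A genuine proof of Theorem \ref{thm2WYYK} lives in metric number theory and has a different shape entirely: for the counting function $Num(T,\theta,(A_q))$ one computes the mean $\sum_{\|q\|\leq T} m_{\R^m}(A_q)$ over $\theta \in M_{m\times n}([0,1))$, then estimates the variance by expanding the square and controlling the off-diagonal terms $\int \chi_{A_{q}}(\{p+\theta q\})\chi_{A_{q'}}(\{p'+\theta q'\})\,d\theta$, where the correlations are governed by common divisors of the coordinates of $q$ and $q'$ --- this is exactly where the divisor-type weight $d(q)$ in the error term comes from, and why Property P (coordinate-wise monotonicity) is needed to run the overlap estimates. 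One then concludes with a quantitative almost-everywhere convergence lemma of Erd\H{o}s--G\'al--Koksma type, which converts the variance bound into the stated asymptotic with the $(\log\cdot)^{3/2+\delta}$ factor. None of this machinery appears in your proposal, so the statement in question remains unproved; if your intent was to prove the upper-bound Theorem \ref{thm:N^sharp(T,theta)} assuming the Wang--Yu input, then your outline does track the paper's argument for that result, but that is a different statement from the one assigned.
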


{ \begin{defn}
    Given $\theta = (\theta_{ij}) \in \M_{m \times n}(\R)$, we call a vector $ (p,q ) \in \Z^{m} \times \Z^n$ an $\e^*$-approximation if it satisfies the following conditions:
\begin{align*}
    \|\rho_i(p+ \theta q)\| \leq \eta_i \text{ for all } i=1, \ldots, k,\\
    \left( \prod_{i=1}^k \|\rho_i(p+\theta q)\|^{m_i} \right). \left(\prod_{j=1}^r  \|\rho_j'(q)\|^{n_j} \right) \leq \e, 
\end{align*}
and $\|\rho_j'(q)\| \neq 0$ for all $j=1, \ldots, r$.
\end{defn}}

{We will use the Theorems \ref{thmWYYK} and \ref{thm2WYYK} in the form of the following proposition to prove Theorem \ref{thm:N^sharp(T,theta)}.}

\begin{prop}
\label{thm:N(T, theta)}
     For $T>1$, $s \in \N$ and $\theta \in \M_{m \times n}(\R)$, define $D(T,\theta,s)$ as number of $\e^*$-approximates $(p,q) \in \Z^m \times \Z^n$ satisfying $\|q\| \leq e^T$ and $s$ divides $\gcd(p,q)$. Then for almost every $\theta \in \M_{m \times n}(\R)$, we have
    \begin{align}
        \label{eq:N(T,theta)}
        \lim_{T \rightarrow \infty} \frac{1}{T^{k+r-1}} D(T, \theta,s) = \frac{\e}{s^d} \frac{1}{(k+r-1)!} m_{\R^{d}}(B^{d}_1)   c_{k+r-1}(n_1, \ldots, n_r),
    \end{align}
   where $c_{k+r-1}(n_1, \ldots, n_r)$ is defined as in \eqref{eq: def c k r 1}. 
\end{prop}
\begin{proof}
{\bf Case I} For all $i$, $\eta_i>0$ is small enough so that $\|x\|< \eta_i$ implies that $\|x\|_{\el} < 1$ for all $x \in \R^{m_i}$.

Define the sets $(A_q)_{q \in \N}$ as 
    \begin{align}
    \label{eq: def Aq}
        A_q= \left\{x \in (B_{\eta_1}^{m_1} \times \cdots \times B_{\eta_k}^{m_k})\cap (\R_{\geq 0})^m  : \left( \prod_{i=1}^k \|\rho_i(x)\|^{m_i} \right). \left(\prod_{j=1}^r \|\rho_j'(q)\|^{n_j} \right) \leq \e \right\} \subset [0,1)^m
    \end{align}
    if $\rho_j'(q) \neq 0$ for all $1 \leq j \leq r$ and set $A_q= \emptyset$ otherwise.

    Since any two norms on $\R^n$ are equivalent, we have constants $\alpha, \beta$ such that
    $$
    \alpha \|x\| \leq \|x\|_\el \leq \beta \|x\| \text{ for all } x \in \R^n.
    $$
    Note that for {Lebesgue almost every} $\theta \in \M_{m \times n}(\R)$, we have
    \begin{align}
        \label{eq: ineq count}
         \sum_{B} Num \left( \left\lfloor \frac{\alpha e^T}{s}  \right\rfloor, B \theta, \frac{1}{s}A_{sq} \right)  \leq D(T, \theta, s)  \leq \sum_{B} Num\left( \left\lfloor \frac{\beta e^T}{s}  \right\rfloor, B \theta, \frac{1}{s}A_{sq} \right),
    \end{align}
    where sum is taken over all diagonal matrices $B$ with entries $\pm 1$ and 
    $$
    \frac{1}{s}A_{sq} =  \left\{ \frac{x}{s}: x\in A_{sq} \right\}.
    $$
    {The inequality \eqref{eq: ineq count} may fail only for those \(\theta\) admitting some \(q\in\Z^n\) with an entry of \(\{\theta q\}\) equal to zero; such \(\theta\) form a Lebesgue-null set.}
    
    We will now use {Theorems} \ref{thmWYYK} and \ref{thm2WYYK} to compute $Num(\lfloor \gamma e^T/s \rfloor,B \theta, \tfrac{1}{s}A_{sq}  )$ for $\gamma = \alpha, \beta$. But first, let us evaluate $m_{\R^m}(\frac{1}{s} A_{sq})$.

    \medskip
    
    Note that $m_{\R^m}(\frac{1}{s} A_{sq}) =0$ if $\rho_j'(q) = 0$ for some $1 \leq j \leq r$, otherwise we have
    \begin{align}
    \label{eq; measure of Aq}
           m_{\R^m}\left(\frac{1}{s}A_{sq} \right)= \frac{\e}{s^d} \frac{1}{2^m}m_{\R^m}(B^m_1) \sum_{i=0}^{k-1}   \frac{1}{i!} \frac{(n\log(s) +\log( \|\rho_1'(q)\|^{n_1} \cdots \|\rho_r'(q)\|^{n_r}) -\log( \e  \eta_1^{-m_1} \cdots \eta_k^{-m_k} ))^i}{  \|\rho_1'(q)\|^{n_1} \cdots \|\rho_r'(q)\|^{n_r} },
    \end{align}
    for all large $q$ and for {finitely many small values} of $q$, we may have 
    \begin{align}
        \label{eq:measure of Aq 2}
        m_{\R^m}\left(\frac{1}{s}A_{sq} \right)= \frac{1}{(2s)^m}m_{\R^m}(B^m_1) \eta_1^{m_1} \cdots \eta_k^{m_k}.
    \end{align}

{ \noindent To see this, note that if \( q \) is small, then it is possible that
\[
\|\rho_i(x)\| \leq \eta_i \text{ for all } i
\quad \implies \quad
\left( \prod_{i=1}^k \|\rho_i(x)\|^{m_i} \right) \cdot
\left( \prod_{j=1}^r \|\rho_j'(sq)\|^{n_j} \right)
\leq \e,
\]
in which case \( A_{sq} = (B_{\eta_1}^{m_1} \times \cdots \times B_{\eta_k}^{m_k}) \cap (\R_{\geq 0})^m \), and hence \eqref{eq:measure of Aq 2} follows.

Otherwise, we push forward the set \( A_{sq} \) under the map
\begin{align}
\label{eq: measure of Aq 3}
(\R_{\geq 0})^m &\longrightarrow \R^k \times \Sphere^{m_1} \times \cdots \times \Sphere^{m_k}, \\
x &\longmapsto \left( \left( \|\rho_1(x)\|^{m_1}, \ldots, \|\rho_k(x)\|^{m_k} \right), \left( \frac{\rho_1(x)}{\|\rho_1(x)\|}, \ldots, \frac{\rho_k(x)}{\|\rho_k(x)\|} \right) \right), \nonumber
\end{align}
which yields the following expression for the measure:
\begin{align*}
&m_{\R^k}\left( \left\{ (x_1, \ldots, x_k) :
\begin{array}{l}
x_i \in [0, \eta_i] \text{ for all } i, \\
x_1 \cdots x_k \leq \dfrac{\e}{s^n \prod_{j=1}^r \|\rho_j'(q)\|^{n_j}}
\end{array}
\right\} \right)
 \times \left( \frac{1}{2^{m_1}} \mu^{(\Sphere^{m_1})}(\Sphere^{m_1}) \right) \cdots \left( \frac{1}{2^{m_k}} \mu^{(\Sphere^{m_k})}(\Sphere^{m_k}) \right),
\end{align*}
which directly implies \eqref{eq; measure of Aq} after explicitly computing the first term and noting that
\[
m_{\R^m}(B^m_1) = \mu^{(\Sphere^{m_1})}(\Sphere^{m_1}) \cdots \mu^{(\Sphere^{m_k})}(\Sphere^{m_k}).
\]}

\medskip

    Thus for $\gamma= \alpha, \beta$ we have for all large enough $T$: {
    \begin{align}
       & {\sum_{ \|q\|_\el \leq \lfloor \gamma e^T/s \rfloor }} m_{\R^m}\left(\frac{1}{s}A_{sq} \right)  \nonumber\\
       &= \frac{\e}{s^d} \frac{1}{2^m}m_{\R^m}(B^m_1)   \sum_{ \substack{1 \leq  \|\rho_i'(q)\|_\el \leq \lfloor \gamma e^T/s \rfloor \\ \text{for all } 1\leq i \leq r} } \frac{1}{(k-1)!}  \frac{(\log( \|\rho_1'(q)\|^{n_1} \cdots \|\rho_r'(q)\|^{n_r}) )^{k-1}}{  \|\rho_1'(q)\|^{n_1} \cdots \|\rho_r'(q)\|^{n_r} } \nonumber\\
       &\quad + O\left( \sum_{ \substack{1 \leq  \|\rho_i'(q)\|_\el \leq \lfloor \gamma e^T/s \rfloor \\ \text{for all } 1\leq i \leq r} } \frac{(\log( \|\rho_1'(q)\|^{n_1} \cdots \|\rho_r'(q)\|^{n_r}) )^{k-2}}{  \|\rho_1'(q)\|^{n_1} \cdots \|\rho_r'(q)\|^{n_r} } \right), \label{eq:temp4}
       \end{align}
      where implicit constant depends on $\e,d,n, \eta_1, \ldots, \eta_k$.}
    Note that
    \begin{align}
        &\frac{1}{(k-1)!}\sum_{ \substack{1 \leq  \|\rho_i'(q)\|_\el \leq \lfloor \gamma e^T/s \rfloor \\ \text{for all } 1\leq i \leq r} }  \frac{(\log( \|\rho_1'(q)\|^{n_1} \cdots \|\rho_r'(q)\|^{n_r}) )^{k-1}}{  \|\rho_1'(q)\|^{n_1} \cdots \|\rho_r'(q)\|^{n_r} } \nonumber \\
        &=  \sum_{\substack{j_1, \ldots, j_{r} \\ j_1 + \cdots + j_{r}=k-1}}  \frac{1}{j_1! \cdots j_r!}  \prod_{i=1}^r \left( \sum_{1 \leq  \|\rho_i'(q)\|_\el \leq \lfloor \gamma e^T/s \rfloor} \frac{(\log \|\rho_i'(q)\|^{n_i})^{j_i}}{\|\rho_i'(q)\|^{n_i}} \right). \label{eq:temp5}
    \end{align}
    Also, note that for all $1 \leq i \leq r$ and $j \in \N$, we have
    {\begin{align}
        \sum_{1 \leq \|\rho_i'(q)\|_\el \leq \lfloor \gamma e^T/s \rfloor}  \frac{(\log \|\rho_i'(q)\|^{n_r})^{j}}{\|\rho_i'(q)\|^{n_i}}  &=  \int_{\substack{x \in \R^{n_i} \\ 1 \leq  \|x\|_\infty \leq \lfloor \gamma e^T/s \rfloor}} \frac{(\log \|x\|^{n_i})^{j}}{\|x\|^{n_i}} \, dx + O(T^j) \nonumber \\
        &= \mu^{(\Sphere^{n_i})}(\Sphere^{n_i}) \frac{n_i^{j+1}}{j+1}  T^{j+1} + O(T^j). \label{eq:temp6}
    \end{align}
    Therefore, using equations \eqref{eq:temp5}, \eqref{eq:temp6} and the fact that {$m_{\R^n}(B^n_1) = \mu^{(\Sphere^{n_1})}(\Sphere^{n_1}) \cdots \mu^{(\Sphere^{n_r})}(\Sphere^{n_r})$}, we get that for $\gamma= \alpha, \beta$
    \begin{align}
        &\frac{1}{(k-1)!}\sum_{ \substack{1 \leq  \|\rho_i'(q)\|_\el \leq \lfloor \gamma e^T/s \rfloor \\ \text{for all } 1\leq i \leq r} }  \frac{(\log( \|\rho_1'(q)\|^{n_1} \cdots \|\rho_r'(q)\|^{n_r}) )^{k-1}}{  \|\rho_1'(q)\|^{n_1} \cdots \|\rho_r'(q)\|^{n_r} } \nonumber \\
        &= { \sum_{\substack{j_1, \ldots, j_{r} \\ j_1 + \cdots + j_{r}=k-1}}  \frac{1}{j_1! \cdots j_r!}  \prod_{i=1}^r \left( \mu^{(\Sphere^{n_i})}(\Sphere^{n_i}) \frac{n_i^{j_i+1}}{j_i+1}  T^{j_i+1} + O(T^{j_i}) \right) }\nonumber \\
        &= m_{\R^n}(B^n_1) \frac{ T^{k+r-1}}{(k+r-1)!}   \sum_{\substack{j_1, \ldots, j_{r} \\ j_1 + \cdots + j_{r}=k-1}}  \frac{(k+r-1)!}{(j_1+1)! \cdots (j_r+1)!} (n_1^{j_1 +1} \cdots n_r^{j_r+ 1}) +O(T^{k+r-2}) \nonumber \\
        &= m_{\R^n}(B^n_1) \frac{ T^{k+r-1}}{(k+r-1)!}  c_{k+r-1}(n_1, \ldots, n_r).   \label{eq:n temp 1}
    \end{align}
    Similarly, we have
    \begin{align}   \label{eq:n temp 2}
        \sum_{ \substack{1 \leq  \|\rho_i'(q)\|_\el \leq \lfloor \gamma e^T/s \rfloor \\ \text{for all } 1\leq i \leq r} } \frac{(\log( \|\rho_1'(q)\|^{n_1} \cdots \|\rho_r'(q)\|^{n_r}) )^{k-2}}{  \|\rho_1'(q)\|^{n_1} \cdots \|\rho_r'(q)\|^{n_r}} = O(T^{k+r-2}).  
    \end{align}}
    \noindent Using equations \eqref{eq:temp4}, \eqref{eq:n temp 1} and \eqref{eq:n temp 2} and the fact that {$m_{\R^d}(B^d_1) = m_{\R^m}(B^m_1) m_{\R^n}(B^n_1)$}, we get that for $\gamma= \alpha, \beta$
    \begin{align*}
        \lim_{T \rightarrow \infty}\frac{1}{T^{k+r-1}} \sum_{\|q\|_\infty \leq \lfloor \gamma e^T/s \rfloor} m_{\R^m}\left(\frac{1}{s}A_{sq} \right)= \frac{m_{\R^d}(B^d_1)}{2^m} \frac{\e}{s^d}  \frac{1}{(k+r-1)!} c_{k+r-1}(n_1, \ldots, n_r).
    \end{align*}
Now, it is easy to see that \begin{tiny}
    \begin{align*}
        &\lim_{T \rightarrow \infty} \frac{1}{T^{k+r-1}} \left( \sum_{ 1 \leq q \leq \lfloor \gamma e^T/s \rfloor} m_{\R^m}\left(\frac{1}{s}A_{sq} \right)  \right)^{\frac{1}{2}} \left( \sum_{ 1 \leq q \leq \lfloor \gamma e^T/s \rfloor} m_{\R^m}\left(\frac{1}{s}A_{sq} \right)q^{-1}  \right)^{\frac{1}{2}}  (\log ( \sum_{\|q\| \leq \lfloor \gamma e^T/s \rfloor} m_{\R^n}\left(\frac{1}{s}A_{sq} \right)) )^{2 + \delta} = 0 \text{ if } n=1 \\
         &\lim_{T \rightarrow \infty} \frac{1}{T^{k+r-1}} \left( \sum_{\|q\|_\infty \leq \lfloor\gamma e^T/s \rfloor} m_{\R^m}\left(\frac{1}{s}A_{sq} \right) d(q)  \right)^{1/2} (\log ( \sum_{\|q\|_\infty \leq \lfloor \gamma e^T/s \rfloor} m_{\R^n}\left(\frac{1}{s}A_{sq} \right) d(q)))^{3/2 + \delta}= 0 \text{ if } n \neq 1.
    \end{align*}
    \end{tiny}
    The result now follows from Theorem \ref{thmWYYK} and \ref{thm2WYYK}.

{\bf Case II} The constants $\eta_i$ {does not} satisfy the condition of Case I. In this case, choose $\eta_i' >0$ for all $1 \leq i \leq r$ such that {$\|x\|< \eta_i'$} implies that $\|x\|_{\el} < 1$ for all $x \in \R^{m_i}$. Note that the set of all $\e^*$-approximates of $\theta$ corresponding to $(\eta_1, \ldots, \eta_k)$ satisfying the conditions of proposition, which are not $\e^*$-approximates of $\theta$ corresponding to $(\eta_1', \ldots, \eta_k')$ satisfying the conditions of proposition is contained in the set {$\cup_I Z_{I}$} where $I$ vary over all non-empty subsets of $\{1, \ldots, k\}$. { Here the set $Z_{I}$ is defined as set of all $(p,q) \in \Z^m \times \Z^n$ such that $\|q\| \leq e^T$, $s$~divides~$\gcd(p,q)$ and
\begin{align}
    \rho_j'(q) &\neq 0 \text{ for all } j=1, \ldots, r,  \label{eq: nn temp 0} \\
   \|\rho_i(p+ \theta q)\| &\leq \eta_i' \text{ for all } i \in \{1, \ldots, k\} \setminus I, \label{eq: nn temp 1} \\
  \eta_i'< \|\rho_i(p+\theta q)\| &\leq \eta_i  \text{ for all } i \in I, \label{eq: nn temp 2}\\
    \left( \prod_{i \in \{1, \ldots, k\}} \|\rho_i(p+\theta q)\|^{m_i} \right).& \left(\prod_{j=1}^r  \|\rho_j'(q)\|^{n_j} \right) \leq { \e}, \label{eq: nn temp 3}
\end{align}
which implies that
\begin{align}
     \left( \prod_{i \in \{1, \ldots, k\} \setminus I} \|\rho_i(p+\theta q)\|^{m_i} \right). \left(\prod_{j=1}^r  \|\rho_j'(q)\|^{n_j} \right) \leq \frac{\e}{\prod_{i \in I} (\eta_i')^{m_i} }. \label{eq: nn temp 4}
\end{align}

Note that the number of all \( (p,q) \in \Z^m \times \Z^n \) satisfying \eqref{eq: nn temp 0}, \eqref{eq: nn temp 1}, \eqref{eq: nn temp 2}, and \eqref{eq: nn temp 4} is bounded by a constant multiple of the number of 
\[
\left( (\rho_i(p))_{i \notin I}, q \right) \in \Z^{m - \sum_{i \in I} m_i} \times \Z^n
\]
satisfying \eqref{eq: nn temp 0}, \eqref{eq: nn temp 1}, and \eqref{eq: nn temp 4}. This follows because, for each \( i \in I \), the coordinate \( \rho_i(p) \) is determined up to finitely many choices by \eqref{eq: nn temp 2}, for each fixed value of \( q \in \Z^n \). Moreover, this finite number of choices can be uniformly bounded above by a constant depending only on \( \eta_i \), \( \eta_i' \), and the choice of norm on \( \R^{m_i} \).

Thus, by Case~I, we conclude that the cardinality of \( Z_I \) is of order \( O(T^{k + r - 1 - |I|}) \). Hence, the proposition follows.}
\end{proof}

\begin{proof}[Proof of Theorem \ref{thm:N^sharp(T,theta)}]
   
   Using Proposition~\ref{thm:N(T, theta)}, it is easy to see that for Lebesgue almost every $\theta \in \M_{m \times n}(\R)$, the number of $\e$-approximates $(p,q)$ of $\theta$ such that $\|\rho_j'(q)\|=0$ for some $j$ are of order $O(T^{k+r-2})$, since they are $\e^*$-approximate of matrices obtained by removing columns from $\theta$. Hence for the proof, we may assume that $D^\sharp(T, \theta)$ counts the number of $\e$-approximates $(p,q)$ of $\theta$ such that $\|\rho_j'(q)\| \neq 0$ for all $j$.

    Let $s_1, s_2, \ldots$ denote the increasing sequence of all prime numbers. Fix an $l \in \N$. It is clear that $D^\sharp(T, \theta)$ is bounded above by $\e^*$-approximates $(p,q)$ of $\theta$ such that $\|q\| \leq e^T$ and $s_i \nmid  \gcd(p,q)$ for all $i= 1, \ldots ,l$. Thus we have
    \begin{align*}
        D^\sharp(T, \theta) \leq D(T,\theta,1) - \sum_{i=1}^l D(T,\theta,p_i) + \sum_{\substack{ i,j=1 \\ i \neq j}}^l  D(T,\theta,p_ip_j) - \cdots + (-1)^l D(T, \theta, p_1 \cdots p_l).
    \end{align*}
    Using Proposition~\ref{thm:N(T, theta)}, we get that
    {\begin{align*}
        \limsup_{T \rightarrow \infty} \frac{1}{T^{k+r-1}} D^\sharp(T, \theta) \leq \left( \prod_{i=1}^l \left(1 - \frac{1}{p_i^d} \right)\right)\frac{m_{\R^d}(B^d_1)\delta}{(k+r-1)!} c_{k+r-1}(n_1, \ldots, n_r). 
    \end{align*}}
    The theorem now follows by taking the limit as $l \rightarrow \infty$.
\end{proof}


\section{Proof of Theorem \ref{main thm}}
\label{sec: Proof of main Theorem}

\begin{proof}
    Assume that $\theta \in \M_{m \times n}(\R)$ is chosen so that Lemma \ref{lem: Important Commutativity} and \ref{mainmult3} {hold}. Then for any {$\mu_{\seda}$-JM} set $E \subset \seda$, we have
    \begin{align}
    \label{eq: sandwich 1}
       & \liminf_{T \rightarrow \infty} \frac{1}{m_{\R^{k+r-1}}(J^T)} \sum_{t \in N(a_{\eta_0} \tilde \Lambda_\theta, T, \log \e - \log (\eta_1^{m_1}\cdots \eta_k^{m_k}), \seda ) + \eta_0 } \ind_E(a_t \tilde \Lambda_\theta)  \geq \mu_{\seda}(E).
    \end{align}
    Note that by Lemma \ref{Number ineq}, we know that 
    \begin{align*}
        N(\tilde \Lambda_\theta, T, \log \e, \seda ) \cap ((-\log \eta_1, \infty) \times \cdots \times (-\log \eta_k, \infty) \times \R^{r-1}) &\subset \Y_T(\theta).
    \end{align*}
    Thus, we have 
    \begin{align}
    \label{eq: inequality}
         N(a_{\eta_0} \tilde \Lambda_\theta, T, \log \e - \log (\eta_1^{m_1}\cdots \eta_k^{m_k}), \seda ) + \eta_0   &\subset \Y_T(\theta). 
    \end{align} 
    Note that by \eqref{eq: sandwich 1} and \eqref{eq: inequality} we have
    \begin{align*}
        &\mu_{\seda}(E) + \mu_{\seda}(\seda \setminus E) \\
        &\leq \liminf_{T \rightarrow \infty} \frac{1}{m_{\R^{k+r-1}}(J^T)} \sum_{t \in \Y_T(\theta) } \ind_E(a_t \tilde \Lambda_\theta) + \liminf_{T \rightarrow \infty} \frac{1}{m_{\R^{k+r-1}}(J^T)} \sum_{t \in \Y_T(\theta) } \ind_{\seda \setminus E}(a_t \tilde \Lambda_\theta) \\
        &\leq \limsup_{T \rightarrow \infty} \frac{1}{m_{\R^{k+r-1}}(J^T)}  \sum_{t \in \Y_T(\theta) } 1 \\
        &\leq \frac{1}{2} \limsup_{T \rightarrow \infty}  \frac{m_1 \cdots m_kn_1 \cdots n_{r-1}. (k+r-1)!}{T^{k+r-1}c_{k+r-1}(n_1, \ldots, n_r)} D^\sharp(T, \theta) \text{  by Lemma \ref{lem: Important Commutativity}} \\
        &\leq \frac{1}{2} \frac{m_{\R^d}(B^d_1) . \e}{\zeta(d)} m_1 \cdots n_{r-1}  \text{ by Theorem \ref{thm:N^sharp(T,theta)}}\\
        &= \mu_{\seda}(\seda) \text{ by Proposition  \ref{prop:pushforward general}}\\
        &= \mu_{\seda}(E) + \mu_{\seda}(\seda \setminus E).
    \end{align*}
    Thus equality holds in each of the above steps, and we get that for any $\mu_{\seda}$-JM set $E \subset \seda$
    \begin{align}
        \label{eq: sandwich 3}
        \lim_{T \rightarrow \infty} \frac{m_1 \cdots  n_{r-1}. (k+r-1)!}{T^{k+r-1}c_{k+r-1}(n_1, \ldots, n_r)} \sum_{t \in \Y_T(\theta) } \ind_E(a_t \tilde \Lambda_\theta)  = \mu_{\seda}(E).
    \end{align}
Since, this holds for all $\mu_{\seda}$-JM set $E \subset \seda$, we have by Theorem \ref{muJM} that
\begin{align}
    \label{eq:a2}
    \frac{m_1 \cdots n_{r-1}. (k+r-1)!}{T^{k+r-1}c_{k+r-1}(n_1, \ldots, n_r)}  \sum_{t \in \Y_T(\theta)} \delta_{a_t \tilde \Lambda_\theta}  \longrightarrow \mu_{\seda},
\end{align}
as $T \rightarrow \infty$ tightly. Since, $\tilde \psi$ and $\tilde \psi^-$ are continuous $\mu_{\seda}$-almost everywhere (on $\sseda$), hence by Theorem \ref{muJM}, we have
\begin{align}
\label{eq:a3}
     \frac{m_1 \cdots  n_{r-1} . (k+r-1)!}{T^{k+r-1}c_{k+r-1}(n_1, \ldots, n_r)}  \sum_{t \in \Y_T(\theta)} \left( (\tilde \psi)_* (\delta_{a_t \tilde \Lambda_\theta}) +  (\tilde \psi^-)_* (\delta_{a_t \tilde \Lambda_\theta})  \right) \longrightarrow (\tilde \psi)_* \mu_{\seda}+   (\tilde \psi^-)_* \mu_{\seda}.
\end{align}
We know that by Lemma \ref{lem: Important Commutativity} that if $t$ corresponds to $\e$-approximates $(p,q)$ and $(-p,-q)$ such that the last coordinate of $q$ is positive, then we have 
\begin{align}
\label{eq:a4}
    (\tilde \psi)_* (\delta_{a_{t} \tilde \Lambda_\theta})&= \delta_{( \lambda_d(\theta, p,q),\proj(\theta,p,q), \disp(\theta,p,q) ,( p,q))},\\
    \label{eq:a4'}
    (\tilde \psi^-)_* (\delta_{(a_{t} \tilde \Lambda_\theta)}) &= \delta_{( \lambda_d(\theta, -p,-q),\proj(\theta,-p,-q), \disp(\theta,-p,-q) , (-p,-q))}.
\end{align}
Also, we have by {Proposition~\ref{prop:pushforward general} that 
\begin{align}
    \nonumber
    &(\tilde \psi)_* \mu_{\seda}+   (\tilde \psi^-)_* \mu_{\seda} \\
    &= {m_1 \cdots n_{r-1}} \frac{1}{\zeta(d)} (m_{\E_d}  \times \mu^{(\Sphere^{m_1})} \times \cdots \times \mu^{(\Sphere^{n_r})} \times m_{\R}|_{(0,\e)} \times m_{\hZp^d}) . \label{eq:a5}
\end{align}}
Combining \eqref{eq:a3}, \eqref{eq:a4}, \eqref{eq:a4'} and \eqref{eq:a5}, we get the desired result.
\end{proof}


\section{Time Visits}
\label{sec: Time visits}
In this section and Section \ref{section: Time visit proof}, we will assume that $k=r=1$, i.e., the flow $(a_t)$ is a one-parameter flow. In this case, given a subset $A \subset \seda$, we define the following two functions on the cross-section $\seda$.
\begin{enumerate}
    \item The {\em return time function} $\tau_A: A \rightarrow \R_{> 0}$ defined as 
    $$
    \tau_A(x)= \min\{t \in \R_{> 0}: a_tx \in A\}.
    $$
    The minimum exists for $\mu_{\seda}|_A$-a.e. $x$, since the set $\{t \in \R: t \in \seda\}$ is discrete for $\mu_{\seda}$-a.e. $x$. Thus, the function is defined $\mu_{\seda}|_A$-almost everywhere.
    \item The {\em first return map} $T_A: A \rightarrow A$ defined as $T_A(x)= a_{\tau_A(x)}x$. Clearly, the function is defined $\mu_{\seda}|_A$-almost everywhere.
\end{enumerate}
Note that it is not yet clear whether the maps $\tau_A$ and $T_A$ are measurable functions.

\begin{lem}
\label{lem: Continuity of tau T}
    For a $\mu_{\seda}$-JM subset of $\seda$ with $\mu_{\seda}(A)>0$, we have that the maps $\tau_A, T_A$ are continuous almost everywhere ({with respect to} measure $\mu_{\seda}|_A$).
\end{lem}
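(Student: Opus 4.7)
The plan is to exhibit a $\mu_{\seda}|_A$-conull subset $A \setminus B$ of $A$ on which $\tau_A$ and $T_A$ are continuous, with $B$ an explicit union of measure-zero ``bad'' sets. Since $k=r=1$, the flow $(a_t)$ is one-parameter and the classical Kakutani cross-section theory applies: the first return map $T_{\seda}$ to $\seda$ itself is defined $\mu_{\seda}$-almost everywhere and preserves $\mu_{\seda}$, with return time function $\tau_{\seda}$.

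First I would show that $\tau_{\seda}$, and hence $T_{\seda}$, is continuous at any $x \in \ueda$ with $T_{\seda}(x) \in \ueda$. Lower semi-continuity is automatic from Theorem~\ref{thm: Adeles}(1): if $0 < s < \tau_{\seda}(x)$ then $a_s x$ has an open neighbourhood in $\XA_d$ disjoint from the closed set $\seda$, forcing $\tau_{\seda}(x_n) > s$ for $n$ large. For upper semi-continuity I would invoke the openness of $(t,y) \mapsto a_t y$ on $(0,1) \times \ueda$ from Theorem~\ref{thm: Adeles}(11): for $x_n \to x$ the points $a_{\tau_{\seda}(x)} x_n$ converge to $T_{\seda}(x) \in \ueda$, and openness produces $s_n \to \tau_{\seda}(x)$ with $a_{s_n} x_n \in \ueda \subset \seda$. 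By induction, the iterate $T_{\seda}^i$ is continuous at $x$ whenever $x, T_{\seda}(x), \ldots, T_{\seda}^i(x)$ all lie in $\ueda$.

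Next I would set
\begin{equation*}
B \;=\; (A \setminus \ueda) \;\cup\; \bigcup_{i \geq 1} \Big(T_{\seda}^{-i}\big(\partial_{\seda} A \cup (\seda \setminus \ueda)\big) \cap A\Big),
\end{equation*}
and check that $\mu_{\seda}(B) = 0$. Indeed $\mu_{\seda}(\partial_{\seda} A) = 0$ by the JM hypothesis on $A$; $\mu_{\seda}(\seda \setminus \ueda) = 0$ because Theorem~\ref{thm: Adeles}(1) and (10), combined with the limit definition \eqref{defmu_Sa}, force the thickening $(\seda \setminus \ueda)^{[0,\tau]}$ to be $\mu_{\XA_d}$-null for $0 < \tau < 1$; and Kakutani invariance of $T_{\seda}$ preserves nullity term by term, so the countable union remains null.

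For $x \in A \setminus B$, let $j = j(x)$ be the smallest positive integer with $T_{\seda}^j(x) \in A$, so that $T_A(x) = T_{\seda}^j(x)$ and $\tau_A(x) = \sum_{i=0}^{j-1} \tau_{\seda}(T_{\seda}^i(x))$. By construction each $T_{\seda}^i(x)$ for $0 \leq i \leq j$ lies in $\ueda$ and avoids $\partial_{\seda} A$, so the first step gives $T_{\seda}^i(x_n) \to T_{\seda}^i(x)$ for any $x_n \to x$ in $A$. For $0 < i < j$ the limit lies in the relatively open set $\seda \setminus \overline{A}$, so $T_{\seda}^i(x_n) \notin A$ eventually; for $i = j$ the limit lies in $\mathrm{int}_{\seda}(A)$, so $T_{\seda}^j(x_n) \in A$ eventually. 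Hence $T_A(x_n) = T_{\seda}^j(x_n) \to T_A(x)$ and correspondingly $\tau_A(x_n) \to \tau_A(x)$. The main obstacle I anticipate is the upper semi-continuity step: one must rule out that a perturbed orbit acquires a strictly earlier first return to $\seda$, and the combination of $x, T_{\seda}(x) \in \ueda$ with the openness property in Theorem~\ref{thm: Adeles}(11) is precisely what prevents this, while Kakutani invariance is what promotes the pointwise conditions to the $\mu_{\seda}|_A$-almost-everywhere statement after iteration.
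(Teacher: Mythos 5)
Your overall architecture is the same as the paper's: prove almost-everywhere continuity of the full-section return time $\tau_{\seda}$, use Ambrose--Kakutani invariance of $T_{\seda}$ to make the bad sets and their $T_{\seda}$-preimages null, and then run the chaining argument over the iterates $T_{\seda}^i(x)$, $0\le i\le j$, exactly as in the paper's proof of Lemma \ref{lem: Continuity of tau T}. Your upper semicontinuity step (via the openness of $(t,y)\mapsto a_ty$ on $(0,1)\times\ueda$) and the final chaining paragraph are fine. The gap is in the sentence ``lower semi-continuity is automatic from Theorem \ref{thm: Adeles}(1)''. Closedness of $\seda$ only shows that for a fixed $s$ (or, with a compactness argument, for $s$ ranging in a compact interval $[\delta,\tau_{\seda}(x)-\eta]$ with $\delta>0$) one has $a_sx_n\notin\seda$ eventually; it says nothing about times $t\in(0,\delta)$, where $a_tx_n$ is close to $x\in\seda$ and closedness gives no information. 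The real issue is ruling out \emph{quick returns}: a priori $x_n\to x$ could satisfy $a_{t_n}x_n\in\seda$ with $t_n\to 0^+$, giving $\liminf\tau_{\seda}(x_n)=0$. This is not a formality: at a point of $\seda\setminus\sseda$ whose lattice has a second primitive vector in $\Le^+$ strictly inside the cylinder, small perturbations do produce returns at arbitrarily small positive times, so lower semicontinuity genuinely fails there. Tellingly, your lower-bound argument never uses the hypothesis $x\in\ueda$, which is exactly what must be used at this point.

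This is precisely where the paper's proof of Lemma \ref{lem: continuity of tau T special case} does extra work: using the flow-box structure coming from Theorem \ref{thm: Adeles}(11) and the parametrization in the proof of Proposition \ref{measure equal general}, it produces $\gamma>0$ and a neighbourhood $O(\tilde\Lambda)$ with $O(\tilde\Lambda)^{(0,\gamma)}\cap\seda=\emptyset$, which yields the uniform lower bound $\tau_{\seda}\ge\gamma$ on the neighbourhood; the paper also restricts to points whose lattice avoids $\bigcup_t a_t(\Le\setminus\Le^\circ)$ to control the lateral boundary $\|\varrho_2(\cdot)\|^{n_r}=\e$. To repair your argument you need an analogous no-quick-return statement at points of $\sseda$ (for instance: if $y_n\to x\in\sseda$ and $a_{t_n}y_n\in\seda$ with $t_n\to 0^+$, the witnessing vectors converge to the unique vector $\pm v$ of $\pi(x)$ in $\Le^+$, forcing two distinct primitive vectors of $\pi(y_n)$ into a small ball around $v$, contradicting $\X_d$-convergence and discreteness via Lemma \ref{SW22 Lem 8.1}). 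With that lemma inserted, the rest of your proposal goes through and is essentially the paper's proof.
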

\begin{rem}
\label{rem:continuity of tau implies T}
    Note that it is enough to prove that $\tau_A$ is continuous almost everywhere ({with respect to} measure $\mu_{\seda}|_A$). To see this, note that $T_A$ equals composition of maps $A \rightarrow \R \times A \rightarrow A$, defined as $x \mapsto (\tau_{A}(x), x ) \mapsto a_{\tau_{A}(x)}x$. Using continuity of the map $\R \times A \rightarrow A$, $(t,x) \mapsto a_tx$, we get that $T_A$ is continuous at any point where $\tau_A$ is continuous. Hence, the claim follows.
\end{rem}

Let us first prove the special case of Lemma \ref{lem: Continuity of tau T} for $A= \seda$.

\begin{lem}
\label{lem: continuity of tau T special case}
    The map $\tau_{\seda}$ is continuous almost everywhere ({with respect to} measure $\mu_{\seda}$).
\end{lem}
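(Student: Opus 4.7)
The strategy is to identify an explicit co-null subset $G \subset \seda$ on which $\tau_{\seda}$ is continuous. I would take
\[
G := \bigl\{\, x \in \ueda : x \in \mathrm{int}_{\seda}((\seda)_{\geq\delta}) \text{ for some } \delta > 0 \text{ and } a_{\tau_{\seda}(x)} x \in \ueda \,\bigr\}.
\]
Co-nullness of $G$ would come from three ingredients: part~(8) of Theorem~\ref{thm: Adeles} (each $(\seda)_{\geq\delta}$ is $\mu_{\seda}$-JM, so equal in measure to its interior, and these interiors exhaust $\seda$ up to a null set as $\delta\downarrow 0$ thanks to the discreteness of return times in part~(2)); parts~(9)--(10), which give that $\ueda$ is a co-null open subset of $\seda$; and the Kakutani--Ambrose cross-section theorem for the one-parameter flow $(a_t)$ (with transverse disintegration compatibility read off from part~(7)), which makes $T_{\seda}$ measure-preserving, so $T_{\seda}^{-1}(\seda \setminus \ueda)$ is $\mu_{\seda}$-null.

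For \emph{lower semi-continuity} at $x \in G$ with $t_0 := \tau_{\seda}(x)$, given a sequence $x_n \to x$ in $\seda$, eventually $x_n \in (\seda)_{\geq\delta}$, so $\tau_{\seda}(x_n) \geq 2\delta > 0$. Along any subsequential limit $\tau_{\seda}(x_{n_k}) \to s$ one has $s \geq 2\delta$, and closedness of $\seda$ in $\XA_d$ (part~(1)) forces $a_s x \in \seda$, whence $s \geq t_0$; thus $\liminf \tau_{\seda}(x_n) \geq t_0$.

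For \emph{upper semi-continuity} at $x \in G$, I would fix an open neighborhood $V$ of $y_0 := a_{t_0} x$ inside $\ueda$ and any $\kappa \in (0, t_0)$. The openness assertion of part~(11) makes $a_{(-\kappa, \kappa)} V$ an open subset of $\XA_d$ containing $y_0$. Since $a_{t_0} x_n \to y_0$, eventually $a_{t_0} x_n = a_{s_n} y_n$ with $y_n \in V \subset \seda$ and $|s_n| < \kappa$, yielding $\tau_{\seda}(x_n) \leq t_0 + \kappa$; together with the previous step, $\tau_{\seda}(x_n) \to t_0$.

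The step I expect to be the main obstacle is verifying the third clause in the definition of $G$, i.e.\ that $a_{\tau_{\seda}(x)} x \in \ueda$ for $\mu_{\seda}$-a.e.\ $x$. This is essentially the measure-preservation of the first return map; in the one-parameter setting it is a direct consequence of the Kakutani--Ambrose theorem, which in our context amounts to re-packaging the ``flow-box'' information already contained in parts~(7), (10), (11) of Theorem~\ref{thm: Adeles}. No new dynamical input is needed, but it is the one step in the argument that is not purely a topological continuity check.
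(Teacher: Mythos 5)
Your proposal is correct in outline, but it follows a genuinely different route from the paper's proof. The paper fixes the explicit co-null set of points $\tilde\Lambda$ with $\pi(\tilde\Lambda)\in\ssed$ whose $\X_d$-orbit never meets $\X_d\bigl(\bigcup_{t\in\R}a_t(\Le\setminus \Le^\circ)\bigr)$ (co-null by Theorem \ref{thm: Adeles} and Lemma \ref{SW22Lem8.8}), and then verifies continuity at each such point by exhibiting explicit open neighbourhoods of the form $O(\tilde\Lambda)\cap\pi^{-1}\bigl(\X_d(\cup_{t\in(l-\delta,l+\delta)}a_{-t}\Le^\circ)\setminus\X_d(\cup_{t\in[\gamma,l-\delta]}a_{-t}\Le)\bigr)$, using Lemmas \ref{SW22 Lem 8.1} and \ref{Sw22 Lem 8.2} plus a flow box at $x$ with $O(\tilde\Lambda)^{(0,\gamma)}\cap\seda=\emptyset$; in particular it never needs the first return map. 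You instead run a semicontinuity argument: the lower bound comes from $\mathrm{int}_{\seda}((\seda)_{\geq\delta})$ together with closedness of $\seda$, and the upper bound from a flow box at the \emph{return point} $a_{\tau_{\seda}(x)}x$, which is why you must know that the first return lands in $\ueda$ almost surely, i.e.\ that $T_{\seda}^{-1}(\seda\setminus\ueda)$ is null. Both semicontinuity steps check out (for the openness of $a_{(-\kappa,\kappa)}V$ note $a_{(-\kappa,\kappa)}V=a_{-\kappa}\bigl(a_{(0,2\kappa)}V\bigr)$, so Theorem \ref{thm: Adeles}(11) does apply), so the only delicate point is exactly the one you flag: invoking Ambrose--Kakutani for measure preservation of $T_{\seda}$ \emph{before} the continuity lemma. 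This is not circular — [\cite{SW22}, Thm.\ 4.4] is purely measure-theoretic (Borel cross-section, discrete return times) and is precisely what the paper itself cites one lemma later (Lemma \ref{lem: special case continuous T}) — but be aware that the paper deliberately postpones it, having just remarked that measurability of $\tau_{\seda},T_{\seda}$ is not yet established, and its proof of the present lemma is arranged so as to avoid any such input. If you prefer to keep that ordering, you can replace your third defining clause of $G$ by the paper's geometric condition (the orbit of $\pi(x)$ avoids $\X_d(\cup_t a_t(\Le\setminus\Le^\circ))$ and meets the section only in $\X_d^{\sharp}(\Le^+)$), which is co-null by Lemma \ref{SW22Lem8.8} and forces \emph{every} visit, in particular the first return, to lie in $\ueda$; with that substitution your argument becomes an essentially self-contained variant of the paper's, trading its explicit $\X_d(W)$-neighbourhoods for a cleaner sequential semicontinuity argument.
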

\begin{proof}
     We will prove that $\tau_{\seda}$ is continuous at all points $\Tilde{\Lambda}$ such that {$\Lambda = \pi(\Tilde{\Lambda})$ does not contain} a point in $\cup_{t \in \R} a_t(\Le \setminus \Le^\circ)$ (defined as in \eqref{eq: def L zero}) and $\Lambda \in \ssed$. Since the latter is a set of full measure in $\seda$ (using Theorem \ref{thm: Adeles}, Lemma \ref{SW22Lem8.8}), the lemma will follow from above statement. To prove this, fix such a $\Tilde{\Lambda}$. Suppose {$\tau_{\seda}(\Tilde{\Lambda}) = l$. Fix $0<\delta <l$ .} 
     
     Note that $x \in \ueda$, hence by Theorem \ref{thm: Adeles} (11) and Proof of Proposition \ref{measure equal general}, there exists a $\gamma> 0$ and an open set $O(\Tilde{\Lambda})$ containing $\Tilde{\Lambda}$ such that the map $[0,\gamma] \times O(\Tilde{\Lambda}) \rightarrow \XA_d$ given by $(t,x) \mapsto a_tx$ is open and injective. Note that since $\seda$ is closed, by shrinking $O(\Tilde{\Lambda})$ and taking smaller $\gamma$, we can assume that $O(\Tilde{\Lambda})^{(0,\gamma)} \cap \seda = \emptyset$. 

     {Consider the set $U= O(\Tilde{\Lambda}) \cap \pi^{-1}(\X_d(\cup_{t \in (l-\delta , l + \delta)} a_{-t}L_\e^\circ ) \setminus  \X_d(\cup_{t \in [\gamma, l-\delta]} a_{-t} \Le )  )$. It clearly contains $\Tilde{\Lambda}$. Since $\cup_{t \in (l-\delta , l + \delta)} L_\e^\circ $ is open, by Lemma \ref{Sw22 Lem 8.2}, we get that $\X_d(\cup_{t \in (l-\delta , l + \delta)} a_{-t}L_\e^\circ )$ is open. Also, since $\cup_{t \in [\gamma, l-\delta]} a_{-t} \Le $ is closed, so by Lemma \ref{SW22 Lem 8.1}, the set $\X_d(\cup_{t \in [\gamma, l-\delta]} a_{-t} \Le )$ is closed. Thus, the set $U$ is open in $\seda$. Note that for any $x \in U$, we have $\tau(x ) \in (l-\delta, l+\delta)$. This follows from the following facts: Firstly, $\tau_{\seda}(x) \geq \gamma$ follows from fact that $x \in O(\Tilde{\Lambda})$ and that $O(\Tilde{\Lambda})^{(0,\gamma)} \cap \seda = \emptyset$. The fact that $\pi(x) \notin \X_d(\cup_{t \in [\gamma, l-\delta]} a_{-t} \Le )$ implies that $\tau_{\seda}(x) > l-\delta$. The fact that $\pi(x) \in \X_d(\cup_{t \in (l-\delta , l + \delta)} a_{-t}L_\e^\circ )$ implies that $\tau_{\seda}(x) < l+\delta$. Thus, corresponding to the given $\delta$, the set $U$ works. Hence, the lemma follows. }
\end{proof}

\begin{lem}
\label{lem: special case continuous T}
    The map $T_{\seda}:\seda \rightarrow \seda$ is continuous almost everywhere ({with respect to} the measure $\mu_{\seda}$) and preserves $\mu_{\seda}$.
\end{lem}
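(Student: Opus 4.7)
The plan is to handle the two claims of the lemma separately. The continuity part is essentially free from what has been established; the measure-preservation part uses the Ambrose--Kakutani special-flow construction, which is available precisely because we are in the one-parameter setting $k=r=1$.

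Continuity of $T_{\seda}$: by Lemma \ref{lem: continuity of tau T special case}, $\tau_{\seda}$ is continuous at $\mu_{\seda}$-a.e.\ point. Remark \ref{rem:continuity of tau implies T} then gives continuity of $T_{\seda}$ at every such point, since $T_{\seda}$ factors as $x \mapsto (\tau_{\seda}(x), x) \mapsto a_{\tau_{\seda}(x)}x$ and the second map is jointly continuous on $\R \times \seda \to \XA_d$.

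For $\mu_{\seda}$-invariance, I would form the suspension space
$$\tilde X := \{(x,t) : x \in \seda,\ 0 \leq t < \tau_{\seda}(x)\},$$
equipped with the product measure $\tilde m := \mu_{\seda}\otimes m_{\R}$, and define $\Phi: \tilde X \to \XA_d$ by $\Phi(x,t) = a_t x$. Using the discreteness of $\tilde T_\e(x)$ from Theorem \ref{thm: Adeles}(2) together with the definition of $\tau_{\seda}$, one checks $\Phi$ is injective; Poincar\'e recurrence applied to a thickening $\seda^{[0,\tau]}$ (which has positive $m_{\XA_d}$-measure by Theorem \ref{thm: Adeles}(7)) shows $\Phi(\tilde X)$ is $m_{\XA_d}$-conull.

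The crucial intermediate claim is $\Phi_*\tilde m = m_{\XA_d}$. Using the decomposition $\seda = \bigsqcup_l E(l)$ from Section \ref{sec: The Cross-section}, on each $E(l)$ the return time $\tau_{\seda}$ is bounded below (by $1/2^l$ for $l>1$ and by $1/2$ for $l=1$), so the vertical interval $[0, \tau_{\seda}(x))$ can be sliced into finitely many pieces of length at most $1/2^{l+1}$. Theorem \ref{thm: Adeles}(7) applies to each slice $E\times J$ with $E\subset E(l)$ Borel and $J$ a small enough interval (combined with left-invariance of $m_{\XA_d}$ under $a_s$ to translate $J$ into $[0,1/2^{l+1}]$); countable additivity extends the identity to all Borel subsets of $\tilde X$. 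With this established, the flow $\{a_s\}$ on $\XA_d$ pulls back through $\Phi$ to the standard special flow $\tilde\psi_s$ on $\tilde X$, namely translation in the $t$-coordinate with the identification $(x,\tau_{\seda}(x)) \sim (T_{\seda}(x),0)$, and $\tilde\psi_s$ preserves $\tilde m$.

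To extract $T_{\seda}$-invariance of $\mu_{\seda}$, fix $\delta>0$ and a Borel $E \subset (\seda)_{\geq \delta}$, set $\tau_0 = \delta/2$, and consider $V := E \times [0, \tau_0) \subset \tilde X$. A case analysis on the number of crossings of the identification under $\tilde\psi_{\tau_0}$ shows
$$\tilde\psi_{-\tau_0}(V) = \{(y,s) : y \in T_{\seda}^{-1}(E),\ s \in [\tau_{\seda}(y)-\tau_0,\ \tau_{\seda}(y))\}.$$
The key input is that $E \subset (\seda)_{\geq \delta}$ forces $\tau_{\seda}(y) \geq 2\delta > \tau_0$ for $\mu_{\seda}$-a.e.\ $y \in T_{\seda}^{-1}(E)$ --- since $T_{\seda}(y) \in E \subset (\seda)_{\geq \delta}$ and $y = a_{-\tau_{\seda}(y)} T_{\seda}(y)$, the definition of $(\seda)_{\geq \delta}$ applied to $T_{\seda}(y)$ forces $\tau_{\seda}(y) \geq 2\delta$. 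This simultaneously rules out multiple crossings and guarantees that the interval $[\tau_{\seda}(y) - \tau_0, \tau_{\seda}(y))$ sits inside $[0, \tau_{\seda}(y))$. Fubini then gives $\tilde m(\tilde\psi_{-\tau_0}(V)) = \tau_0 \mu_{\seda}(T_{\seda}^{-1}(E))$, while $\tilde m(V) = \tau_0 \mu_{\seda}(E)$. Invariance of $\tilde m$ under $\tilde\psi_{-\tau_0}$ yields $\mu_{\seda}(T_{\seda}^{-1}(E)) = \mu_{\seda}(E)$. Arbitrary Borel $E \subset \seda$ is handled by decomposing $E = \bigsqcup_l E \cap E(l)$ and summing.

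The main obstacle I anticipate is the verification of $\Phi_*\tilde m = m_{\XA_d}$, since Theorem \ref{thm: Adeles}(7) directly handles only products $E \times I$ with $E \subset (\seda)_{\geq \delta}$ and $I \subset [0,\delta]$; the slicing-and-summing above is a somewhat delicate book-keeping exercise. The final case analysis for $\tilde\psi_{-\tau_0}(V)$, while elementary, is where the one-parameter hypothesis $k=r=1$ enters decisively, since it allows a meaningful return-time function and hence a genuine special-flow representation.
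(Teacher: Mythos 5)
Your argument is correct, and the two halves compare differently with the paper. The continuity half is exactly the paper's: Lemma \ref{lem: continuity of tau T special case} combined with Remark \ref{rem:continuity of tau implies T}. For invariance, the paper does essentially no work: it cites the Ambrose--Kakutani structure theory of cross-sections for one-parameter flows as summarised in \cite{SW22}, Thm.\ 4.4, and reads off invariance of $\mu_{\seda}$ under $T_{\seda}$ from that statement together with the definition \eqref{defmu_Sa}. You instead reprove the needed piece of that theory by hand: build the suspension $(\tilde X,\tilde m)$, verify $\Phi_*\tilde m=m_{\XA_d}$ from Theorem \ref{thm: Adeles}(7) via the decomposition of $\seda$ into $(\seda)_{\geq 1/2^{l+1}}\cap(\seda)_{<1/2^{l}}$ (the adelic analogue of the sets $E(l)$, via Theorem \ref{thm: Adeles}(3)), transport $a_s$-invariance of $m_{\XA_d}$ to $\tilde\psi_s$-invariance of $\tilde m$ through the a.e.\ bijection $\Phi$, and extract $\mu_{\seda}(T_{\seda}^{-1}E)=\mu_{\seda}(E)$ first for $E\subset(\seda)_{\geq\delta}$ and then in general by summing over the decomposition. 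Your key observation --- that $T_{\seda}(y)\in(\seda)_{\geq\delta}$ together with $y=a_{-\tau_{\seda}(y)}T_{\seda}(y)$ forces $\tau_{\seda}(y)\geq 2\delta$ --- is exactly what rules out multiple crossings and boundary effects, and the normalization of $\mu_{\seda}$ by the limit in \eqref{defmu_Sa} is what makes the Kac-type identity come out right. This buys self-containedness at the price of the flow-box book-keeping you flag. Two small repairs there: on each $E(l)$ the roof $\tau_{\seda}$ is bounded below but not above, so the vertical slicing produces countably many levels and, within each level, you must further decompose by the value of $\min\{\tau_{\seda}(x),(j+1)2^{-(l+1)}\}$ before Theorem \ref{thm: Adeles}(7) applies (a generating-family argument on each flow box, using injectivity of $(t,x)\mapsto a_tx$ on $(-\delta,\delta)\times(\seda)_{\geq\delta}$, then gives $\Phi_*\tilde m=m_{\XA_d}$ there); and conullity of $\Phi(\tilde X)$ does not follow from Poincar\'e recurrence alone --- you need ergodicity of $(a_t)$ on $\XA_d$ (available via \cite{SW22}, Lem.\ 7.6, already invoked in Corollary \ref{cor: Adele gen iff gen}) --- although conullity is not actually needed, since the saturation of $\seda$ is flow-invariant and your pullback argument works verbatim with $m_{\XA_d}$ restricted to it.
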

\begin{proof}
    By {Remark} \ref{rem:continuity of tau implies T} and Lemma \ref{lem: continuity of tau T special case}, we get that the map $T_{\seda}:\seda \rightarrow \seda$ is continuous almost everywhere ({with respect to} the measure $\mu_{\seda}$). The fact that $T_{\seda}$ preserves $\mu_{\seda}$ follows from the structure theory of cross-sections for a single parameter flow built by Ambrose and Kakutani (\cite{Ambrose}, \cite{Ambrose_Kakutani}). This theory is beautifully summarised in \cite[Thm.~4.4]{SW22}. In particular, this can be easily observed from the definition of $\mu_{\seda}$ and the third part of \cite[Thm.~4.4]{SW22}.  
\end{proof}

\begin{proof}[Proof of Lemma \ref{lem: Continuity of tau T}]
    Let $Y$ denote the co-null subset of $\seda$ such that both maps $\tau_{\seda}, T_{\seda}$ are continuous at all points of $Y$. Fix a {$\mu_{\seda}$}-JM subset $A$ of $\seda$ such that $\mu_{\seda}(A)>0$. { We claim that the map $\tau_A$ is continuous for $x \in A$ contained in the set
    \begin{align}
        \label{eq: nn temp 11}
       \left(\bigcap_{i \in \Z_{\geq 0}}T^{-i}_{\seda}(Y) \right) \setminus \left(\bigcup_{j \in \Z_{\geq 0}}T^{-j}_{\seda}(\partial_{\seda} A) \right).
    \end{align}
    Note that the set in \eqref{eq: nn temp 11} is a co-null subset of $\seda$, hence the lemma follows from the above claim and Remark \ref{rem:continuity of tau implies T}.

   To prove the claim, fix $x \in A$ contained in the set \eqref{eq: nn temp 11}. Suppose $\tau_A(x)=l$. Assume that $j \in \N$ is such that $\tau_A(x)= \tau_{\seda}(x)+ \cdots + \tau_{\seda}(T_{\seda}^{j-1}(x))$.} Let $\delta>0$ be given. Since all the points $x, T_{\seda}(x), \ldots, T_{\seda}^{j-1}(x)$ belong to $Y$, there are neighbourhoods $U_i$ of $T_{\seda}^i(x)$ for $0 \leq i \leq j-1$ such that $|\tau_{\seda}(y)-\tau_{\seda}(T_{\seda}^i(x)) | < \delta/j$ for all $y \in U_i$. Note that for all {$0 < i \leq j-1$}, we have $T_{\seda}^i(x) \notin A \cup \partial_{\seda}(A)= \cl_{\seda}(A)$. By shrinking $U_i$, we may assume that $U_i \cap \cl_{\seda}(A)= \emptyset$. By continuity of $T^i_{\seda}$ at point $x$, we can find an open set $U$ such that $T_{\seda}^i(U) \subset U_i$ for all $0 \leq i \leq j-1$ and $T^j_{\seda}(U) \subset A^\circ$. We claim that for all $y \in U$, we have $\tau_A(y) \in (l-\delta, l+\delta)$. To see this note that for all $1 \leq i \leq j-1 $, we have $T_{\seda}^i(y) \in U_i \subset \seda \setminus A$ and $T_{\seda}^j(y) \in A$. Thus, $\tau_A(y)= \tau_{\seda}(y)+ \cdots + \tau_{\seda}(T_{\seda}^{j-1}y)$. Now, the claim follows by the definition of $U_i$. Thus, corresponding to each given $\delta>0$, there exists an open set $U$ such that for all $y \in U$, $|\tau_A(y)- \tau_A(x)| < \delta$. Hence, the lemma follows.
\end{proof}

\section{Proof Of Theorem \ref{thm: cor to Main thm time visits}}
\label{section: Time visit proof}

\begin{proof}
        Fix $\theta \in \M_{m \times n}(\R)$ such that
    \begin{enumerate}
        \item {There are only finitely many $\e$-approximates $(p,q)$ of $\theta$ with $j$-th component of $(p+\theta q,q)$ zero, for $j \in \{1, \ldots, d\}$.}
        \item The following equation holds. 
        \begin{align}
            \label{eq: temp9}
            \lim_{T \rightarrow \infty} \frac{1}{T} \sum_{t \in N^*(\Tilde{\Lambda}_\theta, T, \seda)} \delta_{a_t\Tilde{\Lambda}_\theta} = \mu_{\seda},
        \end{align}
       where $$N^*(x,T,A)= \{t \in [0,T]: a_t\Tilde{\Lambda}_\theta \in A\}$$ for any Borel subset $A$ of $\seda$.
       \item There are no integer vectors $(p,q), (p',q') \in \Z^m \times \Z^n$ with $(p',q') \neq \pm (p,q)$ such that $\|p+ \theta q\|= \|p'+ \theta q'\|$.
    \end{enumerate}
    Note that all the conditions hold for a.e. $\theta \in \M_{m \times n}(\R)$. To see this, note that if $1\leq j \leq m$, then condition (1) holds for all $\theta$ such that $\theta_{11}, \theta_{12}, \ldots, \theta_{mn},1$ are linearly independent over $\Q$, which holds for a.e. $\theta$. If $ m+1 \leq j \leq d$, define $i= j-m$. In this case, the existence of infinitely many $\e$-approximates $(p,q)$ of $\theta$ with $q_i =0$ corresponds to the fact that the $m \times (n-1)$-matrix $\theta^i$ obtained by removing the $i$-th column of $\theta$, has infinitely many integral solutions $(p,q) \in \Z^m \times \Z^{n-1}$ to the inequality $\|p+ \theta^i q\|^m \|q\|^n < \e$. By Khintchine's theorem, it is easy to see that later belongs to a null set. Thus, condition (1) holds for a.e. $\theta$. Also, it is easy to see from the proof of Theorem \ref{main thm} that condition (2) holds for a.e. $\theta$. The observation that condition (3) holds for a.e. $\theta$ is easy to see. We will show that the Theorem holds for this $\theta$. This will prove the Theorem.

    Fix $1 \leq j \leq d$ and a $\tmu^{j}$-JM subset $A$ of $\ZZ_j$ satisfying \eqref{eq: con cor time 1} and \eqref{eq: con cor time 2}. Let $(p_l, q_l) \in \Z^m \times \Z^n$ be the sequence of $\e$-approximations of $\theta$, ordered according to decreasing $\|p_l+ \theta q_l\|$ satisfying 
     \begin{align}
         (\lambda_j(\theta, p_l, q_l), \proj(\theta,p_l, q_l), \disp(\theta,p_l, q_l) , (p_l, q_l)) \in A.
     \end{align}
     Let us define the sequence $t_l \in \R$ as $t_l= -\log \|p_l +\theta q_l\|$ (as in Lemma \ref{Number ineq}). Let $\Y_T^A(\theta)= \{t_l : t_l \leq T\}$. Let $\Tilde{A} = \Tilde{\psi}^{-1}( \phi_{jd}^{-1}( A))$, where $\phi_{jd}$ is defined as in Remark~\ref{rem:reduction of main thm}. Using conditions (2), (3) on $\theta$ and steps in proof of Lemma \ref{Number ineq}, it is easy to see that for all $T$, we have $N^*(\Tilde{\Lambda}_\theta, T, \tilde{A}) \subset \Y_T^A(\theta)$ and the number of such elements in $\Y_T^A(\theta) \setminus N^*(\Tilde{\Lambda}_\theta, T, \tilde{A})$ is bounded by a constant {depending on $\theta$ but independent of $T$}. In fact, the latter exactly corresponds to the number of $\e$-approximates $(p,q)$ of $\theta$ such that either $j$-th or $d$-th component of $(p+\theta q,q)$ is zero. Thus, for all large enough $l$, we have 
     \begin{align}
         \label{eq:temp11}
         \tau_A(T_A^s(a_{t_l}\Tilde{\Lambda}_\theta))= -\log\|p_{l+s+1} + \theta q_{l+s+1}\| + \log\|p_{l+s} + \theta q_{l+s}\| .
     \end{align}
     Let us denote the natural projection maps from $\E_d \times \Sphere^{m} \times \Sphere^{n} \times (0,\e) \times \hat{\Z}_\prim^d$ onto $\Sphere^m$ (resp. $(0,\e)$) by $\pi_1$ (resp. $\pi_2$). Let us define $f: \Tilde{A} \rightarrow \R^m$ as 
     $$f(x)= \pi_1(\Tilde{\psi}(x)). \left(\pi_2(\Tilde{\psi}(\tau_A^s(x))) . e^{\tau_A(x) + \cdots + \tau_A^{s-1}(x)} \right)^{1/m}. $$ 
     The function is clearly continuous $\mu_{\sed}|_{\Tilde{A}}$-almost everywhere and satisfy property that for all large enough $l$, we have
    \begin{align}
        \label{temp 13}
        f(a_{t_l}\Tilde{\Lambda}_\theta) = \|q_{l+s}\|^{n/m}(p_l + \theta q_l).
    \end{align}
     
     Note that by \eqref{eq: temp9}, we have 
     \begin{align}
            \label{eq: temp10}
            \lim_{T \rightarrow \infty} \frac{1}{T} \sum_{t \in N^*(\Tilde{\Lambda}_\theta, T, \Tilde{A})} \delta_{a_t\Tilde{\Lambda}_\theta} = \mu_{\seda}|_{\Tilde{A}}.
        \end{align}
    Using Theorem \ref{muJM}, $\mu_{\seda}$-almost everywhere continuity of $\tilde \psi$ and Lemma \ref{lem: Continuity of tau T}, we pushforward both sides of \eqref{eq: temp10} under map $f$ to get
     \begin{align}
            \label{eq: temp12}
            \lim_{T \rightarrow \infty} \frac{1}{T} \sum_{t \in N^*(\Tilde{\Lambda}_\theta, T, \Tilde{A})} f_*\delta_{a_t\Tilde{\Lambda}_\theta} = f_*\mu_{\seda}|_{\Tilde{A}}.
        \end{align}
        Now applying Lemma \ref{lem: Important Commutativity}, \eqref{temp 13} and the fact that $\Y_T^A(\theta)=  N^*(\Tilde{\Lambda}_\theta, T, \Tilde{A})$ up to finitely many elements, to evaluate the LHS of \eqref{eq: temp12}, we get that \eqref{eq: cor to Main thm time visits} holds for $\theta$, with $\nu_{A,s}$ being the probability measure obtained by normalising $ f_*\mu_{\seda}|_{\Tilde{A}}$. Thus, the theorem holds.
\end{proof}

\appendix

\section{Convergence of Measures on lcsc spaces}
\label{Tight Convergence}
In this section, we briefly and rapidly collect some definitions and facts from measure theory that have been used in this paper. Our presentation here is heavily influenced by Shapira and Weiss \cite{SW22}, and in particular, we follow their notation. Let $X$ be a locally compact, second countable, Hausdorff topological space. Let $\mathcal{B}_X$ denote the Borel $\sigma$-algebra for the underlying topology and let $\mathcal{M}(X)$ denote the collection of finite regular Borel measures on $X$. {Let $C_b(X)$ denote the space of all real-valued, bounded, continuous functions on $X$.} For $\nu \in \mathcal{M}(X)$ and $f \in C_b(X)$, $\nu(f)$ will denote the integral $\int_X f \, d\nu$. We will use the \textit{tight topology} on $\mathcal{M}(X)$ for which convergence $\nu_l \rightarrow \nu$ is defined by either of the following equivalent requirements (see \cite[$\S 5$, Prop.~$9$]{Bou04b} for the equivalence):
\begin{itemize}
    \item[(i)] for all $f \in C_b(X)$, $\nu_l(f) \rightarrow \nu(f)$,
    \item[(ii)] for any compactly supported continuous function $f: X\rightarrow \R$, $\nu_l(f) \rightarrow \nu(f)$ and $\nu_l(X) \rightarrow \nu(X)$.
\end{itemize}
While this convergence is not equivalent to weak$-^*$ convergence, the two notions coincide when all measures involved are probability measures.

\begin{thm}[See {\cite[Thms.~2.1 $\&$ 2.7]{Bil68}} or {\cite[Chap.~4]{Bou04a}}]
\label{muJM}
    {If} $\nu_l, \nu \in \mathcal{M}(X)$, then $\nu_l \rightarrow \nu$ tightly if and only if for any $\nu$-JM set $E$ one has $\nu_l(E) \rightarrow \nu(E)$.

    Moreover, if $Y$ is also a locally compact second countable space and $\psi: X \rightarrow Y$ is a measurable function, then the push-forward map $\psi_*: \mathcal{M}(X)\rightarrow \mathcal{M}(Y)$ is continuous at a measure $\mu \in \mathcal{M}(X)$ (with respect to the tight topology) provided that $\psi$ is continuous $\nu$-almost everywhere.
\end{thm}

\section{Convergence in Compact extension}
\label{sec: Convergence in Compact extension}
This section aims to prove the following theorem, which provides sufficient conditions under which equidistribution in a measurable factor of a measure-preserving ergodic system lifts to equidistribution in the original system. The key example to keep in mind is when \( Y = \XA_d \), \( K = K_f \), and \( H = \{ {a}_t : t \in \R^{k+r-1} \} \), so that \( K \backslash Y = \X_d \).

\begin{thm}
\label{thm: Generecity in compact extension}
    {Let $Y$ be a locally compact second countable space carrying a continuous action of a compact Lie group $K$. Let $\pi: Y \rightarrow K \backslash Y$ denote the natural projection map. Suppose $m_Y$ is a Radon measure on $Y$, preserved under the action of $K$.} Let $H$ be {a} semi-group acting ergodically on the measure space $(Y,m_Y)$ such that the action of $H$ commutes with the action of $K$. Then, a sequence $(\mu_{l})_l$ of probability measures on $Y$ which satisfies 
    \begin{enumerate}
        \item The weak limit of $\mu_l$ along any subsequence is $H$-invariant,
        \item $(\pi)_*(\mu_l) \rightarrow (\pi)_*(m_{Y})$ as $l \rightarrow \infty$,
    \end{enumerate}
    must converge to $m_Y$.
\end{thm}
\begin{proof}
    We will prove the theorem by showing that for any positive monotone real sequence $(l_i)_i$ diverging to $\infty$, there is a subsequence {$(l_{i_j})_j$}, such that $\mu_{l_{i_j}} \rightarrow m_Y$. To see this, note that by the Banach-Alaoglu theorem, the sequence of measures in $\mu_{l_{i}}$ has a convergent subsequence (with respect to the weak topology). Assume that the subsequence is parametrised by {$l_{i_j}$} and that the limit measure is $\mu$. Claim that
    \begin{align}
        \label{eq: temp3}
        \int_{K} (R_\gamma)_* \mu \, d \mu_{K}= m_Y,
    \end{align}
    where $\mu_{K}$ denotes the Haar probability measure on $K$ and $R_\gamma : Y \rightarrow Y$ denotes the left action of $\gamma \in K$.

    To see this, fix $f \in C_c(Y)$. Define $F : K \backslash Y \rightarrow \mathbb{C}$ as 
    $$
    F(Ky)= \int_{K} f( \gamma y) \, d\mu_{K}(\gamma).
    $$   
    Clearly $F$ is well defined and belongs to $C_c(K \backslash Y)$ satisfying $\int_{K \backslash Y} F \, d(\pi)_*(m_Y) = \int_{Y} f \, dm_Y.$
   
    {Now, we have 
    \begin{align*}
        &\int_{K} \int_{Y} f(x) \, d((R_\gamma)_* \mu) \, d\mu_{K}(\gamma) \\
        &= \int_{K} \left( \lim_{j \rightarrow \infty}  \int_{Y} f( \gamma y) \, d\mu_{l_{i_j}}(y) \right) \, d\mu(\gamma)  \\
        &= \lim_{j \rightarrow \infty}  \int_{Y} \int_{K} f( \gamma y)\, d\mu_{K}(\gamma)  \, d\mu_{l_{i_j}}(y)
        \text{ (by the Dominated Convergence Theorem and Fubini's Theorem) } \\
        &= \lim_{j \rightarrow \infty}  \int_{K \backslash Y}  F(Ky)  \, d(\pi)_*(\mu_{l_{i_j}})(Ky)   \ \ \text{ (by the definition of $F$)} \\
        &= \int_{K \backslash Y} F \, d(\pi)_*(m_Y) = \int_{Y}f \, dm_Y,
    \end{align*}}
    where in penultimate equality, we used the fact that $(\pi)_*(\mu_l) \rightarrow (\pi)_*(m_Y)$. Since this holds for all $f \in C_c(Y)$, the claim is verified.

{Note that $m_Y$ is an $H$-ergodic and $ (R_\gamma)_* \mu$ is $H$-invariant for all $\gamma \in K$. Thus by \eqref{eq: temp3} and ergodic decomposition theorem,} we get that for $\mu_{K}$ almost every $\gamma \in K$, we have $(R_\gamma)_* \mu = m_Y= (R_\gamma)_* m_Y$, which implies $\mu= m_Y$. This proves the theorem.
\end{proof}

\section{Effective Equidistribution under Multiparameter flows}
\label{Appendix: Measure Theory}
{This section is devoted to proving the following theorem, which provides sufficient conditions under which \(\nu\)-almost every point becomes equidistributed under a  multi-parameter diagonal flow in a larger ambient space. A central motivating example is when \(Y = \Mat([0,1])\), \(\nu\) is the restriction of Lebesgue measure to \(Y\), and
\[
F(y, t) = f\left(a_t \Lambda_y \Gamma\right),
\]
for some smooth function \(f\) on \(\X_d\). The theorem below then implies that for Lebesgue-almost every \(\theta\), we have
\[
\lim_{T \rightarrow \infty} \frac{1}{m_{\R^{k+r-1}}(J^T)} \int_{J^T} f(a_t \Lambda_\theta \Gamma) \, dt = \mu_{\X_d}(f),
\]
as stated in Theorem~\ref{thm: Effective Equidistribution}.}

\begin{thm}
\label{Pointwise Main Lemma}
{Let} $m_1, \ldots, m_k, n_1, \ldots, n_r$ be positive integers. Let \[\cR = \left\{ (t_1, \ldots, t_k, s_1, \ldots, s_r) \in (\R_{\geq 0})^{k+r} : \sum_{i=1}^k m_i t_i = \sum_{j=1}^r n_j s_j \right\}.\] Let \((Y,\nu)\) be a probability space, and suppose \(F: Y \times (\R_{\geq 0})^{k+r} \to \R\) is a bounded measurable function. Assume there {exist} constants \(\delta > 0\) and \(C > 0\) such that for all \(h_1, h_2 \in \cR\),
\begin{align}
\label{eq: con pointwise main lemma}
    \left| \int_Y F(x,h_1) F(x,h_2) \, d\nu(x) \right| \leq C \exp\left( - \delta \min \left\{ \lfloor h_1 \rfloor, \lfloor h_2 \rfloor, \|h_1 - h_2\|_\infty \right\} \right),
\end{align}
{ where, for \(h = (h_1, \ldots, h_{k+r}) \in \cR\), the quantity $\lfloor h \rfloor$ defined as in Theorem~\ref{KM1 Thm} equals
\begin{align}
    \label{eq: def lfloor}
    \lfloor h \rfloor := \min\{ |h_1|, \ldots, |h_{k+r}| \}.
\end{align}}
Then, for any \(\e > 0\), for \(\nu\)-almost every { \(y \in Y\), we have
\begin{align}
\label{eq: thm effective multi}
    \frac{1}{T^{k+r-1}} \int_{\J^T} F(y,h) \, dm_{\cR}(h) =
\begin{cases}
o\left(T^{-1} (\log T)^{\frac{3}{2}+r+\e}\right), & \text{if } k+r > 2, \\
o\left(T^{-1/2} (\log T)^{\frac{3}{2}+\e}\right), & \text{if } k+r = 2,
\end{cases}
\end{align}}
where 
$$
\J^T = \{(t_1, \ldots, t_k, s_1, \ldots, s_r) \in \cR:\  \forall \  i, \  s_i \leq T\}.
$$
\end{thm}
{ \begin{rem}
  Note that the set $\cR$ is contained in the affine hyperplane of $\R^d$ given by
\begin{align}
\label{eq: def plane last}
    \left\{ (t_1, \ldots, t_k, s_1, \ldots, s_r) \in \R^{k+r} : \sum_{i=1}^k m_i t_i = \sum_{j=1}^r n_j s_j \right\}.
\end{align}
Thus, $\cR$ carries a natural measure \( m_{\cR} \), defined up to a scaling factor, obtained by restricting the Lebesgue measure on the hyperplane~\eqref{eq: def plane last} to $\cR$. This measure satisfies the scaling property
\[
m_{\cR}(\J^T) = c \cdot T^{k+r-1},
\]
where the implied constant \( c \) depends on the choice of normalization for \( m_{\cR} \). This constant also affects the implied constant in~\eqref{eq: thm effective multi}.

For concreteness, throughout the proof we fix the measure on the hyperplane~\eqref{eq: def plane last} to be the pushforward of the Lebesgue measure \( m_{\R^{k+r-1}} \) under the map
\begin{align}
    \label{eq: nnn map def}
    (t_1, \ldots, t_{k+r-1}) \mapsto \left(t_1, \ldots, t_{k+r-1}, \frac{ \sum_{i=1}^k m_i t_i - \sum_{j=1}^{r-1} n_j s_j }{n_r} \right),
\end{align}
and define \( m_{\cR} \) to be its restriction to the subset \( \cR \).
\end{rem}}
\begin{rem}
    The theorem for $k+r=2$, i.e, $r=k=1$ is proved in \cite[Thm.~3.1]{KSW}. Hence, we will assume that $k+r>2$ for the proof. Philosophically, the arguments in this appendix can be traced back to \cite{KSW}, and in fact, earlier works of Cassels \cite{Cassels} and Schmidt \cite{Schmidt}.
\end{rem}

To prove Theorem~\ref{Pointwise Main Lemma}, we introduce the {following notation and auxiliary lemmas.} Define, for {\( \alpha = (\alpha_1, \ldots, \alpha_r), \beta = (\beta_1, \ldots, \beta_r) \in (\R_{\geq 0})^r \)} satisfying \( \alpha_i < \beta_i \) for all \( i \), 
\begin{align}
\label{def: J(a,b)}
    \J(\alpha,\beta) = \{(t_1, \ldots, t_k, s_1, \ldots, s_r) \in \cR : \forall i,\ \alpha_i \leq s_i \leq \beta_i \}.
\end{align}
Also, for \( c > 0 \), define
\begin{align}
    \label{def:J^+(a,b,c)}
    \J^+(\alpha,\beta,c) &= \{(h_1, \ldots, h_{k+r}) \in \J(\alpha,\beta) : \lfloor h \rfloor \geq c \}, \\
    \label{def:J^-(a,b,c)}
    \J^-(\alpha,\beta,c) &= \{(h_1, \ldots, h_{k+r}) \in \J(\alpha,\beta) : \lfloor h \rfloor \leq c \}.
\end{align}
{It is clear from definitions that 
\begin{align}
\label{eq: nn cap 0}
    m_{\cR}\left( \J^+(\alpha,\beta,c) \cap \J^-(\alpha,\beta,c)  \right) =0,
\end{align}
since the set $\J^+(\alpha,\beta,c) \cap \J^-(\alpha,\beta,c)$ is contained in union of finitely many $k+r-2$-dimensional hyperplanes in $\cR$.} \vspace{0.2in}

We begin with the following lemma.
\begin{lem}
\label{lem: L^2 estimate weak}
{ Let $c>0$ be given. Let \( \alpha = (\alpha_1, \ldots, \alpha_r), \beta = (\beta_1, \ldots, \beta_r) \in (\R_{\geq 0})^r \) be such that \( \alpha_i < \beta_i \) for all \( i \).} Then
\begin{align}
    \int_Y \left( \int_{\J(\alpha,\beta)} F(x,h)\, dm_{\cR}(h) \right)^2 d\nu(x)
    &\leq 2\, m_{\cR}(\J^-(\alpha,\beta,c))^2 \|F\|_\infty^2 + 2 (2c)^{k+r-1} m_{\cR}(\J(\alpha,\beta)) \|F\|_\infty^2 \nonumber \\
    &\quad + 2C\, m_{\cR}(\J(\alpha,\beta))^2 e^{-\delta c}. \label{eq:L^2 estimate weaker}
\end{align}
\end{lem}

\begin{proof}
{ We begin by decomposing the inner integral over \( \J(\alpha, \beta) \) as follows:
\begin{align*}
    &\int_Y \left( \int_{\J(\alpha,\beta)} F(x,h)\, dm_{\cR}(h) \right)^2 d\nu(x) \\
    &= \int_Y \left( \int_{\J^-(\alpha,\beta,c)} F(x,h)\, dm_{\cR}(h) + \int_{\J^+(\alpha,\beta,c)} F(x,h)\, dm_{\cR}(h) \right)^2 d\nu(x) \\
    &\leq 2 \int_Y \left( \int_{\J^-(\alpha,\beta,c)} F(x,h)\, dm_{\cR}(h) \right)^2 d\nu(x)
    + 2 \int_Y \left( \int_{\J^+(\alpha,\beta,c)} F(x,h)\, dm_{\cR}(h) \right)^2 d\nu(x),
\end{align*}
where we used the equation \eqref{eq: nn cap 0} and the inequality \( (a + b)^2 \leq 2a^2 + 2b^2 \).

For the first term, we estimate using the bound \( |F(x,h)| \leq \|F\|_\infty \):
\begin{align}
    \int_Y \left( \int_{\J^-(\alpha,\beta,c)} F(x,h)\, dm_{\cR}(h) \right)^2 d\nu(x)
    \leq \left( m_{\cR}(\J^-(\alpha,\beta,c)) \|F\|_\infty \right)^2. \label{eq: J- estimate}
\end{align}

We now estimate the second term:
\begin{align}
    \int_Y \left( \int_{\J^+(\alpha,\beta,c)} F(x,h)\, dm_{\cR}(h) \right)^2 d\nu(x)
    = \int_{\J^+(\alpha,\beta,c) \times \J^+(\alpha,\beta,c)} \left( \int_Y F(x,h_1) F(x,h_2)\, d\nu(x) \right) dm_{\cR}(h_1) dm_{\cR}(h_2). \label{eq: double integral}
\end{align}

We split this double integral based on whether \( \|h_1 - h_2\|_\infty < c \) or \( \|h_1 - h_2\|_\infty \geq c \).

\medskip
\noindent
\textit{Estimate for near-diagonal pairs.} We have
\begin{align}
    &\int_{\substack{h_1, h_2 \in \J^+(\alpha,\beta,c) \\ \|h_1 - h_2\|_\infty < c}} \left( \int_Y F(x,h_1) F(x,h_2)\, d\nu(x) \right) dm_{\cR}(h_1) dm_{\cR}(h_2) \nonumber \\
    &\leq \|F\|_\infty^2 \cdot m_{\cR}(\J^+(\alpha,\beta,c)) \cdot \sup_{h_1 \in \J^+(\alpha,\beta,c)} m_{\cR}\left( \left\{ h_2 \in \J^+(\alpha,\beta,c) : \|h_1 - h_2\|_\infty < c \right\} \right) \nonumber \\
    &\leq (2 c)^{k+r-1} m_{\cR}(\J(\alpha,\beta)) \|F\|_\infty^2. \label{eq: near-diagonal estimate}
\end{align}
The final bound follows from a geometric estimate: for each \( h_1 \in \J^+(\alpha,\beta,c) \), the preimage of the set of \( h_2 \) satisfying \( \|h_1 - h_2\|_\infty < c \) under the map \eqref{eq: nnn map def} lies in a box of volume \( (2c)^{k+r-1} \).

\medskip
\noindent
\textit{Estimate for off-diagonal pairs.} For \( \|h_1 - h_2\|_\infty \geq c \), we use the decay assumption \eqref{eq: con pointwise main lemma} to get
\begin{align}
    \int_{\substack{h_1, h_2 \in \J^+(\alpha,\beta,c) \\ \|h_1 - h_2\|_\infty \geq c}} \left( \int_Y F(x,h_1) F(x,h_2)\, d\nu(x) \right) dm_{\cR}(h_1) dm_{\cR}(h_2)
    \leq m_{\cR}(\J(\alpha,\beta))^2 C e^{-\delta c}. \label{eq: off-diagonal estimate}
\end{align}

\medskip
\noindent
Combining the bounds \eqref{eq: J- estimate}, \eqref{eq: near-diagonal estimate}, and \eqref{eq: off-diagonal estimate} completes the proof of the lemma.}
\end{proof}

For a positive integer $s$ we let $L_s$ be the set of intervals of form $[2^ij,2^i(j+1)]$, where $i,j$ are non-negative integers and $2^i(j+1) < 2^s.$

\begin{lem}
\label{lem: nnnn 1}
    For a positive integer $s$, we have that 
    $$
    \sum_{[\alpha_1,\beta_1] \in L_s} \cdots \sum_{[\alpha_r, \beta_r] \in L_s} \int_{Y}\left( \int_{\J((\alpha_1, \ldots, \alpha_r),(\beta_1, \ldots, \beta_r))} F(x,h) \, dm_{\cR}(h) \right)^2\, d\nu(x) \ll s^{r+2} 2^{2s(k+r-2)}.
    $$
\end{lem}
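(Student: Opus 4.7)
The plan is to apply Lemma \ref{lem: L^2 estimate weak} to each box $J(a,b) := J((a_1,b_1),\ldots,(a_r,b_r))$ with a common choice $c = Ks$, where $K > 2\log 2/\delta$ will be fixed, and then sum the three resulting terms over all $r$-tuples in $L_s^r$. Two preliminary estimates drive the argument: (i) parameterizing $\cR$ by $(s_1,\ldots,s_r)$, the fiber over $s$ in the $t$-variables is the $(k-1)$-dimensional simplex $\{t \geq 0 : \sum_i m_i t_i = \sum_j n_j s_j\}$, whose $(k-1)$-volume is $\asymp (\sum_j n_j s_j)^{k-1} \leq C \cdot 2^{s(k-1)}$, giving
\[
m_\cR(J(a,b)) \leq C \cdot 2^{s(k-1)} \prod_{l=1}^r (b_l - a_l);
\]
(ii) the dyadic identities $|L_s| \asymp 2^s$, $\sum_{[a,b] \in L_s}(b-a) \asymp s \cdot 2^s$, and $\sum_{[a,b] \in L_s}(b-a)^2 \asymp 2^{2s}$.

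Summing the third term gives $\sum m_\cR(J(a,b))^2 \leq C \cdot 2^{2s(k-1)} (2^{2s})^r = C \cdot 2^{2s(k+r-1)}$, so its contribution is $\leq C \cdot 2^{2s(k+r-1)} e^{-\delta Ks} \leq 2^{2s(k+r-2)}$ for $K \geq 2\log 2/\delta$. Summing the second term, $c^{r+k-1}\sum m_\cR(J(a,b)) \leq (Ks)^{r+k-1} \cdot C \cdot 2^{s(k-1)} (s \cdot 2^s)^r = O(s^{2r+k-1} 2^{s(k+r-1)})$; comparing to the target, the ratio $s^{r+k-3} 2^{s(3-k-r)}$ is bounded once $k+r \geq 3$, so this contribution is $O(s^{r+2} 2^{2s(k+r-2)})$. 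The first term requires the most care: bounding $M(a,b,c) \leq \sum_{l=1}^r m_{s,l} + \sum_{i=1}^k m_{t,i}$ by a union bound over the $k+r$ slabs where one coordinate is $\leq c$, and using $M^2 \leq (k+r)(\sum m_{s,l}^2 + \sum m_{t,i}^2)$, one estimates the $s_l$-slab via $m_{s,l} \leq (\min(b_l,c) - a_l)_+ \cdot C \cdot 2^{s(k-1)} \prod_{l' \neq l}(b_{l'} - a_{l'})$. The key dyadic identity $\sum_{[a,b] \in L_s,\, a \leq c} (\min(b,c) - a)^2 \asymp c^2 s$ then yields $\sum m_{s,l}^2 = O(c^2 s \cdot 2^{2s(k+r-2)})$. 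When $k \geq 2$, the slice $\{t_i \leq c\}$ of the simplex has $(k-1)$-volume $\leq C c \cdot S^{k-2}$, giving $\sum m_{t,i}^2 = O(c^2 \cdot 2^{2s(k+r-2)})$; when $k=1$, the condition $t_1 \leq c$ forces $s_1 \leq m_1 c/n_1$, reducing to an $s_1$-slab estimate of the same order $O(c^2 s \cdot 2^{2s(k+r-2)})$. With $c = Ks$, the total first-term sum is $O(s^3 \cdot 2^{2s(k+r-2)})$, bounded by $s^{r+2} 2^{2s(k+r-2)}$ since $r \geq 1$ (forced by $k+r \geq 3$ with $k \geq 1$).

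The main obstacle is the dyadic identity $\sum_{[a,b] \in L_s,\, a \leq c}(\min(b,c) - a)^2 \asymp c^2 s$ that produces the crucial $s^3$ factor. The argument requires splitting by scale: at scales $2^i \leq c$, the surviving intervals essentially tile $[0,c]$, contributing clipped length $\asymp c$ per scale across $\sim \log c$ scales; at scales $2^i > c$, only the interval $[0, 2^i]$ survives and contributes clipped length $c$ per scale across $\sim s - \log c$ scales. Summing the two regimes yields total clipped length $\asymp cs$, and the squared estimate then follows from $(\min(b,c) - a)^2 \leq c \cdot (\min(b,c) - a)$. Carefully propagating the simplex Jacobian $2^{s(k-1)}$ through every summation — and handling the degenerate $k=1$ case for the $t$-slabs by reducing to an $s$-slab — then completes the proof.
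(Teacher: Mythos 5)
Your proposal is correct, and its core mechanism is the same as the paper's: apply Lemma \ref{lem: L^2 estimate weak} to every dyadic box with a common threshold $c\asymp s$ and then sum the three resulting terms over $L_s^r$. The difference lies entirely in how that summation is organized. The paper groups boxes by scale tuple $(i_1,\dots,i_r)$: at a fixed scale the boxes are pairwise disjoint and contained in $J((0,\dots,0),(2^s,\dots,2^s))$, so by $\sum_j x_j^2\le(\sum_j x_j)^2$ each sum over translates is bounded by the corresponding quantity for the single full box, the only combinatorial loss being the factor $s^r$ from the number of scale tuples; the $J^-$ term is then handled by one slab estimate $m_{\cR}\bigl(J^-((0,\dots,0),(2^s,\dots,2^s),c)\bigr)\ll c\,2^{s(k+r-2)}$, giving $s^r\bigl(c^2\,2^{2s(k+r-2)}+c^{k+r-1}2^{s(k+r-1)}+2^{2s(k+r-1)}e^{-\delta c}\bigr)\ll s^{r+2}2^{2s(k+r-2)}$. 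You instead sum each term directly over all tuples using the factorized dyadic identities $\sum(b-a)\asymp s2^s$ and $\sum(b-a)^2\asymp 2^{2s}$, and for the $J^-$ term a per-box slab decomposition combined with the clipped identity $\sum_{a\le c}(\min(b,c)-a)^2\asymp c^2 s$, together with the separate treatment of the $t$-slabs and the degenerate $k=1$ case. This is more laborious but slightly sharper on the dominant $J^-$ contribution ($O(s^3\,2^{2s(k+r-2)})$ versus the paper's $O(s^{r+2}2^{2s(k+r-2)})$); in your version the stated exponent $s^{r+2}$ arises from the middle term in the borderline case $k+r=3$, and both routes yield the lemma.
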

\begin{proof}
    Using Lemma \ref{lem: L^2 estimate weak} with $c=  (\log (2^{s(k+r-1)}))/\delta $, we get that
    { \begin{align*}
        &\sum_{[\alpha_1, \beta_1] \in L_s} \cdots \sum_{[\alpha_r,\beta_r] \in L_s} \int_{Y}\left( \int_{\J((\alpha_1, \ldots, \alpha_r),(\beta_1, \ldots, \beta_r))} F(x,h) \, dm_{\cR}(h) \right)^2\, d\nu(x)  \\
        &\leq \sum_{[\alpha_1, \beta_1] \in L_s} \cdots \sum_{[\alpha_r,\beta_r] \in L_s} \left(2 m_{\cR}(\J^-((\alpha_1, \ldots, \alpha_r),(\beta_1, \ldots, \beta_r),c))^2 \|F\|_\infty^2 \right. \\
        & \left. + 2 (2c)^{k+r-1} m_{\cR}(\J((\alpha_1, \ldots, \alpha_r),(\beta_1, \ldots, \beta_r))) \|F\|_\infty^2 + 2C m_{\cR}(\J((\alpha_1, \ldots, \alpha_r),(\beta_1, \ldots, \beta_r)))^2  e^{-\delta c} \right) \\
        &= \sum_{i_1=0}^{s-1} \cdots \sum_{i_r=0}^{s-1} \sum_{j_1=0}^{2^{s-i_1}-1} \cdots \sum_{j_r=0}^{2^{s-i_r}-1} \left(2 m_{\cR}(\J^-((2^{i_1}j_1, \ldots, 2^{i_r}j_r),(2^{i_1}(j_1+1), \ldots, 2^{i_r}(j_r+1)),c))^2 \|F\|_\infty^2 \right.\\
        &+ 2(2c)^{k+r-1} m_{\cR}(\J((2^{i_1}j_1, \ldots, 2^{i_r}j_r),(2^{i_1}(j_1+1), \ldots, 2^{i_r}(j_r+1))) \|F\|_\infty^2 \\ 
        & \left. + 2C m_{\cR}(\J((2^{i_1}j_1, \ldots, 2^{i_r}j_r),(2^{i_1}(j_1+1), \ldots, 2^{i_r}(j_r+1))))^2  e^{-\delta c} \right) \\ 
        &\leq \sum_{i_1=0}^{s-1} \cdots \sum_{i_r=0}^{s-1} \left( 2 m_{\cR}(\J^-((0, \ldots, 0),(2^s, \ldots, 2^s),c))^2 \|F\|_\infty^2 \right. \\
         & \left. +  2 (2c)^{k+r-1} m_{\cR}(\J((0, \ldots, 0),(2^s, \ldots, 2^s))) \|F\|_\infty^2 + 2C m_{\cR}(\J((0, \ldots, 0),(2^s, \ldots, 2^s)))^2  e^{-\delta c} \right).
        \end{align*}}
       { Note that
        \begin{align*}
            m_{\cR}(\J^-((0, \ldots, 0),(2^s, \ldots, 2^s),c))^2 &\ll c^2 2^{2s(k+r-2)}  \ll  s^2 2^{2s(k+r-2)}, \\
            m_{\cR}(\J((0, \ldots, 0),(2^s, \ldots, 2^s))) &\ll 2^{s(k+r-1)}.
        \end{align*}
        Therefore we have
        \begin{align*}
            &\sum_{[\alpha_1, \beta_1] \in L_s} \cdots \sum_{[\alpha_r,\beta_r] \in L_s} \int_{Y}\left( \int_{\J((\alpha_1, \ldots, \alpha_r),(\beta_1, \ldots, \beta_r))} F(x,h) \, dm_{\cR}(h) \right)^2\, d\nu(x)  \\
            &\ll s^r \|F\|_\infty^2 \left( s^2 2^{2s(k+r-2)} + s^{k+r-1} 2^{s(k+r-1)} + 2^{s(k+r-1)} \right) \\
            &\ll s^{r+2} 2^{2s(k+r-2)} \text{ since } k+r>2. 
        \end{align*}
       This proves the lemma.}
\end{proof}

\begin{lem}
\label{lem: nnnn 2}
    Let $l,s$ be positive integers with $l< 2^s$. Then the interval $[0,l]$ can be covered by at most $s$ intervals in $L_s$.
\end{lem}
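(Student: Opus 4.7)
The plan is to build the cover explicitly from the binary expansion of $l$. Write
\[
l = 2^{i_1} + 2^{i_2} + \cdots + 2^{i_k}, \qquad i_1 > i_2 > \cdots > i_k \geq 0.
\]
Because $l < 2^s$ every exponent satisfies $i_j \leq s-1$, so the number of summands satisfies $k \leq s$. I will show that this expansion produces a cover of $[0,l]$ by exactly $k$ intervals of $L_s$.

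Introduce the partial sums $S_0 = 0$ and $S_m = 2^{i_1} + \cdots + 2^{i_m}$ for $1 \leq m \leq k$, and consider the intervals $I_m = [S_{m-1}, S_m]$. Clearly $\bigcup_{m=1}^{k} I_m = [0, S_k] = [0,l]$, so it only remains to verify $I_m \in L_s$ for each $m$. The length of $I_m$ is $S_m - S_{m-1} = 2^{i_m}$. Since $i_p > i_m$ for every $p < m$, the integer $2^{i_m}$ divides each term $2^{i_p}$ in $S_{m-1}$; writing $S_{m-1} = 2^{i_m} j_m$ we get
\[
I_m = [2^{i_m} j_m, \, 2^{i_m}(j_m+1)],
\]
and the right endpoint $2^{i_m}(j_m+1) = S_m \leq l < 2^s$ meets the defining condition of $L_s$. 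Hence $I_m \in L_s$.

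Thus $[0,l]$ is covered by the $k \leq s$ intervals $I_1, \ldots, I_k \in L_s$, finishing the proof. There is essentially no obstacle here: the whole content is the observation that the greedy/binary decomposition produces dyadically-aligned blocks, which is precisely what membership in $L_s$ requires.
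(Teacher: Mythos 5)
Your proof is correct and takes exactly the approach the paper indicates: the paper's proof is the one-line remark that the intervals "can be easily constructed using the binary expansion of $k$," and your argument is simply that construction carried out in full, with the dyadic-alignment check $S_{m-1}=2^{i_m}j_m$ and the verification $2^{i_m}(j_m+1)\leq l<2^s$ that membership in $L_s$ requires. Nothing is missing; you have just made explicit what the paper leaves to the reader.
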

\begin{proof}
    These intervals can be easily constructed using the binary expansion of $k$. 
\end{proof}
\begin{lem}
\label{lem: L^2 Sets}
    For every $\e > 0$, there exists a sequence of measurable subsets $\{Y_s\}_{s \in \N}$ of $Y$ such that 
    \begin{enumerate}
        \item $\nu(Y_s) \ll  s^{-(1+2\e)} $ 
        \item For every positive integer { $l< 2^s$ and every $y \notin Y_s$, one has $$\left| \int_{\J^l} F(y,h) dm_{\cR}(h) \right| \leq 2^{s(k+r-2)} s^{ \tfrac{3}{2} + r+\e} .$$}
   \end{enumerate}
\end{lem}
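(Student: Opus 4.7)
The plan is to apply a standard Chebyshev/square-function argument to the $L^2$ estimate from the previous lemma, and then use Cauchy--Schwarz to pass from dyadic rectangles to arbitrary sub-boxes $J(0,k)$. The novelty of the setup is only notational: with $r$ coordinates we decompose products of dyadic intervals.

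First I would introduce the square function
\begin{equation*}
G_s(y) := \sum_{[a_1,b_1] \in L_s} \cdots \sum_{[a_r,b_r] \in L_s} \left( \int_{J((a_1,\ldots,a_r),(b_1,\ldots,b_r))} F(y,t) \, dt \right)^2.
\end{equation*}
The preceding lemma gives $\int_Y G_s \, d\nu \ll s^{r+2} 2^{2s(k+r-2)}$. Then I would define the exceptional set
\begin{equation*}
Y_s := \left\{ y \in Y : G_s(y) > C \, s^{\,r+3+2\varepsilon} \, 2^{2s(k+r-2)} \right\}
\end{equation*}
for a suitable constant $C = C(\varepsilon)$. By Markov's inequality, $\nu(Y_s) \leq C^{-1} s^{-(1+2\varepsilon)}$, which is the first assertion.

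The second assertion is obtained by the dyadic decomposition supplied by the previous lemma of the excerpt: for any integer $k < 2^s$, the interval $[0,k]$ is covered by at most $s$ intervals in $L_s$, hence $J(0,k)$ (read as $J((0,\ldots,0),(k,\ldots,k))$) is a disjoint union of at most $s^r$ pieces $J((a_1,\ldots,a_r),(b_1,\ldots,b_r))$ with each $[a_i,b_i] \in L_s$. For $y \notin Y_s$, writing $\int_{J(0,k)} F$ as the sum of integrals over these pieces and applying Cauchy--Schwarz (with $s^r$ terms),
\begin{equation*}
\left| \int_{J(0,k)} F(y,t) \, dt \right|^2 \leq s^r \cdot G_s(y) \leq C \, s^{\,2r+3+2\varepsilon} \, 2^{2s(k+r-2)},
\end{equation*}
and taking square roots yields the bound $s^{\,r+3/2+\varepsilon} 2^{s(k+r-2)}$ as required.

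There is no genuine obstacle: the only delicate bookkeeping is to verify that the $s^{r/2}$ lost in Cauchy--Schwarz combines correctly with the $s^{(r+3)/2 + \varepsilon}$ coming from the Chebyshev threshold to produce exactly $s^{r+3/2+\varepsilon}$, and that the threshold is chosen so that $\nu(Y_s)$ is summable to the power $s^{-(1+2\varepsilon)}$ (which is the rate needed later in the Borel--Cantelli step of the proof of Theorem \ref{Pointwise Main Lemma}). The argument is essentially the multi-dimensional analogue of the one used in \cite{KSW} for the one-parameter case.
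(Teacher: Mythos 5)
Your proposal is correct and follows essentially the same route as the paper: the exceptional set $Y_s$ is defined by a Markov/Chebyshev threshold of size $s^{r+3+2\e}2^{2s(k+r-2)}$ on the dyadic square function controlled by the preceding $L^2$ lemma, and the pointwise bound off $Y_s$ comes from covering $[0,k]$ by at most $s$ intervals of $L_s$ and applying Cauchy--Schwarz over the resulting $s^r$ pieces $J((a_1,\ldots,a_r),(b_1,\ldots,b_r))$. The paper simply takes your constant $C=1$, which makes the second bullet hold exactly while the first bullet is still $\ll s^{-(1+2\e)}$.
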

\begin{proof}
{ Define
$$Y_s = \left\{ y \in Y: \sum_{[\alpha_1,\beta_1] \in L_s} \cdots \sum_{[\alpha_r, \beta_r] \in L_s} \int_{Y}\left( \int_{\J((\alpha_1, \ldots, \alpha_r),(\beta_1, \ldots, \beta_r))} F(x,h) \, dm_{\cR}(h) \right)^2\, d\nu(x) > s^{r+3+ 2\e} 2^{2s(k+r-2)} \right\}.$$

Assertion (1) follows from Lemma~\ref{lem: L^2 estimate weak} and Markov’s inequality.

For (2), fix \( y \notin Y_s \). By Lemma~\ref{lem: nnnn 2}, there exists a subset \( L(l) \subset L_s \) with \( \# L(l) \leq s \) such that
\[
[0, l] = \bigcup_{[\alpha, \beta] \in L(l)} [\alpha, \beta],
\]
and therefore,
\begin{align*}
    \left( \int_{\J^l} F(y,h)\, dm_{\cR}(h) \right)^2
    &= \left( \int_{\J((0, \ldots, 0), (l, \ldots, l))} F(y,h)\, dm_{\cR}(h) \right)^2 \\
    &= \left( \sum_{[\alpha_1, \beta_1] \in L(l)} \cdots \sum_{[\alpha_r, \beta_r] \in L(l)} \int_{\J((\alpha_1, \ldots, \alpha_r), (\beta_1, \ldots, \beta_r))} F(y,h)\, dm_{\cR}(h) \right)^2.
\end{align*}

Using the inequality
\[
\left( \sum_{i=1}^p z_i \right)^2 \leq p \left( \sum_{i=1}^p z_i^2 \right),
\]
with \( p = \# L(l)^r \leq s^r \), we obtain
\begin{align*}
    &\left( \sum_{[\alpha_1, \beta_1] \in L(l)} \cdots \sum_{[\alpha_r, \beta_r] \in L(l)} \int_{\J((\alpha_1, \ldots, \alpha_r), (\beta_1, \ldots, \beta_r))} F(y,h)\, dm_{\cR}(h) \right)^2 \\
    &\leq s^r \sum_{[\alpha_1, \beta_1] \in L(l)} \cdots \sum_{[\alpha_r, \beta_r] \in L(l)} \left( \int_{\J((\alpha_1, \ldots, \alpha_r), (\beta_1, \ldots, \beta_r))} F(y,h)\, dm_{\cR}(h) \right)^2.
\end{align*}

Since \( L(l) \subset L_s \), we further bound this by
\[
s^r \sum_{[\alpha_1, \beta_1] \in L_s} \cdots \sum_{[\alpha_r, \beta_r] \in L_s} \left( \int_{\J((\alpha_1, \ldots, \alpha_r), (\beta_1, \ldots, \beta_r))} F(y,h)\, dm_{\cR}(h) \right)^2.
\]

By definition of \( Y_s \), the right-hand side is bounded for \( y \notin Y_s \) by
\[
 s^r \cdot s^{r+3+2\varepsilon} \cdot 2^{2s(k+r-2)} = 2^{2s(k+r-2)} \cdot s^{2r + 3 + 2\varepsilon}.
\]

Taking square roots gives
\[
\left| \int_{\J^l} F(y,h)\, dm_{\cR}(h) \right| \leq 2^{s(k+r-2)} \cdot s^{\tfrac{3}{2} + r + \varepsilon},
\]
which proves assertion (2). Hence, the lemma follows. }
\end{proof}

\begin{proof}[Proof of Theorem \ref{Pointwise Main Lemma}]
   { We fix $\e > 0$ and choose a sequence of measurable subsets $\{Y_s\}_{s \in \N}$ as given by Lemma~\ref{lem: L^2 Sets}.  
Since
\[
\sum_{s=1}^{\infty} \nu(Y_s) \ll \sum_{s=1}^{\infty} s^{-(1+2\e)} < \infty,
\]
the Borel--Cantelli lemma implies that there exists a measurable subset $Y(\e) \subset Y$ of full measure such that, for every $y \in Y(\e)$, there exists $s_y \in \N$ with the property that $y \notin Y_s$ whenever $s \geq s_y$.

We claim that for every $y \in Y(\e)$ one has
\[
\frac{1}{T^{k+r-1}} \left| \int_{\J^T} F(y,h) \, dm_{\cR}(h) \right| \ll T^{-1} (\log T)^{\frac{3}{2} + r + \e},
\]
provided that $T$ is sufficiently large. Here the implied constant depends only on $F$ and on the integers $n_1, \ldots, n_r, m_1, \ldots, m_k$.

To prove the claim, fix $y \in Y(\e)$ and take {$T \geq 2^{s_y}$.  
Let $l$ be the greatest integer less than or equal to $T$, and let $s \in \N$ be the unique integer such that
\[
2^{s-1} \leq l < 2^{s}.
\]
Then
\[
2^{s-1} \leq l \leq T < l+1 \leq 2^{s}.
\]
Since $T \geq 2^{s_y }$, we have $s \geq s_y$,} and therefore $y \notin Y_s$.  
By Lemma~\ref{lem: L^2 Sets}(2), it follows that
\begin{align*}
\left| \int_{\J^T} F(y,h) \, dm_{\cR}(h) \right|
&\ll T^{k+r-2} \|F\|_\infty + \left| \int_{\J^l} F(y,h) \, dm_{\cR}(h) \right| \\
&\ll T^{k+r-2} + 2^{s(k+r-2)} s^{\frac{3}{2} + r + \e} \\
&\ll T^{k+r-2} + T^{k+r-2} (\log T)^{\frac{3}{2} + r + \e}.
\end{align*}
Dividing through by $T^{k+r-1}$ yields the desired bound, proving the claim.

Since the above bound holds for every $y \in \bigcap_{i \in \N} Y(1/i)$, a set of full measure, and $\e>0$ is arbitrary, the estimate in \eqref{eq: thm effective multi} follows. This completes the proof.}
\end{proof}

\section{Measure of $J^T$}
\label{sec: measure of JT}

{In this section, we compute the Lebesgue measure of the set $J^T$ defined in \eqref{eq: def J^T}. To do so, we begin with a preliminary calculation involving a standard simplex.}

{\subsection*{Volume of a Truncated Simplex}}

{For any $a > 0$, define the set
\[
\widetilde{\Delta}_l(a) = \left\{ (x_1, \ldots, x_l) \in (\R_{\geq 0})^l : x_1 + \cdots + x_l \leq a \right\}.
\]
We compute its Lebesgue measure using Fubini's theorem. The integration proceeds iteratively as follows:
\begin{align*}
    m_{\R^l}(\widetilde{\Delta}_l(a)) &= \int_0^a \int_0^{a - x_1} \cdots \int_0^{a - (x_1 + \cdots + x_{l-1})} dx_l \cdots dx_1 \\
    &\quad \vdots \\
    &= \int_0^a \int_0^{a - x_1} \cdots \int_0^{a - (x_1 + \cdots + x_{l-i-1})} \frac{(a - x_1 - \cdots - x_{l-i-1})^i}{i!} \, dx_{l-i} \cdots dx_1 \\
    &\quad \vdots \\
    &= \frac{a^l}{l!}.
\end{align*}}

{\subsection*{Computing the Measure of $J^T$}}

{We now evaluate the volume of the set
\[
J^T = \{(t_1, \ldots, t_{k}, s_1, \ldots, s_{r-1}) \in (\R_{\geq 0})^{k+r-1} : \substack{ s_i \leq T \text{ for all } i \\ 0 \leq m_1t_1 + \cdots+ m_kt_k- n_1s_1 -\cdots- n_{r-1}s_{r-1} \leq n_rT} \}.
\]
We begin by applying a linear change of variables in the $t_i$ coordinates to normalise the coefficients $m_1, \ldots, m_k$:
\begin{align*}
    &m_{\R^{k+r-1}}(J^T) \\
    &= \frac{1}{m_1 \cdots m_k} \cdot m_{\R^{k+r-1}} \Bigg( \left\{(t_1, \ldots, t_{k}, s_1, \ldots, s_{r-1}) \in (\R_{\geq 0})^{k+r-1} : \substack{ s_i \leq T \text{ for all } i \\ 0 \leq t_1 + \cdots+ t_k- n_1s_1 -\cdots- n_{r-1}s_{r-1} \leq n_rT} \right\} \Bigg) \\
    &= \frac{1}{m_1 \cdots m_k} \int_{[0, T]^{r-1}} m_{\R^k} \left( \widetilde{\Delta}_k\left(n_1 s_1 + \cdots + n_{r-1} s_{r-1} + n_r T \right) \setminus \widetilde{\Delta}_k\left(n_1 s_1 + \cdots + n_{r-1} s_{r-1} \right) \right) ds_1 \cdots ds_{r-1}.
\end{align*}
Using the earlier computation for $\widetilde{\Delta}_k(a)$, we obtain:
\begin{align*}
    m_{\R^{k+r-1}}(J^T) &= \frac{1}{m_1 \cdots m_k} \int_{[0, T]^{r-1}} \frac{ \left(n_1 s_1 + \cdots + n_{r-1} s_{r-1} + n_r T \right)^k - \left(n_1 s_1 + \cdots + n_{r-1} s_{r-1} \right)^k }{k!} \, ds_1 \cdots ds_{r-1}\\
    &= \frac{n_r}{m_1 \cdots m_k} \int_{[0, T]^{r}} \frac{ \left(n_1 s_1 + \cdots + n_{r} s_{r} \right)^{k-1} }{(k-1)!} \, ds_1 \cdots ds_{r}.
\end{align*}
Now, make the substitution $s_i \mapsto T s_i$ for each $i$ and use homogeneity to factor out powers of $T$:
\begin{align*}
    m_{\R^{k+r-1}}(J^T) &= \frac{T^{k + r - 1} \cdot n_r}{m_1 \cdots m_k} \int_{[0, 1]^r} \frac{(n_1 s_1 + \cdots + n_{r-1} s_{r-1} + n_r s_r)^{k-1}}{(k-1)!} \, ds_1 \cdots ds_r.
\end{align*}
Evaluating the final integral, we obtain
\[
m_{\R^{k+r-1}}(J^T) = \frac{T^{k+r-1} \cdot n_r}{m_1 \cdots m_k } \left( \frac{1}{n_1 \cdots n_r} \cdot \frac{ c_{k+r-1}(n_1, \ldots, n_r)}{(k+r-1)!} \right) = \frac{T^{k+r-1} \cdot c_{k+r-1}(n_1, \ldots, n_r)}{m_1 \cdots n_{r-1} \cdot (k+r-1)!}.
\]}

\bibliography{Biblio}

\section*{Table of Notations}
\label{Table of notations}
\newcommand{\shorttoprule}{\specialrule{\heavyrulewidth}{0pt}{0pt}}
{\renewcommand{\arraystretch}{1.4} 
\begin{longtable}{@{}>{\raggedright\arraybackslash}p{0.21\textwidth} p{0.79\textwidth}@{}}
\shorttoprule
\textbf{Notation} & \textbf{Meaning / Description} \\
\midrule
\endfirsthead
\toprule
\textbf{Notation} & \textbf{Meaning / Description} \\
\midrule
\endhead
\bottomrule
\endfoot

$\Z$ &
The set of integers. \\

$\N$  &
The set of all positive integers. \\

$\Z_{\geq 0}$ &
The set of all non-negative integers. \\

$\R$ &
The set of all real numbers.\\

$\e$ & Fixed positive real number, used throughout the paper. \\

$m, n$ &  Fixed positive integers, used throughout the paper. \\

$d$ & The sum $m + n$, fixed throughout the paper. \\

$m_1, \ldots, m_k$ & Fixed positive integers such that $m_1 + \cdots + m_k = m$; used throughout the paper. \\

$n_1, \ldots, n_r$ & Fixed positive integers such that $n_1 + \cdots + n_r = n$; used throughout the paper. \\ 

$\|\cdot\|$ & With slight abuse of notation, denotes a fixed norm on each of $\R^{m_1}, \ldots, \R^{m_k}$ and $\R^{n_1}, \ldots, \R^{n_r}$, as well as the corresponding derived norms on $\R^m$, $\R^n$, and $\R^d$ defined via \eqref{eq: def norm 1}, \eqref{eq: def norm 2}, and \eqref{eq: def norm 3}. \\

$\varrho_1$, $\varrho_2$ & Projection maps from $\R^d = \R^m \times \R^n$ onto $\R^m$ and $\R^n$, respectively.\\

$\rho_i$ & For $1 \leq i \leq k$, the projection map from $\R^m = \R^{m_1} \times \cdots \times \R^{m_k}$ onto its $i$-th component $\R^{m_i}$. \\

$\rho_j'$ & For $1 \leq j \leq r$, the projection map from $\R^n = \R^{n_1} \times \cdots \times \R^{n_r}$ onto its $j$-th component $\R^{n_j}$. \\

$\Sphere^l$ & The unit sphere in $\R^{l}$ with respect to the chosen norm $\|\cdot\|$, i.e.\ 
$$
\Sphere^l = \{\,x\in\R^{l} : \|x\| = 1\}.
$$
\\

$B_\delta^{l}$ & Denotes the ball of radius $\delta$ around origin in $\R^l$ under the chosen norm, i.e., 
\[
B_\delta^{l} := \{ x \in \R^l : \|x\| \leq \delta \}.
\] \\

$\R_{>0}$ &
The set of all positive real numbers. \\

$\R_{\geq 0}$&
The set of all non-negative real numbers. \\

$\Primes$ &
The set of all prime numbers.
\\

$\Q_p$ & The field of \(p\)-adic numbers: the completion of \(\mathbb{Q}\) with respect to the \(p\)-adic norm \(|\cdot|_p\).
\\

$\Z_p$ & The ring of \(p\)-adic integers: the unit ball in \(\mathbb{Q}_p\), i.e., \(\{x \in \mathbb{Q}_p : |x|_p \leq 1\}\).
\\

$\A_f$ & The ring of finite adeles of \(\mathbb{Q}\): equals \(\prod_{p \in \Primes}' \mathbb{Q}_p\), where \(\prod'\) denotes the restricted product. That is, a sequence \(\underline{\beta} = (\beta_2, \beta_3, \ldots, \beta_p, \ldots)\) belongs to \(\mathbb{A}_f\) if and only if \(\beta_p \in \mathbb{Z}_p\) for all but finitely many primes \(p\). \\

$\A$ & The ring of adeles of $\Q$: $\A= \R \times \A_f$.\\

$\pi_f$ & Natural projection map on adeles:  
$\pi_f: \A \to \A_f$, \; $(g_\infty,g_f)\mapsto g_f$ \\

$\hZ$ & The profinite completion of $\Z$: $\hZ = \prod_{p \in \Primes} \Z_p$, the product of $p$-adic integers over all primes $p$.
\\

$\Lambda_\prim$ & The set of all primitive vectors in $\Lambda \subset \R^d$: that is, vectors $v \in \Lambda \setminus \{0\}$ such that if $v = n w$ for some $w \in \Lambda$ and $n \in \Z$, then $n = \pm1$. \\

$\hZp^d$ & The closure in $\hZ^d$ of the image of the set of primitive vectors $\Z_\prim^d$ under the natural inclusion $\Z^d \hookrightarrow \hZ^d$. \\

$\M_{i \times j}(I)$ & The set of all $i \times j$ matrices with entries in the set $I$. \\

$\GL$ & The general linear group of invertible real matrices. \\

$\SL$ & The special linear group: matrices in $\GL$ with determinant $1$. \\

$A_{\cdot j}$ & For a $d \times d$ matrix $A$ and $1 \leq j \leq d$, the $j$-th column of $A$. \\

$\bfe_j$ & The $j$-th standard basis vector in $\R^d$ (or $\Z^d$), i.e., the vector with $1$ in the $j$-th position and $0$ in all other positions.\\

$J^T$ & The set of parameters $(t_1, \dots, t_k,\, s_1, \dots, s_{r-1}) \in (\R_{\geq 0})^k \times [0,T]^{r-1}$ satisfying a linear constraint depending on $T$; see Equation~\eqref{eq: def J^T}. \\

$c_{k+r-1}(n_1, \ldots, n_r)$ & Alternating sum defined in \eqref{eq: def c k r 1}, related to the volume of $J^T$; see also \eqref{eq: measure of J T}. \\

$\#A$ & The cardinality of the set $A$, i.e., the number of elements in $A$. \\

$A^\circ$ or $\Int_X(A)$ & The interior of the set $A$ in $X$. \\

$\cl_X(A)$ & The closure of the set $A$ in $X$. \\

$\partial(A)$, $\partial_X(A)$ & For \( A \subset X \), this denotes the boundary of \( A \) in \( X \), i.e., \( \partial(A) = \overline{A} \setminus A^\circ \). \\

$\ind_A$ & The characteristic function of the set $A$, defined as
{\renewcommand{\arraystretch}{1}$$
\ind_A(x) =
\begin{cases}
1 & \text{if } x \in A, \\
0 & \text{if } x \notin A.
\end{cases}
$$} \\

$f|_A$ & The restriction of a function $f : X \to Y$ to a subset $A \subset X$, defined by $f|_A(x) = f(x)$ for all $x \in A$. \\

$\mathbf{1}$ & The constant function with value $1$, i.e., for all $x$, we have $\mathbf{1}(x) = 1$. \\

$C_b(X)$ & The space of bounded continuous functions on $X$. \\
$C_c(X)$ & The space of continuous functions on $X$ with compact support. \\
$C_c^\infty(X)$ & The space of infinitely differentiable functions on $X$ with compact support. \\

$\|\cdot\|_\infty$ & For a vector $v \in \R^\ell$, denotes the supremum of the absolute values of its coordinates; for a real-valued function $f$, denotes the supremum of $|f|$. \\

$\delta_x$ & The Dirac measure at point $x$, i.e., the probability measure on a measurable space assigning mass $1$ to $\{x\}$ and $0$ to its complement. \\

$\nu$-JM & For a finite Radon measure $\nu$ on a topological space $X$ and a Borel measurable set $E \subset X$, we say that $E$ is \textit{Jordan measurable with respect to $\nu$} (abbreviated $\nu$-JM) if $\nu(\partial_X(E)) = 0$. \\

$\mu|_A$ & For a measure $\mu$ on $X$ and a measurable subset $A\subset X$, the restricted measure $\mu|_A$ is defined by $\mu|_A(B)=\mu(A\cap B)$ for every measurable $B\subset X$. \\

$f_*\mu$ & The pushforward of the measure $\mu$ on $X$ under a measurable map $f: X \to Y$, defined by $(f_*\mu)(B) := \mu(f^{-1}(B))$ for all Borel sets $B \subset Y$. \\

$m_{\R^l}$ & The Lebesgue measure on $\R^l$. \\

$\mu^{(\Sphere^l)}$ & The pushforward of the Lebesgue measure $m_{\R^l}|_{\{x \in \R^l : \|x\| \leq 1\}}$ under the projection map $x \mapsto x/\|x\|$. Note: $\mu^{(\Sphere^l)}$ is not necessarily a probability measure. \\

$m_{\hat\Z^d_{\mathrm{prim}}}$ & $\SL_d(\hat\Z)$-invariant probability measure supported on $\hat\Z^d_{\mathrm{prim}}$, uniquely defined since $\hat\Z^d_{\mathrm{prim}}$ is an orbit of the natural $\SL_d(\hat\Z)$-action on $\hat\Z^d$. \\

$\zeta$ & The Riemann zeta function, defined for $\Re(s) > 1$ by $\zeta(s) = \sum_{n=1}^\infty \frac{1}{n^s}$, and extended to other values via analytic continuation. \\

$K_f$ & The compact group $\prod_{p \in \Primes } \SL_d(\Z_p)$.\\
$G$ & The group $\SL_d(\R)$.\\

$H$ & The group $ \left\{\begin{pmatrix}
        A & 0 \\ x & 1 
    \end{pmatrix}: A \in \SL_{n}(\R), x \in \R^n \right\}$ \\

$\Gamma$ & The discrete subgroup $\SL_d(\Z)$ of $G$. \\

$\X_d$ & The homogeneous space $G/\Gamma$, identified with the space of all unimodular lattices in $\R^d$ via the map $A\Gamma \mapsto A\Z^d$. \\

$\XA_d$ & The quotient space $\SL_d(\A)/\SL_d(\Q)$. \\

$\pi$ & The natural projection $\pi: \XA_d  \rightarrow \X_d$, see Section~\ref{ sec: Adelic Setup} for the detailed definition.\\

$\mu_{\X_d}$ & Denotes the unique $G$-invariant probability measure on $\X_d$ \\
$\mu_{\XA_d}$ & Denotes the unique $\SL_d(\A)$-invariant probability measure on $\XA_d$. \\

$\X_d(W, l)$ & For a subset $W \subset \R^d$ and integer $l \geq 1$, this denotes the set of lattices $\Lambda$ such that $\# (\Lambda_{\prim} \cap W) \geq l$. \\

$\X_d(W)$ & Shorthand for $\X_d(W, 1)$, i.e., lattices $\Lambda$ with at least one primitive point in $W$. \\

$\X_d^\sharp(W)$ & The set $\X_d(W)$ excluding those lattices with two or more primitive points in $W$, i.e., $\X_d^\sharp(W) = \X_d(W) \setminus \X_d(W, 2)$. \\

$\vector$ & Function from $\X_d^\sharp(W)$ to $W$ assigning to each $\Lambda$ the unique primitive point in $\Lambda \cap W$; i.e., $\{\vector(\Lambda)\} = \Lambda_{\prim} \cap W$. The same symbol $\vector$ is used for all sets $W$ by abuse of notation. \\

$\E_d^j$ & The space of unimodular lattices in $\R^d$ that contain $\bfe_j$ as a primitive vector, for $j = 1, \ldots, d$. \\

$m_{\E_d^j}$ & For each $1 \leq j \leq d$, the unique probability measure on $\E_d^j$ that is invariant under the group $\{h \in \SL_d(\R) : h \cdot \bfe_j = \bfe_j\}$; uniqueness follows since $\E_d^j$ can be identified with a homogeneous space of this group. \\

\((\E_d, m_{\E_d})\) & Shorthand notation for \((\E_d^d, m_{\E_d^d})\). \\

$\ZZ_j$ & The space $\E_d^j \times \Sphere^{m_1} \times \cdots \times \Sphere^{m_k} \times \Sphere^{n_1} \times \cdots \times \Sphere^{n_r} \times [0,\e] \times \hZp^d$, for each $1 \leq j \leq d$.\\

$\tmu^j$ & The natural measure on $\ZZ_j$, for each $1 \leq j \leq d$, defined as the product of the canonical measures on its components: 
$m_{\E_d^j} \times \mu^{(\Sphere^{m_1})} \times \cdots \times \mu^{(\Sphere^{m_k})} \times \mu^{(\Sphere^{n_1})} \times \cdots \times \mu^{(\Sphere^{n_r})} \times m_{\R}|_{(0,\e)} \times  m_{\hZp^d}$. \\

$\mu^j$ & The probability measure on $\ZZ_j$, for each $1 \leq j \leq d$, obtained by normalising $\tmu^j$ \\

$\disp(\cdot)$ & A function from $\M_{m \times n}(\R) \times \Z^m \times \Z^n $ to $\R$, defined by
$$\disp(\theta, p, q) = \left( \prod_{i=1}^k \|\rho_i(p + \theta q)\|^{m_i} \right) \cdot \left( \prod_{j=1}^r \|\rho_j'(q)\|^{n_j} \right).$$ \\

$\proj(\cdot)$ & A function from $\M_{m \times n}(\R) \times \Z^m \times \Z^n \setminus \disp^{-1}(0)$ to $\Sphere^{m_1} \times \cdots \times \Sphere^{m_k} \times \Sphere^{n_1} \times \cdots \times \Sphere^{n_r}$, defined by
$$\proj(\theta, p, q) = \left( \frac{\rho_1(p + \theta q)}{\|\rho_1(p + \theta q)\|}, \ldots, \frac{\rho_k(p + \theta q)}{\|\rho_k(p + \theta q)\|}, \frac{\rho_1'(q)}{\|\rho_1'(q)\|}, \ldots, \frac{\rho_r'(q)}{\|\rho_r'(q)\|} \right).$$ \\

$\lambda_j$ & For each $1 \leq j \leq d$, a map from $\left(\Mat \times (\Z^m \times \Z^n)_\prim\right) \setminus \left(\disp^{-1}(0) \cup \left\{(\theta, p, q) : (p+\theta q, q)_j = 0\right\} \right)$ to $\E^j_d$. See Section~\ref{ sec: The Relative Size} for the precise definition. \\

\(\Theta_j\) & For each $1 \leq j \leq d$, the map from $\left(\Mat \times (\Z^m \times \Z^n)_\prim\right) \setminus \left(\disp^{-1}(0) \cup \left\{(\theta, p, q) : (p+\theta q, q)_j = 0\right\} \right)$ to $\ZZ_j$ defined by  
\[
\Theta_j(\theta, p, q) := \left( \lambda_j(\theta, p, q),\, \proj(\theta, p, q),\, \disp(\theta, p, q),\, (p, q) \right).
\]
\\

$I_l$ & The $l \times l$ identity matrix.\\

${a}_{t}$ & Block-diagonal matrix in $\SL_d(\R)$ with $t \in \R^{k+r-1}$; see equation~\eqref{defatt} for the precise definition \\

$({a}_t, \mu_{\X_d})$-generic  & See Definition~\ref{def: generic flow} \\
 $({a}_t, \mu_{\XA_d})$-generic  & See Definition~\ref{def: generic flow} \\

$\Lambda_\theta$ & For $\theta \in \Mat$, the point in $\X_d$ defined in \eqref{eq:def lambda theta}  \\
$\widetilde{\Lambda}_\theta$ &  For $\theta \in \Mat$, the point in $\XA_d$ defined in \eqref{eq: def tilde theta}  \\

$\Le$ & The set defined in \eqref{eq: def Le} \\
$\Le^+$ & The set defined in \eqref{eq:def Lej} \\

$\sed$ & The set $\X_d(\Le)$. \\

$\seda$ & The preimage of $\sed$ under the projection map $\pi_\A$, i.e., $\seda = \pi_\A^{-1}(\sed)$. \\

$T_{\e}(x)$ & For $x \in \X_d$, the set $\{t \in \R^{k+r-1} : a_t x \in \sed\}$. \\
$\Tilde{T}_{\e}(x)$ & For $x \in \XA_d$, the set $\{t \in \R^{k+r-1} : a_t x \in \seda\}$. \\

$E^I$ & For $E \subset \X_d$ or $E \subset \XA_d$ and $I \subset \R^{k+r-1}$, the set $$E^I := \{a_tx : t \in I,\, x \in E\}.$$ \\

$I_\tau$ & The box $[0, \tau]^{k + r - 1} \subset \R^{k + r - 1}$. \\

$\mu_{\sed}$ & Measure on $\sed$ defined by $$\mu_{\sed}(E) = \sup_{\tau > 0} \left\{ \frac{1}{\tau^{k+r-1}} \mu_{\X_d}(E^{I_\tau}) \right\}$$ for measurable $E \subset \sed$; see Section~\ref{sec: The Cross-section: Measure on the cross-section} for details. \\

$\mu_{\seda}$ & The unique $K_f$-invariant measure on $\seda$ such that $(\pi_\A)_* \mu_{\seda} = \mu_{\sed}$. \\

$(\sed)_{\geq \delta}$ & For any $\delta > 0$, the set
$\{ x \in \sed : \min\{ \|t\|_{\el} : t \in T_\e(x) \setminus \{(0,0)\} \} \geq 2\delta \}$, consisting of points whose first return to $\sed$ under the $a_t$-action is at least $2\delta$ away. \\

$(\sed)_{< \delta}$ & The complement of $(\sed)_{\geq \delta}$ in $\sed$, i.e., $(\sed)_{< \delta} = \sed \setminus (\sed)_{\geq \delta}$. \\

$(\seda)_{\geq \delta}$ & The set $\pi_\A^{-1}((\sed)_{\geq \delta})$. \\

$(\seda)_{< \delta}$ & The set $\pi_\A^{-1}((\sed)_{< \delta})$. \\

$\ssed$ & The set $\X_d^\sharp(\Le^+)$. \\

$\sseda$ & The set $\pi_\A^{-1}(\X_d^\sharp(\Le^+))$. \\

$\ued$ & The set $\ssed \cap \X_d(\Le^\circ)$. \\

$\ueda$ & The set $\pi_\A^{-1}(\ued)$. \\

$ \cL_\e$ & The set $\{y \in \R^{n_r} : y_{n_r} > 0, \ \|y\|^{n_r} \leq \e\}$. \\

$\varphi$ & Map from $\E_d \times \Sphere^{m_1} \times \cdots \times \Sphere^{n_{r-1}} \times \cL_\e$ to $\sed$, defined in~\eqref{eq:def varphi genral}. \\

$u(\cdot)$ & Map from $\Sphere^{m_1}\times\cdots\times\Sphere^{n_{r-1}} \times \R^{n_r-1} \times \R_{+}$ to $\SL_d(\R)$, defined in~\eqref{eq: def u general}. \\

$ \tilde \psi, \tilde \psi^-$ & Maps from $\sseda$ to $\ZZ_d$, defined in~\eqref{defpsi 1} and~\eqref{defpsi 2}. \\

$N(x,T,\beta,E)$ & For a Borel subset $E$ of $\seda$, $\beta \in \R$, $x \in \XA_d$, and $T>0$, defined in~\eqref{def:N(x,T,E)} \\

$\Y_T(\theta)$ & For $\theta \in \Mat$ and $T>0$, defined in Lemma~\ref{Number ineq}  \\

$D^\sharp(T,\theta)$ & Number of $\e$-approximates $(p,q)$ of $\theta \in \Mat$ satisfying $\|q\| \leq e^T$, where $T>0$ \\

$\tau_A$ & For a subset $A \subset \seda$ with $\mu_{\seda}(A) > 0$, the return time function $\tau_A: A \to \R_{>0}$ defined by $\tau_A(x) = \min\{t \in \R_{>0} : a_t x \in A\}$; see Section~\ref{sec: Time visits} for more details \\

$T_A$ & For a subset $A \subset \seda$ with $\mu_{\seda}(A) > 0$, the first return map $T_A: A \to A$ defined by $T_A(x) = a_{\tau_A(x)} x$, defined $\mu_{\seda}|_A$-almost everywhere; see Section~\ref{sec: Time visits} for more details \\

\end{longtable}}

\end{document}